\numberwithin{equation}{section}
\newtheorem{satz}{Satz}
\newtheorem{proposition}[satz]{Proposition}
\newtheorem{theorem}{Theorem}
\newtheorem*{theorem*}{Theorem}
\newtheorem{definition}[satz]{Definition}
\newtheorem{lemma}[satz]{Lemma}
\newtheorem{corollary}[satz]{Corollary}
\theoremstyle{definition}
\newtheorem*{remark*}{Remark}
\newtheorem{remark}[satz]{Remark}
\newtheorem*{notation}{Notation}
\newtheorem{claim}[satz]{Claim}
\newtheorem*{example*}{Examples}
\newtheorem{assumption}{Assumption}
\newcommand{\tensor}{\otimes}
\newcommand{\map}[1]{\stackrel{#1}{\longrightarrow}}
\newcommand{\incl}[1]{\stackrel{#1}{\hookrightarrow}}
\newcommand{\mat}[4]{
 \left(  \begin{matrix} #1 & #2 \\ #3 & #4 \end{matrix} \right)}
\newcommand{\dmat}[9]{\left( \begin{matrix} #1 & #2 & #3 \\ #4 & #5 & #6\\ #7 & #8& #9 \end{matrix}\right)}
\def\kbar{{\overline{k}}}
\def\GL{\textrm{GL}}
\def\SL{\textsf{SL}}
\def\SU{\textsf{SU}}
\DeclareMathOperator{\Pic}{Pic}
\DeclareMathOperator{\Bun}{Bun}
\DeclareMathOperator{\Gr}{Gr}
\DeclareMathOperator{\GR}{GR}
\DeclareMathOperator{\Hom}{Hom}
\DeclareMathOperator{\Isom}{Isom}
\DeclareMathOperator{\Spec}{Spec}
\DeclareMathOperator{\Aut}{Aut}
\DeclareMathOperator{\Sym}{Sym}
\DeclareMathOperator{\Lie}{Lie}
\DeclareMathOperator{\Gal}{Gal}
\DeclareMathOperator{\Ram}{Ram}
\DeclareMathOperator{\im}{Im}
\DeclareMathOperator{\sep}{sep}
\DeclareMathOperator{\Ad}{Ad}
\DeclareMathOperator{\Res}{Res}
\def\pr{\mathit{pr}}
\def\univ{\textrm{\tiny univ}}
\def\fin{\textrm{\tiny fin}}
\def\1halb{\frac{1}{2}}
\def\tto{\twoheadrightarrow}
\def\pprime{{\prime\prime}}
\def\sxymat{\xymatrix@C=1.5ex@R=0.8ex}
\def\grp{$\xymatrix{ R\times_{X}R  \ar[r]^-{\mu} & R \ar@<1ex>[r]^-{s}\ar@<-1ex>[r]_-{t} & X}$}
\def\dar{\ar@<-0.5ex>[r]\ar@<0.5ex>[r]}
\def\tar{\ar[r]\ar@<1ex>[r]\ar@<-1ex>[r]}
\newcommand{\dmap}[2]{\ar@<-0.5ex>[r]_-{#2}\ar@<0.5ex>[r]^-{#1}}
\newcommand{\dotarrow}[2]{\xymatrix{{#1}\ar@{..>}[r]&{#2}}}
\def\cart{\ar@{}[dr]|{\square}}
\def\cA{\mathcal{A}}
\def\cB{\mathcal{B}}
\def\cE{\mathcal{E}}
\def\cG{\mathcal{G}}
\def\cH{\mathcal{H}}
\def\cI{\mathcal{I}}
\def\cL{\ensuremath{\mathcal{L}}}
\def\cO{\mathcal{O}}
\def\cP{\mathcal{P}}
\def\cQ{\mathcal{Q}}
\def\cT{\mathcal{T}}
\def\cU{\mathcal{U}}
\def\cX{\mathcal{X}}
\def\cZ{\mathcal{Z}}
\def\cm{\mathfrak{m}}
\def\bA{{\mathbb A}}
\def\bG{{\mathbb G}}
\def\bP{{\mathbb P}}
\def\bN{{\mathbb N}}
\def\bZ{{\mathbb Z}}
\def\bR{{\mathbf R}}
\def\Ox{\widehat{\cO_x}}
\begin{document}
\title[Uniformization of $\cG$-bundles]{Uniformization of $\cG$-bundles}
\author{Jochen Heinloth}
\address{University of Amsterdam, Korteweg-de Vries Institute for Mathematics, Science Park 904, 1098 XH Amsterdam, The Netherlands}
\email{J.Heinloth@uva.nl}

\begin{abstract}
We show some of the conjectures of Pappas and Rapoport concerning the moduli stack $\Bun_\cG$ of $\cG$-torsors on a curve $C$, where $\cG$ is a semisimple Bruhat-Tits group scheme on $C$. In particular we prove the analog of the uniformization theorem of Drinfeld-Simpson in this setting. Furthermore we apply this to compute the connected components of these moduli stacks and to calculate the Picard group of $\Bun_\cG$ in case $\cG$ is simply connected.
\end{abstract}
\maketitle

The uniformization of the moduli stack of principal bundles on a smooth projective curve $C$ by the affine Gra\ss mannian proved by Drinfeld and Simpson \cite{Drinfeld-Simpson} has been proven to be a very useful tool (\cite{BeauvilleLaszlo},\cite{LaszloSorger},\cite{BeilinsonDrinfeld}). More recently the moduli spaces of torsors under non-constant group schemes over a smooth projective curve have been considered, in particular in the case of unitary groups. Motivated by their work on twisted flag manifolds \cite{Pappas-Rapoport} Pappas and Rapoport conjectured \cite{Pappas-Rapoport-Buendel} that these moduli spaces should have a similar uniformization by twisted affine flag varieties. Furthermore, they made conjectures on the geometry of the moduli stack of torsors under such group schemes which generalize those results on the moduli of principal bundles which have been proven using the affine Gra\ss mannian.

As a first step towards these conjectures we want to explain a generalization of the approach of Drinfeld and Simpson \cite{Drinfeld-Simpson} to this situation.

To state our results we need to introduce some notation:
We fix a smooth projective curve $C$ over a field $k$. Let $\cG$ be a smooth affine group scheme over $C$ satisfying the following conditions:
\begin{enumerate}
\item All geometric fibres of $\cG$ are connected.
\item The generic fiber of $\cG$ is semisimple.
\item Let $\Ram(\cG)\subset C$ be the finite set of points $x\in C$ such that the fiber $\cG_x$ is not semisimple and denote by $\Ox$ the complete local ring at $x$. Then $\cG_{\Ox}$ is a parahoric group scheme over $\Spec \Ox$ as defined by Bruhat-Tits (\cite{BT2}, D\'efinition 5.2.6).
\end{enumerate}
We will call such a group scheme a {\em (parahoric) Bruhat-Tits group scheme} over $C$. We will denote by $\Bun_\cG$ the moduli stack of $\cG$-torsors on $C$.

To motivate the study of these group schemes, let us recall the basic examples, which also provide alternative interpretations of some better known moduli spaces:
\begin{example*}
\begin{enumerate}
 \item The standard examples are constant group schemes: For a given semisimple group $G$ over $k$ we may consider the group scheme  $\cG:=G\times C$. In this case $\Bun_\cG$ is just the space of $G$-bundles on $C$. Similarly any $G$-torsor $\cP$ on $C$ defines a group scheme $\Aut_G(\cP/C)$ over $C$, which again is a Bruhat-Tits group scheme. 
 \item Moduli spaces of parabolic bundles are also of the form $\Bun_\cG$:  Again one starts with a semisimple group $G$ over $k$, together with a finite set of points $x_i\in C$ and choices of parabolic subgroups $P_i\subset G$. In this situation Bruhat-Tits construct a group scheme $\cG$ over $C$ together with a map $\cG \to G\times C$ such that the image of $\cG_{x_i}$ in $G=G\times x_i$ is the subgroup $P_i$ and the $\cO_{C,x_i}$-valued points $\cG(\cO_{C,x_i})$ are given by the $\cO_{C,x_i}$-valued points of $G\times C$ which reduce to elements of $P_i$ modulo the maximal ideal of $\cO_{C,x_i}$ i.e., the corresponding parahoric subgroup in $G(\cO_{C,x_i})$.
(The construction of Bruhat-Tits is most often phrased over a complete valuation ring. However, for the present example \cite{BT2} Th\'eor\`eme 3.8.1 can be applied, if one notices that the extensions of the root groups $U_a$ are defined over $C$.)
 \item More interesting examples are obtained by taking Weil restrictions and invariants: If $\pi:\tilde{C}\to C$ is a generically \'etale covering of $C$ and $\cG_0$ is a group scheme over $\tilde{C}$ of the types described above then the Weil restriction $\text{Res}_{\tilde{C}/C}\cG_0$ -- i.e., the group scheme whose sheaf of sections is given by the sheaf push-forward $\pi_*\cG_0$ -- is again a parahoric group scheme\footnote{Let us sketch how to see this: Pick a split maximal torus $\cT_0\subset \cG_0$. The main point is to note that adjunction gives a canonical isomorphism of cocharacter groups $X_*(\pi_* \cT_0)\cong X_*(\cT_0)$. Also the question is local, so we can restrict to $\Spec(\widehat{\cO}_{C,x})$. There, one can check explicitly that the Weil restriction of the root groups $U_a$ are the extensions needed for $\pi_*(\cG_0)$.}. Such group schemes are called induced group schemes.

Moreover, if $\pi:\tilde{C} \to C$ is a tamely ramified Galois-covering with group $\Gamma$, acting on a semisimple simply connected group $G$ then one can take invariants $\text{Res}_{\tilde{C}/C} (G\times \tilde{C})^{\Gamma}$ to obtain another example of Bruhat-Tits group schemes. In particular if $\Gamma=\bZ/2\bZ$ and $G=\SL_n$ then one can obtain the quasi-split unitary group $\SU_{\tilde{C}/C}(n)$ in this way. Here $\Bun_\cG$ classifies vector bundles on $\tilde{C}$ equipped with a hermitian form.
\item The construction of the previous point is interesting, even for tori (these groups are not semisimple). If one starts with the trivial torus $\bG_m$ on $\tilde{C}$ then finds groups $\cT$ the torsors under which are parametrized by Prym-varieties. 
\end{enumerate}
\end{example*}

Our main theorem is the confirmation of the uniformization conjecture of Pappas and Rapoport, which holds over every ground field $k$:
\begin{theorem}\label{Uniformisierung}
Fix a closed point $x\in C$. Let $S$ be a noetherian scheme and $\cP\in \Bun_\cG(S)$ a $\cG$-torsor on $C\times S$. Then there exists a faithfully flat covering $S^\prime \to S$ such that $\cP|_{(C-x)\times S^\prime}$ is trivial.
\end{theorem}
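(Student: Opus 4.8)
The plan is to reduce the statement to the case where $S$ is the spectrum of an algebraically closed field and then to carry out, in that setting, the argument of Drinfeld--Simpson.

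\emph{Reduction to a field.} By a limit argument we may assume $S$ is of finite type over the prime field. It then suffices to produce, for the generic point $\eta$ of each irreducible component of $S$, a trivialization of $\cP$ over $(C-x)\times\Spec\overline{\kappa(\eta)}$: such a trivialization is already defined over a finite subextension of $\overline{\kappa(\eta)}$, hence spreads out to an fppf cover of a dense open $V\subseteq S$, and one concludes by noetherian induction on the reduced complement $S-V$. So we may take $S=\Spec\overline K$ with $\overline K\supseteq k$ algebraically closed; write $C'=(C-x)_{\overline K}$, a smooth affine curve over $\overline K$ over which the hypotheses on $\cG$ persist. One may moreover assume $\cG$ simply connected: for a central isogeny $\tilde\cG\to\cG$ with diagonalizable kernel $\mu$ one has $H^2_{\mathrm{fppf}}(C',\mu)=0$ --- because $\overline K$ is algebraically closed and the point $x$ has been removed, so that $\Pic(C')$ is divisible --- hence every $\cG$-torsor on $C'$ lifts to a $\tilde\cG$-torsor, and $\tilde\cG$ is again a Bruhat--Tits group scheme. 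We are reduced to showing: every $\cG$-torsor $\cP$ on $C'$ is trivial, $\cG$ simply connected.

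\emph{Reduction to a Borel and killing the unipotent part.} By Tsen's theorem $\overline K(C)$ is a $C_1$-field, so $\cG_{\overline K(C)}$ and all its inner forms are quasi-split; hence $\cP$ is reducible to a Borel over the generic point of $C$, and therefore over some dense open $U\subseteq C'$. Using the Bruhat--Tits hypotheses one fixes a smooth affine subgroup scheme $\cB\subseteq\cG|_{C'}$ with connected fibres whose generic fibre is a Borel of $\cG_{\overline K(C)}$ and which at the points of $\Ram(\cG)$ is an Iwahori group scheme, chosen so that $\cG/\cB\to C'$ is proper. The $\cB$-reduction over $U$ is a section of the proper $C'$-scheme $\cP\times^\cG(\cG/\cB)$ over $U$, and since $C'$ is a smooth curve over a field the valuative criterion extends it across the finitely many points of $C'\setminus U$; so $\cP$ acquires a reduction $\cP_\cB$ to $\cB$ over all of $C'$. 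Let $1\to\cU\to\cB\to\cT\to1$ be the extension by the unipotent radical $\cU$, with torus quotient $\cT$. Since $\cU$ is filtered by extensions of copies of $\bG_a$ and $H^1(C',\bG_a)=H^1(C',\cO)=0$ ($C'$ being affine), every $\cU$-torsor on $C'$ is trivial; hence $\cP_\cB$ is induced from the associated $\cT$-torsor $\cQ$, and $\cP\cong\cQ\times^\cT\cG$.

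\emph{The torus step --- the main obstacle.} It remains to show that $\cQ\times^\cT\cG$ is trivial on $C'=C-x$, and I expect this to be the crux. Following Drinfeld--Simpson, one still has the freedom to change the $\cB$-reduction $\cP_\cB$ and to apply Hecke modifications at $x$, neither of which alters the isomorphism class of $\cP$ on $C-x$ while both change $\cQ$. Using the rank-one subgroups of $\cG$ attached to the simple roots one modifies $\cP_\cB$ so as to strictly decrease a numerical measure of its instability; when this invariant is minimal the resulting $\cT$-bundle is so anti-dominant that $\cQ\times^\cT\cG$ is forced to be trivial on $C-x$ --- for $\cG=\SL_2$ this is precisely the fact that a rank-two bundle with trivial determinant on a smooth affine curve is trivial. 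Turning this into a proof is the technical heart: one must pin down the right instability invariant for the non-constant group scheme $\cG$, prove the modifications terminate, and --- most delicately --- control what happens at the points of $\Ram(\cG)$, where $\cG$ is only parahoric and the relevant modifications live in a twisted affine flag variety; this is exactly the place where the Bruhat--Tits/Pappas--Rapoport structure of $\cG$ must be exploited. (Alternatively one can run the argument directly in families as in Drinfeld--Simpson, but the reduction to a field isolates the essential content in the last step.)
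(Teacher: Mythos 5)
Your reduction to the case of a geometric point does not prove the theorem as stated, and it skips what is in fact the paper's main new ingredient. Triviality over the geometric generic points of $S$ spreads out to an fppf cover of a dense open $V\subseteq S$, but the cover of $(S-V)_{\mathrm{red}}$ supplied by noetherian induction is not flat over $S$, so the covering you assemble is surjective but not faithfully flat. More seriously, fibrewise triviality does not by itself give fppf-local triviality of the family: the functor of trivializations of $\cP$ over $(C-x)\times T$ is formally smooth over $S$ but is only an ind-scheme, not of finite presentation, and formal smoothness plus surjectivity on geometric points of such an object does not produce local sections. Supplying exactly this openness is the point of the paper's argument: Lemma \ref{Tangentialraum_hochheben} (the Pappas--Rapoport ``key observation'') lifts any finite-dimensional space of tangent vectors of $\Gr_{\cG,x}$ through a map $\bA^n\to L\cG_x$, which yields finite-type smooth charts $U_\cP\subset\bA^n\to\Bun_\cG$ factoring through $\Gr_{\cG,x}$ (Corollaries \ref{Glatte_Umgebungen} and \ref{Bild offen}); these charts are what convert the pointwise statement into the statement for families. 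Nothing in your proposal plays this role.

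The field case itself also contains steps that fail or are left open. First, you cannot choose $\cB\subset\cG|_{C-x}$ with generic fibre a Borel so that $\cG/\cB\to C-x$ is proper: at a point $y\in\Ram(\cG)\cap(C-x)$ the fibre $\cG_y$ is not reductive, any complete homogeneous space of $\cG_y$ is a flag variety of its reductive quotient and so has dimension strictly smaller than $\dim\cG_\eta-\dim\cB_\eta$, whereas for a flat model $\cB$ the fibre of $\cG/\cB$ at $y$ has exactly that dimension; the paper points out precisely this failure, and its Lemma \ref{Punktweise Reduktion auf Borel} instead obtains the section over $\Ram(\cG)$ from the uniformization theorem itself, needing properness only over $C-\Ram(\cG)$. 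So the valuative-criterion extension of the generic $\cB$-reduction breaks down at the remaining parahoric points. Second, the reduction to simply connected groups is not a one-line $H^2$-vanishing: the kernel of $\tilde{\cG}\to\cG$ need not be finite flat, let alone diagonalizable, at ramified points (see the Remark following Claim \ref{Zfin}), one must first construct $\tilde{\cG}$ as a Bruhat--Tits group scheme over $C-x$ at all, and controlling the lifting obstruction is what Section 5 of the paper is devoted to (via $\cZ^{\fin}$, Lemma \ref{Zfin_lemma} and the surjectivity of the Kottwitz homomorphism). Third, the ``torus step'', which you yourself identify as the crux, is only sketched, and in the twisted parahoric setting the behaviour at $\Ram(\cG)$ is exactly where the work lies. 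Note finally that the paper deliberately avoids the Drinfeld--Simpson route through Borel reductions: it shows that $\Gr_{\cG,x}\to\Bun_\cG$ has open image, uses twists by $\Aut_\cG(\cP)$ to see that gluing at different points yields the same image, invokes Steinberg/Borel--Springer only for generic triviality, and treats the non-simply-connected case by obstruction theory rather than by cohomology vanishing over $C-x$.
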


Some of the conjectures concerning the geometry of $\Bun_\cG$ can be deduced from the above result. To formulate these let us denote the generic point of $C$ by $\eta:=\Spec(k(C))$ and by $\overline{\eta}:=\Spec(k(C)^{\sep})$ the point of a separable closure of $k(C)$.
\begin{theorem}\label{Zusammenhangskomponenten}
If $\cG_\eta$ is simply connected, then $\Bun_\cG$ is connected. 

For general $\cG$ we have:
$$\pi_0(\Bun_\cG) \cong \pi_1(\cG_{\overline{\eta}})_{\Gal(k(C)^{\sep}/k(C))}.$$
\end{theorem}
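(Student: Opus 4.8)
The plan is to bootstrap from Theorem~\ref{Uniformisierung} together with the known structure of the twisted affine flag variety. We may assume $k$ algebraically closed, the general statement following by Galois descent (connected components of $\Bun_\cG$ over $k$ are the $\Gal(\kbar/k)$-orbits of those over $\kbar$, and the right hand side is compatible with this). Fix a closed point $x\in C$, write $G:=\cG_\eta$ and $\pi_1:=\pi_1(\cG_{\overline{\eta}})$ with its action of $\Gamma:=\Gal(k(C)^{\sep}/k(C))$, and let $I_x\subset\Gamma$ be an inertia subgroup at $x$. Let $\Gr_{\cG,x}$ be the ind-scheme of $\cG$-torsors on $C$ equipped with a trivialization over $C-x$; by the computation of connected components of twisted affine flag varieties (Pappas--Rapoport, \cite{Pappas-Rapoport}) one has $\pi_0(\Gr_{\cG,x})\cong(\pi_1)_{I_x}$. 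By Theorem~\ref{Uniformisierung} the forgetful morphism $\Gr_{\cG,x}\to\Bun_\cG$ is surjective on $\kbar$-points, so it induces a surjection
\[
\phi_x\colon (\pi_1)_{I_x}=\pi_0(\Gr_{\cG,x})\twoheadrightarrow\pi_0(\Bun_\cG).
\]
If $G$ is simply connected then $\pi_1=1$, so $\Gr_{\cG,x}$ is connected; as $\Bun_\cG$ is an algebraic stack locally of finite type all of whose $\kbar$-points lie in the connected image of $\Gr_{\cG,x}$, it is connected, which is the first assertion.

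For the second assertion I would construct a discrete invariant refining $\phi_x$, following the strategy of Drinfeld--Simpson \cite{Drinfeld-Simpson}. Using the reduction-of-structure-group theorem in this setting — the companion of Theorem~\ref{Uniformisierung} — every $\cG$-torsor on $C\times S$ admits, after an fppf base change on $S$, a reduction to the Iwahori group scheme $\cI\subset\cG$ (a Borel subgroup scheme over $C-\Ram(\cG)$, an Iwahori subgroup scheme at the points of $\Ram(\cG)$), hence to its torus quotient $\cT:=\cI/\cR_u(\cI)$. To the resulting $\cT$-torsor we attach its class in $\pi_0(\Bun_\cT)$, which one identifies with $X_*(\cT_{\overline{\eta}})_\Gamma$, and reduce it modulo the classes of the coroots to obtain
\[
c(\cP)\in\bigl(X_*(\cT_{\overline{\eta}})/\langle\text{coroots}\rangle\bigr)_\Gamma=(\pi_1)_\Gamma ;
\]
independence of the chosen reduction is as in \cite{Drinfeld-Simpson}, since two reductions agree generically up to an element of the affine Weyl group, which moves the degree by a sum of coroots. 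Being locally constant in families, $c$ descends to $\pi_0(\Bun_\cG)\to(\pi_1)_\Gamma$, and by construction $c\circ\phi_x$ is the natural projection $(\pi_1)_{I_x}\to(\pi_1)_\Gamma$ (the Kottwitz homomorphism computing $\pi_0(\Gr_{\cG,x})$ being given by the same valuation-of-the-cocharacter recipe); in particular $c$ is surjective.

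It remains to see that $c$ is injective. Let $\cP,\cP'$ be $\cG$-torsors with $c(\cP)=c(\cP')$; pick $\cT$-reductions as above. Then $\deg(\cP_\cT)-\deg(\cP'_\cT)\in X_*(\cT_{\overline{\eta}})$ maps to $0$ in $(\pi_1)_\Gamma$, hence is a sum of coroots plus an element $\sum_i(g_i-1)\lambda_i$. Performing Hecke modifications of $\cP$ along the rank-one subgroups of $\cG$ attached to the simple coroots, at auxiliary points — each such modification moves $\cP$ through a connected family, since the rank-one affine Grassmannians involved are connected — we may arrange $\deg(\cP_\cT)-\deg(\cP'_\cT)=\sum_i(g_i-1)\lambda_i$ exactly; but then $\cP_\cT$ and $\cP'_\cT$ lie in the same connected component of $\Bun_\cT$, because $\pi_0(\Bun_\cT)=X_*(\cT_{\overline{\eta}})_\Gamma$ and $\sum_i(g_i-1)\lambda_i$ vanishes there. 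Pushing a path in $\Bun_\cT$ joining them forward along $\cT\hookrightarrow\cG$ joins $\cP$ and $\cP'$ in $\Bun_\cG$. Thus $c$ is bijective and $\phi_x$ factors through $(\pi_1)_\Gamma$.

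The step I expect to be the main obstacle is the construction and well-definedness of $c$: one needs the Iwahori-refinement of the Drinfeld--Simpson reduction available in the present generality, the identification $\pi_0(\Bun_\cT)\cong X_*(\cT_{\overline{\eta}})_\Gamma$ for the genuinely non-constant torus $\cT$ (whose model at the ramified points is forced by the parahoric structure), and the verification that changing the $\cI$-reduction only shifts the degree by a coroot — an argument which in the parahoric case runs through the affine Weyl group and requires care at the points of $\Ram(\cG)$. A secondary issue is the input $\pi_0(\Gr_{\cG,x})\cong(\pi_1)_{I_x}$, which must be available (or proved) in this generality, together with the routine bookkeeping relating components over $k$ and over $\kbar$.
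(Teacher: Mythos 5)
Your first paragraph is exactly the paper's argument for the simply connected case (connectedness of $\Gr_{\cG,x}$ from \cite{Pappas-Rapoport} plus surjectivity of $\Gr_{\cG,x}\to\Bun_\cG$), so that part is fine. For general $\cG$, however, your route is genuinely different from the paper's, and as written it has a real gap. The paper deliberately avoids Borel/Iwahori reductions here: it passes to the simply connected cover $\tilde{\cG}\to\cG$, shows that the obstruction to lifting a family of $\cG$-torsors to $\tilde{\cG}$-torsors is a locally constant class in $\overline{H^2(C,\cZ^{\fin})}$ (Lemma \ref{Zfin_lemma}), identifies $\overline{H^2(C,\cZ^{\fin})}\cong\pi_1(\cG_{\overline{\eta}})_{\Gal}$ by embedding $\cZ^{\fin}$ into an induced torus and using the Kottwitz homomorphism (Lemmas \ref{Z_in_induziertem_torus}, \ref{pi0_Tori}), and shows each obstruction stratum $\Bun_\cG^d$ is nonempty and connected by the twisting argument inside the proof of Theorem \ref{DS} (with an alternative surjectivity argument via $\pi_0(\GR_\cG|_U)$). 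Your invariant $c$, by contrast, is built from reductions of families to an Iwahori/Borel group scheme, Drinfeld--Simpson style. In this paper such reductions (Corollary \ref{Reduktion_auf_Borel}, resting on Lemma \ref{ClosureBorel}) are only established when $\cG_{k(C)}$ splits over a \emph{tamely ramified} extension, whereas Theorem \ref{Zusammenhangskomponenten} carries no tameness hypothesis; the introduction stresses that avoiding this reduction is the main point of the method. So your proof, even if completed, would only cover the tame case, and the steps you yourself flag as ``the main obstacle'' (existence of the Iwahori refinement in families, well-definedness of the degree modulo coroots under change of reduction) are precisely the content that is missing, not routine bookkeeping.

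There is also a concrete slip in your injectivity argument: connecting $\cP_\cT$ and $\cP'_\cT$ by a path in $\Bun_\cT$ does not connect $\cP$ and $\cP'$ in $\Bun_\cG$, since $\cP$ is induced from a $\cB$- (or $\cI$-) torsor and not from its torus quotient. The standard repair is to work on $\Bun_\cB$: one needs that $\Bun_\cB\to\Bun_\cG$ is surjective (again the reduction theorem) and that $\Bun_\cB\to\Bun_\cT$ has connected fibres, so that $\pi_0(\Bun_\cB)\cong\pi_0(\Bun_\cT)$, and then one kills the coroot directions; your Hecke-modification step ``we may arrange the difference exactly'' also needs justification at the auxiliary points (connectedness of the relevant rank-one affine Grassmannians in the twisted setting). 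The inputs you cite that are unproblematic are indeed available: $\pi_0(\Gr_{\cG,x})\cong\pi_1(\cG_{\overline{\eta}})_{I_x}$ is in \cite{Pappas-Rapoport}, and $\pi_0(\Bun_\cT)\cong X_*(\cT_{\overline{\eta}})_{\Gal}$ is Lemma \ref{pi0_Tori}(1) of the paper. If you want a complete proof in the stated generality, the paper's obstruction-theoretic construction of the invariant is the way to go; your approach buys a more geometric, Drinfeld--Simpson-flavoured picture, but only after the tameness restriction and the reduction theorem are accepted.
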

For curves over finite fields, Behrend and Dhillon showed (\cite{BehrendDhillon} Theorem 3.3 and 3.5) that under some additional hypothesis on $\cG$ the above theorem would follow, if one could prove that the Tamagawa number of $\cG$ equals the number of elements of $\pi_1(\cG_{\overline{\eta}})_{\text{Gal}({k(C)}^{\textrm{sep}}/k(C))}$. 

\begin{theorem}\label{PicBunG}
Assume that $k$ is an algebraically closed field and that $\cG_{k(C)}$ is semisimple, absolutely simple and splits over a tamely ramified extension. For any $x\in \Ram(\cG)$ denote by $X^*(\cG_x):=\Hom(\cG_x,\bG_m)$ the character group of the fibre of $\cG$ over $x$. Then there is an exact sequence:
$$ 0 \to \prod_{x\in \Ram(\cG)} X^*(\cG_x) \to \Pic(\Bun_\cG) \to \bZ \to 0,$$
where the right arrow can be computed as a multiple of the central charge homomorphism at any given point $x\in C$.
\end{theorem}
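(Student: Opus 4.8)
The plan is to feed Theorem~\ref{Uniformisierung} into a descent computation of $\Pic$, in the style of the computations of Laszlo--Sorger and Faltings for constant groups. Fix a closed point $x\in C\setminus\Ram(\cG)$; this is possible since $\Ram(\cG)$ is finite and $k$ is algebraically closed. Over $\Ox$ the group $\cG$ is a split reductive group scheme -- its special fibre $\cG_x$ is semisimple, hence split, and $\Ox$ is complete -- so $\cG_{\Ox}\cong\cG_x\times\Ox$, and the uniformizing space $\GR_{\cG,x}$ is simply the ordinary affine Gra\ss mannian $\Gr_{\cG_x}$ of the split, absolutely simple group $\cG_x/k$. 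By Theorem~\ref{Uniformisierung} the projection $\Gr_{\cG_x}\to\Bun_\cG$ is a torsor under the group ind-scheme $L\cG$ with $L\cG(S)=\cG\big((C\setminus\{x\})\times S\big)$, so $\Bun_\cG\cong[L\cG\backslash\Gr_{\cG_x}]$. (I run the argument in the case $\cG_\eta$ simply connected, so that $\Gr_{\cG_x}$ is connected -- whence $\Gamma(\Gr_{\cG_x},\cO)^{\times}=k^{\times}$ -- and $\Bun_\cG$ is connected by Theorem~\ref{Zusammenhangskomponenten}.) Descent of line bundles along this torsor then yields an exact sequence
$$0\longrightarrow\Hom(L\cG,\bG_m)\longrightarrow\Pic(\Bun_\cG)\longrightarrow\Pic(\Gr_{\cG_x}),$$
so the problem reduces to computing the character group of $L\cG$ and the image of the restriction map.

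The step I expect to be the main obstacle is the identification $\Hom(L\cG,\bG_m)\cong\prod_{y\in\Ram(\cG)}X^*(\cG_y)$. The relevant homomorphism is evaluation: restricting a section of $\cG$ over $C\setminus\{x\}$ to the fibre at $y$ and composing with a character of $\cG_y$ produces a character of $L\cG$, and one must prove that every character of $L\cG$ arises in this way and that the contributions of distinct ramification points are independent. On the open subcurve $C\setminus(\Ram(\cG)\cup\{x\})$, where $\cG$ is semisimple, there are no nontrivial characters, which disposes of the unramified directions; at a point $y\in\Ram(\cG)$ one has to analyse the parahoric group scheme $\cG_{\widehat{\cO_y}}$ of Bruhat--Tits -- this is where the tameness hypothesis enters -- and check that its reductive quotient accounts for exactly $X^*(\cG_y)$, that the pro-unipotent radical accounts for nothing, and that these characters extend to sections over a neighbourhood of $y$ and glue. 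This is the genuinely new ingredient compared with the case of constant groups, and the place where the hypotheses on $\cG$ are really used.

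It remains to describe the image of the restriction map. Since $\cG_x$ is a split, simply connected, absolutely simple group, $\Pic(\Gr_{\cG_x})\cong\bZ$, and under this isomorphism the map $\Pic(\Bun_\cG)\to\bZ$ is, by construction, the central charge at $x$. To see that its image is a full rank-one subgroup it suffices to exhibit a single line bundle on $\Bun_\cG$ with nonzero restriction to $\Gr_{\cG_x}$: as in the case of constant groups, a faithful representation of $\cG$ yields a morphism $\Bun_\cG\to\Bun_{\GL_n}$, and the pullback of the determinant-of-cohomology line bundle restricts on $\Gr_{\cG_x}$ to a nonzero multiple of the ample generator. Combining this with the identification of the character group from the previous paragraph and with exactness of the descent sequence, one obtains $0\to\prod_{y\in\Ram(\cG)}X^*(\cG_y)\to\Pic(\Bun_\cG)\to\bZ\to0$. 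Finally, for any point $x'\in C$ the central charge at $x'$ is again a homomorphism $\Pic(\Bun_\cG)\to\bZ$ that annihilates $\prod_{y}X^*(\cG_y)$: the bundle attached to a character $\chi_y$ with $y\ne x'$ is trivialized over the uniformizing space at $x'$, and the bundle attached to $\chi_{x'}$ lies in the part of $\Pic(\GR_{\cG,x'})$ spanned by characters of $\cG_{x'}$, which has central charge $0$. Hence it factors through the infinite cyclic quotient $\Pic(\Bun_\cG)\big/\prod_{y}X^*(\cG_y)$ as multiplication by a nonzero integer, which is the remaining assertion of the theorem.
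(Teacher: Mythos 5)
Your architecture is genuinely different from the paper's: you fix one unramified point $x$, write $\Bun_\cG\cong[L^{out}_{C-x}\cG\backslash\Gr_{\cG,x}]$, and reduce everything to the descent sequence $0\to\Hom(L^{out}_{C-x}\cG,\bG_m)\to\Pic(\Bun_\cG)\to\Pic(\Gr_{\cG,x})$. The problem is that the step you yourself flag as ``the main obstacle'' --- the identification $\Hom(L^{out}_{C-x}\cG,\bG_m)\cong\prod_{y\in\Ram(\cG)}X^*(\cG_y)$ --- is exactly the content of the theorem in this formulation, and it is not proved; it is only described. Moreover the plan you sketch for it (analyse the parahoric $\cG_{\widehat{\cO}_y}$, its reductive quotient and pro-unipotent radical) is local, whereas the object whose characters you must compute is the global ind-group of sections of $\cG$ over $(C-x)\times S$. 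The hard part is not producing the characters from $X^*(\cG_y)$ (that is the easy direction, and is how the paper constructs the map $\prod_y X^*(\cG_y)\to\Pic(\Bun_\cG)$ via $\Bun_\cG\to B\cG_y$); it is showing that every algebraic character of $L^{out}_{C-x}\cG$ vanishes on the subgroup of sections that are trivial along the fibres over $\Ram(\cG)$. That requires a generation statement for regular sections over $C-x$ subject to congruence conditions at the ramification points (and care with non-reduced test schemes), which is strictly harder than anything your sketch addresses, and nothing in the proposal supplies it.

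It is worth noting how the paper avoids precisely this computation: it never works with the out-group at a single fixed point. It first builds the $\prod_y X^*(\cG_y)$ part directly and proves injectivity using the maps $\Gr_{\cG,y}\to\Bun_\cG\to B\cG_x$ and the $\bP^1$'s attached to affine simple roots; it controls the $\bZ$-quotient via the relative Gra\ss mannian $\GR_\cG\to C$, showing $\Pic(\GR_\cG/C)/\prod X^*(\cG_x)$ is an \'etale sheaf with fibres $\bZ$, constant away from $\Ram(\cG)$ (this is also where the ``multiple of the central charge at any given point'' statement really comes from --- the multiplier $n_x$ can be nontrivial at $x\in\Ram(\cG)$, so your closing argument needs this sheaf-theoretic input rather than just ``factors through the quotient''). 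Only then, for a line bundle already twisted to die on $\prod_i\Gr_{\cG,x_i}$ for \emph{every} finite set of points, does it pass to the limit group $L^{out}_{k(C)}\cG$, where triviality of characters follows because $\cG(k(C))$ is generated by unipotent root subgroups $U_a(k(C))$ and $\bG_a$ has no characters. That generation argument is available over the generic point but does not obviously hold for your fixed group $L^{out}_{C-x}\cG$ with constraints at $\Ram(\cG)$; so as written your proof has a genuine gap at its central step, even though the target identification is true a posteriori.
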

Here the central charge homomorphism at a point $x\in C$ is given by pulling back a line bundle on $\Bun_\cG$ to the affine flag variety parameterizing bundles, together with a trivialization on $C-x$ and then applying the central charge homomorphism on the flag variety as constructed by Pappas and Rapoport \cite{Pappas-Rapoport} Section 10. This is recalled in more detail in Section 5 below.

Finally, again assuming that $\cG_{k(C)}$ splits over a tamely ramified extension of $k(C)$, we also prove an analog of the existence of reductions to generic Borel subgroups (\cite{Drinfeld-Simpson} Theorem 1) in the case of $\cG$-torsors, see Corollary \ref{Reduktion_auf_Borel}.

We would like to stress that these theorems are well known in the case of constant group schemes and a considerable part of the proofs of our results follow the lines of the proofs in this special case. The main difference of our approach is that we avoid the reduction to Borel subgroups in our proof. Instead of this we use a variant of the ``key observation'' in \cite{Pappas-Rapoport} to show that every tangent vector to the (twisted) affine flag manifold lies in the image of a map of the affine line into the affine flag manifold. The applications to the geometry of $\Bun_\cG$ are variations of the arguments in the case of constant group schemes, as explained in Faltings' article \cite{Faltings_Loopgroups} - of course some technical problems arise here. For example we find that the central charge morphism may not be independent of the point $x\in C$, but it may drop at points in $\Ram(\cG)$.

\noindent {\it Note added on revised version:} We will see that the proof of the uniformization theorem (Theorem \ref{Uniformisierung}) does not use the assumption that for $x\in \Ram(\cG)$ the group $\cG|_{\cO_x}$ is parahoric, so general connected Bruhat-Tits groups would do here. In the other theorems, we do however make use of the additional assumption.

\noindent {\bf Acknowledgements:} I thank M.\ Rapoport for explaining the conjectures formulated in \cite{Pappas-Rapoport-Buendel} to me and G.\ Harder for his explanations on Bruhat-Tits groups. I thank N.\ Naumann, a discussion with him on a related question in an arithmetic situation was the starting point for this article. Furthermore I am indebted to Y.\ Laszlo. He suggested many improvements on a previous approach to the main theorem of this article, in particular he suggested an argument helping to avoid the use of the strong approximation theorem and reduction to positive characteristics. I thank A.\ Schmitt his comments. I am indebted to the referee for many comments and corrections.

\noindent {\bf Notations:} $S$ will denote a noetherian base scheme defined over a field or an excellent Dedekind domain (for all our purposes it will be sufficient to assume that this is either the spectrum of a field or a smooth curve over a field).

$C\to S$ is smooth, projective, absolutely irreducible curve over $S$, i.e., the morphism $C\to S$ is smooth, projective of relative dimension $1$, such that all geometric fibres are connected.

For any $S$-scheme $T$ we will denote the base change from $S$ to $T$ by a lower index $T$, e.g., $C_T=C\times_S T$.
If $T=\Spec(R)$ happens to be affine then we will denote the base change to $\Spec(R)$ by a lower index $R$, e.g., $C_R:=C\times_S \Spec(R)$.

For a smooth, affine, group scheme $\cG$ over $C$ we will denote by $\Lie(\cG)$ its Lie-algebra, which is a vector bundle over $C$.

Given a $\cG$-torsor $\cP$ on $C$ and a scheme $F\to C$, affine over $C$ on which $\cG$ acts, we will denote by $\cP\times^\cG F$ the associated fibre bundle over $C$, i.e., $\cP\times^\cG F:= \cP \times_C F/\cG$ where $\cG$ acts diagonally on $\cP \times_C F$. Since $\cP$ is locally trivial for the \'etale topology descent for affine schemes implies that the quotient $\cP \times_C F/\cG$ is a scheme.

Also we will write $\cP_\univ$ for the universal $\cG$-torsor on $\Bun_\cG \times C$.


\section{Preliminaries on moduli stacks of torsors}\label{prel}

In this preliminary section we recall the basic results concerning the moduli stacks $\Bun_\cG$ that we will frequently use:
\begin{proposition}\label{BunG}
Let $\cG$ be a smooth affine group scheme over $C$, which is separated and of finite type. Denote by $\Bun_\cG$ the stack of $\cG$-torsors on $C$. Then $\Bun_\cG$ is a smooth algebraic stack, which is locally of finite type.
\end{proposition}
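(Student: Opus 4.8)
The plan is to verify the three standard properties—algebraicity, smoothness, and local finite type—by a combination of descent, a presentation as a quotient of a Quot-type scheme, and deformation theory. Since $\cG$ is a smooth affine group scheme, it acts freely on any $\cG$-torsor, so the main task is to exhibit a smooth atlas.

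\textbf{Algebraicity.} First I would choose an embedding $\cG \hookrightarrow \GL(\cE)$ for some vector bundle $\cE$ on $C$ (possible since $\cG$ is affine of finite type over $C$), with quotient $\GL(\cE)/\cG =: \cQ$ which is a quasi-affine $C$-scheme of finite type. Then the map $\Bun_\cG \to \Bun_{\GL(\cE)}$ sending a $\cG$-torsor $\cP$ to $\cP \times^{\cG} \GL(\cE)$ identifies $\Bun_\cG$ with the stack of pairs $(\cF, \sigma)$ where $\cF$ is a $\GL(\cE)$-torsor and $\sigma$ is a section of the associated bundle $\cF \times^{\GL(\cE)} \cQ \to C$. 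Since $\Bun_{\GL(\cE)} = \Bun_{r}$ (the stack of rank-$r$ bundles, $r = \rk \cE$, up to a twist) is a well-known algebraic stack locally of finite type, and sections of a finitely-presented quasi-affine morphism form a relative algebraic space of finite type over the base, $\Bun_\cG$ is representable over $\Bun_{\GL(\cE)}$ by an algebraic space; hence $\Bun_\cG$ is itself an algebraic stack. Alternatively—and this is perhaps cleaner—one can argue directly: for a fixed sufficiently positive line bundle, impose a rigidification (a trivialization at a divisor, or a level structure) to cut out a scheme, then divide by the rigidification group; but the $\GL(\cE)$-route is the most economical.

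\textbf{Smoothness and local finite type.} For local finite type, I would use that $\Bun_{\GL(\cE)}$ is locally of finite type and that $\Bun_\cG \to \Bun_{\GL(\cE)}$ is of finite type, which is inherited. For smoothness, the clean approach is deformation theory: I would check the infinitesimal lifting criterion. Given a square-zero extension $S_0 \hookrightarrow S$ with ideal $I$ and a $\cG$-torsor $\cP_0$ on $C_{S_0}$, the obstruction to lifting lies in $H^2(C_{S_0}, \Lie(\cG)_{\cP_0} \otimes I)$, where $\Lie(\cG)_{\cP_0}$ is the twist of $\Lie(\cG)$ by $\cP_0$ via the adjoint action; this is a vector bundle on the curve, so its $H^2$ vanishes (cohomological dimension of a curve is $1$), and lifts form a torsor under $H^1(C_{S_0}, \Lie(\cG)_{\cP_0}\otimes I)$. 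This gives formal smoothness; combined with local finite type one gets smoothness. I would cite the standard deformation theory of torsors (e.g.\ as in Olsson or in the treatment used by Drinfeld–Simpson and Faltings) rather than redo it.

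\textbf{Main obstacle.} The only genuinely delicate point is the representability/algebraicity input: making sure that ``stack of sections of an affine-over-$C$ bundle over a relatively algebraic base'' is itself algebraic of finite type, and that the quotient $\GL(\cE)/\cG$ is quasi-affine so that sections are cut out by a finite-type, even quasi-affine, morphism. One must also be slightly careful that an embedding $\cG \hookrightarrow \GL(\cE)$ with quasi-affine quotient exists over the (possibly non-affine) base $C$; here one uses that $C$ is a curve, so coherent sheaves have enough global generation after twisting, and that $\cG$ being flat affine of finite type over $C$ embeds in some $\GL(\cE)$ by a relative version of the standard embedding theorem. Everything else—the cohomological vanishing, inheritance of finite type—is routine for curves.
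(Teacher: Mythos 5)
Your argument is correct in outline, but it follows a genuinely different route from the paper. The paper verifies Artin's criteria directly: it shows the diagonal is representable by identifying $\Isom(\cP,\cQ)$ with the sections over the projective curve of the affine $C$-scheme $\cP\times^\cG\cQ$, then runs the deformation theory of torsors by hand (obstructions in $H^2$ of a curve vanish, lifts form a torsor under $H^1$, automorphisms under $H^0$), algebraizes versal deformations via Grothendieck's existence theorem, and checks openness of versality through the Kodaira--Spencer map; a by-product is the explicit description of the tangent stack as $[H^1(C,\Ad(\cP))/H^0(C,\Ad(\cP))]$, which the paper reuses later (e.g.\ in Corollary \ref{Glatte_Umgebungen}). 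You instead bootstrap from $\Bun_{\GL(\cE)}$ via a faithful representation and the section space of the associated $\cQ$-bundle, plus the same deformation-theoretic smoothness argument. That route works, but it buys algebraicity at the cost of nontrivial inputs that the stated proposition does not require: you need a closed embedding $\cG\hookrightarrow\GL(\cE)$ over the non-affine base $C$ together with a representable quotient, and quasi-affineness of $\GL(\cE)/\cG$ is \emph{not} automatic --- this is exactly why the paper's Assumption \ref{Annahme}, following Pappas--Rapoport, embeds into $\GL(\cE)\times\bG_m$ rather than $\GL(\cE)$; alternatively one can settle for a representable quasi-projective quotient (Anantharaman, over a Dedekind base), at the price that the relative section space is then only locally of finite type, which still suffices for the stated conclusion. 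Note also that the proposition is asserted for an arbitrary smooth affine separated finite-type $\cG$, and the paper's Artin-criteria proof needs no representation-theoretic input at all, whereas your reduction quietly imports the linearity theorem over $C$; as long as you cite that (and the representability of section functors) precisely, your proof is complete.
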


\begin{proof} This is certainly well-known, but I couldn't find a reference, so let us briefly indicate why Artin's criteria (\cite{LMB} Corollaire 10.11 and Remarque 10.12, where one also finds the necessary references on deformation theory) hold for $\Bun_\cG$:
First we need to check that given a $S$-scheme $T$, and $\cP,\cQ\in \Bun_\cG(T)$ the sheaf $\Isom(\cP,\cQ)$ is representable. Considered as sheaf over $C\times T$ the $\cG$-isomorphisms of $\cP$ and $\cQ$ are given by $\cP\times^\cG \cQ =(\cP\times \cQ)/\cG$ (where $\cG$ acts diagonally on $\cP\times \cQ$), which is affine over $C\times T$. Thus the sheaf $\Isom(\cP,\cQ)$ is the sheaf of sections over $C$ of $(\cP\times^\cG \cQ)$. This sheaf is representable by an separated scheme of finite type, because $C$ is projective. Thus we have shown that the diagonal of $\Bun_\cG$ is representable, separated and of finite type.

To verify the other criteria, we need to recall deformation theory of $\cG$-torsors: Let $A$ be a local Artin ring with $\Spec(A)\to S$. The maximal ideal of $A$ is denoted $\cm$ and denote the residue field $A/\cm=k$. Let $I\subset A$ an ideal with $I\cm=0$. Let $\overline{\cP}\in \Bun_\cG(A/I)$ and denote the special fibre of this family by $\cP_0:= \overline{\cP}_{k}$. 
Finally write $\Ad(\overline{\cP}):=\overline{\cP}\times^\cG \Lie(\cG)$ for the vector bundle defined by $\overline{\cP}$ via the adjoint representation of $\cG$.

We need to study the possible extensions of $\overline{\cP}$ to a family $\cP\in \Bun_\cG(A)$.
There exists an \'etale covering $U\tto C$ with $U$ affine, such that $\cP_0|_{U_k}=\cG \times U_{k}$ is trivial. Applying the lifting criterion for smoothness to $\overline{\cP}\to C_{A/I}$, we find that this trivialization can be lifted to a trivialization of $\overline{\cP}|_{U_{A/I}}$.

Thus $\overline{\cP}$ is given by a \v{C}ech-cocycle $\overline{g}_U\in \cG((U\times_C U)_{A/I})$. Again, $U$ being affine and $\cG$ being smooth, this $\overline{g}$ can be lifted to an element $g\in \cG((U\times_C U)_{A})$, and the possible such $g$ form a torsor under $H^0(U\times_C U)_{A/I},\Ad(\overline{\cP})\tensor_{A/I} I)$. The obstruction to modify $g$ to satisfy the cocycle condition defines an element in $H^2(C_{A/I},\Ad(\overline{\cP})\tensor_{A/I} I)=0$. So we see that we can aways find an extension $\cP\in \Bun_\cG(A)$. 
Moreover, we see that the possible extensions $\cP$ of $\overline{\cP}$ are parameterized by $H^1(C_{A/I},\Ad(\overline{\cP})\tensor_{A/I} I)$, and the automorphisms of such extensions (i.e., automorphisms of $\cP$, inducing the identity on $\overline{\cP}$) are parameterized by $H^0(C_{A/I},\Ad(\overline{\cP})\tensor_{A/I} I)$. Since the groups $H^i(C,\Ad(\cP_0))$ are finite dimensional vector spaces, this implies that for every $\cG$-torsor $\cP_0\in\Bun_\cG(k)$ there exists a versal deformation. Any such formal deformation is algebraizable by Grothendieck's existence theorem (EGA III, Section 5).

Finally we need to check that versality is an open condition. In the above discussion we already noted that for every $\cP_0\in \Bun_{\cG}(k)$ the fibre of tangent stack (which by definition is the algebraic stack given on affine schemes by $\Spec(R) \mapsto \Bun_{\cG}(R[\epsilon]/(\epsilon^2))$, see \cite{LMB} D\'efinition 17.13) is isomorphic to the stack-quotient of the vector spaces $[H^1(C_k,\Ad(\cP_0))/H^0(C_k,\Ad(\cP_0))]$. Since $C$ is a curve, the formation of $\bR^1 p_* \Ad(\cP)$ commutes with base-change. In particular for a family $\Spec(R) \to \Bun_\cG$ to induce a surjection on tangent spaces is equivalent to the condition that the induced Kodaira-Spencer-map from the tangent sheaf of $\Spec(R)$ to the pull back of $\bR^1 p_{\Bun_{\cG},*} \Ad(\cP_{\univ})$ to $\Spec(R)$ is surjective and this is an open condition.
\end{proof}

\section{Preliminaries on twisted affine flag manifolds}

We will need to use the construction of loop groups and twisted flag manifolds in families. This is well known for constant groups see e.g., the Appendix of \cite{Gaitsgory_CentralElements}, so we just have to check that a similar construction works in our situation, in particular at those points of $C$ where the group scheme is not semisimple. Similarly we have to give the analog of the moduli interpretation of the affine Gra\ss mannians, parameterizing torsors trivialized outside a point of $C$.

In this section we will assume that our family $C\to S$ has a section $s\colon S\to C$ and $\pi\colon \cG\to C$ will be a smooth affine group scheme over $C$.

In order to reduce to the case of $\GL_n$ we have to make some technical assumptions, these will automatically be satisfied in the situation considered in the introduction.
\begin{assumption}\label{Annahme}
    \begin{enumerate}
    \item The conditions of \cite{Pappas-Rapoport} Proposition 1.3 are satisfied for our family i.e.: There exists a vector bundle $\cE$ on $C$ and a faithful representation $\rho\colon \cG \to \GL(\cE) \times \bG_m$ such that $\rho$ is a closed embedding and the quotient $\cQ=(\GL(\cE)\times \bG_m)/\cG\to C$ is representable and quasi-affine.
    \item Denote by $\cO_C[\cG]$ the $\cO_C$-algebra of functions on $\cG$. Assume that locally in the Zariski topology on $C$ there exist vector bundles $\cE,\cI$ on $C$ which are $\cO_C[\cG]$-comodules (i.e., $\cG$ acts on $\cE,\cI$) and a $\cG$-equivariant exact sequence:
    $$ \cI \tensor \Sym^\bullet \cE \to \Sym^\bullet\cE \to \cO_C[\cG]\to 0.$$ 
    To relate this to condition (1), note that given a vector bundle $\cE_0$ on $C$ and a faithful representation $\rho\colon \cG \to \SL(\cE_0)$ then we get a closed embedding $i\colon \cG\to \SL(\cE_0)\to \cE\text{nd}(\cE_0)$ and denote the ideal sheaf of $i(\cG)$ by $\cI_0$. Then we can can use  $\cE:=\cE\text{nd}(\cE_0)$ and then choose a $\cG$-equivariant vector bundle $\phi\colon  \cI\to \cI_0$ such that the image of $\phi$ generates $\cI_0$.
    \end{enumerate}
\end{assumption}

\begin{example*}\label{Beispieldarstellung}
We are interested in the following situations:
    \begin{enumerate}
    \item $C/k$ and $\cG$ are as in the introduction and $s\in C(k)$ is a rational point. This is the situation considered in \cite{Pappas-Rapoport} Section 1.b. Since $C$ is regular of dimension $1$ the group scheme $\cG$ always admits a faithful representation as above, at least locally as is shown in loc.cit. by considering a subrepresentation of the regular representation $\pi_*(\cO_\cG)$ of $\cG$. Given an open subset $U\subset C$ any representation on a vector bundle $\cE_U\incl{} \pi_*(\cO_\cG)|_U$ defined over $U$ extends to the flat closure $\cE \subset \pi_*(\cO_\cG)$. Thus, taking a direct sum of such representations we also find that $\cG$ has a faithful representation over $C$. The assumption on the quotient $\cQ$ is unchanged if we add representations, because given two vector bundles $\cE_1,\cE_2$ we have  $\GL(E_1\oplus E_2)/\cG= \GL(\cE_1\oplus \cE_2) \times^{\GL(\cE_1)\times\GL(\cE_2)} \GL(\cE_1)\times \GL(\cE_2)/\cG$ and the quotient $\GL_{n_1+n_2}/\GL_{n_1}\times \GL_{n_2}$ is affine.
    \item Given $C/k$ and $\cG/C$ as in (1) we can vary the point in $C$ as follows. Consider the constant family $\pr_2\colon C\times C \to S:=C$ given by the projection on the second factor together with the diagonal section $\Delta\colon  S=C \to C\times C$ and $\cG_C:=\cG \times C\to C\times C$. The representation from (1) can be pulled back to $C\times C$, so again the extra condition is automatically satisfied.
    \item Given $C/k$ and $\cG$ as in (1) we can extend everything to a family over a finitely generated $\bZ$-algebra: The schemes $C,\cG$, the section $s$, the faithful representation and the quasi-affine quotient are defined over some finitely generated $\bZ$-algebra $A$ and we can take $S=\Spec(A)$. The same holds for the second assumption.
    \end{enumerate}
\end{example*}

Let us fix a notation for completions. Given an affine scheme $X=\Spec(A)$ and a closed subscheme $Z=\Spec(A/I)\subset X$, we denote by $\widehat{X}_Z:=\Spec(\widehat{A_I})$ the spectrum of the completion of $A$ along $I$. We use the same notation for non-affine $X$ if $Z$ is contained in an affine subset of $X$.
If $Z$ is given by the image of a closed embedding $s\colon Z \to X$ we write $\widehat{X}_s:=\widehat{X}_{s(Z)}$. In particular in our situation  $\widehat{C_s}$ is the completion of $C$ along $s$ and we denote by $\mathring{C}:=C-s$ the complement of the image of the section $s$. We define the following functors on affine schemes $T=\Spec(R)$ over $S$:
\begin{enumerate}
\item $L^+\cG(T):=\cG(\widehat{(C\times T)}_{s\times T})$.
\item $L\cG(T):=\cG(\widehat{(C\times T)}_{s\times T}\times_{C} (\mathring{C}))$
\item $\GR_{\cG}(T):=(L\cG/L^+\cG)^\#(T)$, where the sheaffification $^\#$ is taken in the fpqc-topology.
\end{enumerate}
In particular if $S=\Spec(k)$ and $\cG=G\times C$ is a constant group scheme then these functors coincide with the classical loop groups and the affine Gra\ss mannian.

\begin{proposition}\label{LG} Under Assumption \ref{Annahme} we have:
\begin{enumerate}
\item The functor $L^+\cG$ is representable by an $S$-scheme.
\item The functor $L\cG$ is representable by an ind-scheme over $S$.
\item The functor $\GR_\cG$ is representable by an ind-scheme over $S$ and the morphism $L\cG\to \GR_\cG$ admits local sections in the \'etale topology.
\end{enumerate}
\end{proposition}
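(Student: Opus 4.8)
\emph{Strategy, and parts (1), (2).} The plan is to reduce everything to the general linear group, as in the construction for constant groups (compare \cite{Gaitsgory_CentralElements}) and its twisted variant in \cite{Pappas-Rapoport} Section~1; Assumption \ref{Annahme} is made precisely so that these reductions survive also at the points where $\cG$ fails to be semisimple. Representability may be checked Zariski-locally on $S$, and since $L^+\cG$ and $L\cG$ involve only a formal neighbourhood of the section, I may also shrink $C$ around $s(S)$ so that a local uniformizer $t$ exists and the presentation of Assumption \ref{Annahme}(2) is available: writing $\cO_C[\cG]=\Sym^\bullet\cE/(\cI)$ exhibits $\cG$ as a closed subscheme of the affine $C$-scheme $\bV:=\Spec(\Sym^\bullet\cE)$, cut out by the ideal generated by $\cI$. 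Defining $L^+\bV$, $L\bV$ from $\bV$ exactly as $L^+\cG$, $L\cG$ are defined from $\cG$, we get closed immersions $L^+\cG\hookrightarrow L^+\bV$ and $L\cG\hookrightarrow L\bV$ (locally, $\cG$ is cut out in $\bV$ by finitely many equations). Now $L^+\bV=\varprojlim_n L^+_n\bV$, where $L^+_n\bV$ is the functor of sections of $\bV$ over $C_n\times_S T$ --- here $C_n\subset C$ is the $n$-th infinitesimal neighbourhood of $s(S)$ --- which is the total space of a vector bundle on $S$; hence $L^+\bV$ is an affine $S$-scheme, and $L\bV=\bigcup_N t^{-N}L^+\bV$ is an ind-affine ind-scheme. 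Therefore $L^+\cG$ is an affine $S$-scheme and $L\cG$ an ind-scheme, which gives (1) and the representability statement in (2). Finally, since $L^+\cG=\varprojlim_n\Res_{C_n/S}(\cG|_{C_n})$ with the $\Res_{C_n/S}(\cG|_{C_n})$ smooth affine of finite type over $S$ and the transition maps smooth surjective with vector-group kernels (as $\cG$ is smooth), $L^+\cG$ is in addition pro-smooth over $S$ --- I will use this in the last step.

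\emph{Part (3), representability of $\GR_\cG$.} Here I would invoke Assumption \ref{Annahme}(1): the closed embedding $\rho\colon\cG\hookrightarrow H:=\GL(\cE)\times\bG_m$ whose quotient $\cQ:=H/\cG\to C$ is representable and quasi-affine. One first handles $H$: as in the constant case, $\GR_{\GL(\cE)}$ is the functor of $\widehat\cO$-lattices $L$ in $\cE\otimes\widehat\cO[\tfrac1t]$ which are bounded (i.e.\ $t^N\cdot(\cE\otimes\widehat\cO)\subset L\subset t^{-N}\cdot(\cE\otimes\widehat\cO)$ for some $N$), each bounded locus sitting as a closed subscheme of a relative Grassmannian over $S$, so $\GR_{\GL(\cE)}$ is an ind-projective ind-scheme over $S$; since $\GR_{\bG_m}=\underline{\bZ}_S$, the product $\GR_H=\GR_{\GL(\cE)}\times_S\underline{\bZ}_S$ is an ind-scheme. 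It then suffices to show that $\GR_\cG\to\GR_H$ is a locally closed immersion, for then $\GR_\cG$ is exhausted by its locally closed --- hence schematic --- intersections with a presentation of $\GR_H$. On a chart where a $T$-point of $\GR_H$ is given by an $H$-torsor $\cP'$ on $\widehat{(C\times T)}_{s\times T}$ together with a trivialisation $\beta$ of $\cP'$ over the puncture, lifting it to $\GR_\cG$ means giving a reduction of $\cP'$ to $\cG$ which induces $\beta$ over the puncture, i.e.\ a section of the quasi-affine scheme $\cP'\times^H\cQ\to\widehat{(C\times T)}_{s\times T}$ which over the puncture agrees with the base-point section determined by $\beta$. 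Writing $\cP'\times^H\cQ$ as the inverse limit of its restrictions to the $C_n\times_S T$, forming the --- still quasi-affine --- Weil restrictions of these along $C_n\times_S T\to T$, and imposing the agreement with $\beta$ over the puncture, exhibits the locus of such lifts as a locally closed subscheme of the chart. This is the heart of the argument: it is exactly where quasi-affineness of $\cQ$ enters, and it requires some care because $\widehat{(C\times T)}_{s\times T}$ is not reduced, so ``agreeing over the puncture'' is a genuine closed-in-an-open condition and not a density statement. Granting it, $\GR_\cG$ is a locally closed sub-ind-scheme of the ind-scheme $\GR_H$, hence an ind-scheme.

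\emph{Part (3), local sections.} Finally, $\GR_\cG=(L\cG/L^+\cG)^\#$ by definition, and since ``$g^{-1}g'\in L^+\cG$'' is a closed condition on $L\cG\times_S L\cG$ that may be tested fpqc-locally, $L\cG\to\GR_\cG$ is a torsor under $L^+\cG$ for the fpqc topology. Because $L^+\cG$ is pro-smooth over $S$ with smooth surjective transition maps, a d\'evissage along those maps shows --- just as for $LG\to\Gr_G$ in the constant case --- that any such torsor becomes trivial after an \'etale cover of the base; this yields the \'etale-local sections of $L\cG\to\GR_\cG$ asserted in (3) and completes the proof.
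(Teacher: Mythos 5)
Your parts (1) and (2) are fine and essentially the paper's argument (the paper embeds $\cG$ into affine space via the faithful representation rather than via $\Spec\Sym^\bullet\cE$, but this is cosmetic), and your pro-smoothness of $L^+\cG$ and the d\'evissage for torsors under it are correct in outline. The problem is in part (3), at the sentence ``lifting it to $\GR_\cG$ means giving a reduction of $\cP'$ to $\cG$ which induces $\beta$ over the puncture.'' Here you are using a moduli description of $\GR_\cG$ as if it were the definition, whereas $\GR_\cG$ is \emph{defined} as the fpqc sheafification of $L\cG/L^+\cG$; in the paper the moduli description is Proposition \ref{Uniformisierungs_Abbildung}(2), proved \emph{after} and \emph{using} Proposition \ref{LG}(3). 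One direction is formal (a point of $\GR_\cG$ over $(\cP',\beta)$ yields a unique compatible section of $\cP'\times^H\cQ$ over the formal disc, by separatedness and schematic density of the puncture), but the converse --- that every compatible section/reduction comes from a point of the sheafified coset space --- requires showing that the resulting $\cG$-torsor on $\widehat{(C_T)}_{s_T}$ is, \'etale- or fpqc-locally on $T$, trivial on the whole formal disc. This is exactly the smoothness/henselian lifting step that is the heart of the paper's proof: there it appears as $L^+\cQ=L^+\cH/L^+\cG$ as \'etale sheaves, obtained from smoothness of $\cH\to\cQ$ together with the fact that $R[[t]]$ is henselian for $R$ henselian local, and it is what simultaneously identifies the locally closed locus $Y\subset\GR_\cH$ with $\GR_\cG$ and produces the \'etale-local sections of $L\cG\to\GR_\cG$. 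Without it, your argument only shows that $\GR_\cG$ is a subfunctor of a locally closed sub-ind-scheme of $\GR_H$, which does not give representability; and your final d\'evissage does not repair this, since it concerns torsors under the group $L^+\cG$ over the base and cannot produce the needed local trivialization of a $\cG$-torsor over the formal disc (equivalently, the surjectivity of $\GR_\cG\to Y$).

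Secondary, but worth fixing: the ``Weil restriction along $C_n\times_S T\to T$, imposing agreement with $\beta$'' step is too vague to see the locally closed condition. The standard argument (and the paper's) uses the open immersion $\cQ\subset\cZ$ with $\cZ$ affine over $C$: the $\beta$-section over the punctured disc extends to a $\cZ$-valued section of the disc if and only if finitely many polar coefficients vanish on each bounded chart (a closed condition), and such an extension lands in $\cQ$ if and only if it does so along $t=0$ (an open condition), because every point of $\Spec R[[t]]$ specializes into $V(t)$. If you add the smoothness-lifting lemma above and make this locally-closedness argument explicit, your route --- lattice model for $\GR_{\GL(\cE)}$, moduli-theoretic fibre description, pro-smooth d\'evissage for the torsor --- becomes a correct and genuinely alternative organization of the proof; as it stands, the key lemma is missing.
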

\begin{proof}
The questions are local in $S$, so we may assume that $S=\Spec(A)$ is affine, that the vector bundle $\cE$ from Assumption \ref{Annahme} (2)  is trivial on an open neighbourhood $U\subset C$ of $s$ and that there exist a function $t$ on $U$, which is a local parameter at $s$ i.e., such that $s$ is the subscheme defined by $t$.

In this case we may argue as in the case of constant group schemes: The representation $\rho$ defines a closed embedding $\cG|_U\subset \bA_S^{(n+1)^2} \times_S U$.  We have defined $L^+\bA^n(R)=R[[t]]^{(n+1)^2}$ so this functor is represented by an infinite affine space $\prod_{i\in \bN} \bA_S^{(n+1)^2}$ and the subfunctor $L^+\cG(R)=\cG(R[[t]])$ is given by an infinite set of polynomial equations, so it is again representable.

The same argument holds for the ind-scheme $L\cG$: In the case of $\cG=\GL_n$ the functor $L\cG$ is the union of the subfunctors given by those matrices for which the entries are Laurent series of the form $\sum_{i>-N} a_i t^i$, which are representable for every $N$. In the general case we use a representation $\cG\subset \GL_n$ to describe the functor $L\cG$ as a subfunctor of $L\GL_n$, given by polynomial equations. In this way we obtain ind-schemes, defined over open subsets of $S$. Note that the schemes occurring in the inductive limit glue, because we can compare the different choices of local parameters on the intersections. This is because a different choice of a local parameter $t^\prime\in \cO_U$ can be expanded as a power series in $\widehat{\cO}_{U,s}=A[[t]]$ as 
$t^\prime = \alpha t
+\sum_{i>1} \alpha_i t^i$ with $\alpha\in A^*$. In particular ${t^\prime}^{-1}=t^{-1}(\alpha + \sum_{i>0} \alpha_{i+1} t^i)^{-1}$ and the second factor is an invertible powerseries, so we see that the different choices of $t$ respect the subfunctors defined above using the pole order. So we can glue the subschemes to obtain an
ind-scheme over $S$.

Finally the argument for the affine Gra\ss mannian given in \cite{Pappas-Rapoport} Theorem 1.4 generalizes as well: By Assumption \ref{Annahme} we have an embedding $\cG\subset \GL_n(\cE)\times \bG_m=:\cH$ such that the quotient $\cQ=\cH/\cG$ is quasi-affine over $C$, i.e. $\cQ$ is an open subscheme of a scheme $\cZ$, which is affine over $C$.  Since $\cG$ is smooth the map $p\colon\cH\to \cQ$ is smooth, so we can lift any section of $\cQ$ locally in the \'etale topology to a section of $\cH$. Since for any henselian local $\Spec(R)\to S$ we can find a local coordinate for the section $s_R$ we obtain $L^+\cQ(R)=\cQ(R[[t]])$ and $R[[t]]$ is again henselian. Therefore we find $L^+\cQ=L^+\cH/L^+\cG$ as fpqc-sheaves. 

We write $Lq\colon L\cH \to L\cQ$ for the map induced by $q\colon\cH\to \cQ$ and $e_\cQ\colon S\to L^+\cQ \subset L\cQ$ for the section induced by the neutral element of $L\cH$. Note that $L\cG=(Lq)^{-1}(e_\cQ)$. 

By construction $L^+\cZ \hookrightarrow L\cZ$ is a closed subscheme, so that $L^+\cQ\subset L\cZ$ is locally closed. Therefore the preimage $(Lq)^{-1}(L^+\cQ)\subset L\cH$ is a locally closed sub-ind-scheme, which is $L^+\cH$ invariant.
Also any element $h\in (Lq)^{-1}(L^+\cQ)(R)$ defines $Lq(h)\in L^+\cQ(R)$, so \'etale locally on $R$ we can write $h=g\cdot h^+$ with $g\in L\cG$ and $h\in L^+\cH$. 

Since the $L^+\cH$-torsor $L\cH \to \GR_\cH$ is Zariski-locally trivial, this implies that the image $Y$ of $Lq^{-1}(L^+\cQ)$ in $\GR_\cH$ is again locally closed. We already noted, that $L\cG$ maps to $Lq^{-1}(L^+\cQ)$, so that we get a map $\pi\colon L\cG \to Y$ and we claim that this identifies $L\cG/L^+\cG\cong Y$: 

First, since $L^+\cG \subset L^+\cH$, the map $\pi$ factors through $\GR_\cG$. Let us show that the map has sections locally in the \'etale topology. The map $L\cH\to \GR_\cH$ being a Zariski locally trivial $L^+\cH$-torsor, we can lift any $R$-valued point of $Y$ Zariski locally to a point of $Lq^{-1}(L^+\cQ)$ and locally in the \'etale topology we have just seen that up to multiplication by an element of $L^+\cH$ such a point can be lifted to $L\cG$.

Finally, any two elements $h,h^\prime$ in a fibre $L\cG\to Y$ differ by an element of $L\cG\cap L^+(\cH)=L^+\cG$, so the map $L\cG\to Y$ is indeed a $L^+\cG$-torsor.
\end{proof}

\begin{notation}
If $C$ is a curve over $S=Spec(k)$ any point $x\in C(k)$ can be viewed as a section of $C\to \Spec(k)$. In this case we will denote by $\Gr_{\cG,x}$ the ind-scheme over $S=\Spec(k)$
constructed above. Note that in the setting of example (2) this can also be viewed as the
fiber of $\GR_\cG$ over $x$. Similarly $L\cG_x$ will denote the fiber of $L\cG$ over $x$.
This is a slight abuse of notation, we really consider $(L\cG)_x$, which is the loop group as defined in \cite{Pappas-Rapoport} and not the standard
loop group of the fiber $\cG_x$, which would be a loop group of a possibly non-semisimple group.  This should not cause confusion, since the letter $\cG$
indicates that we are working with non-constant group schemes.
\end{notation}
\begin{remark}
Note that if $\cG_x$ is not semisimple, then $\Gr_{\cG,x}$ is a (twisted) affine flag manifold and not an affine Gra\ss mannian. In particular the family $\GR$ constructed above is different from the family used by Gaitsgory \cite{Gaitsgory_CentralElements} to construct the center of the Iwahori-Hecke algebra. In particular the above family does not contain the extra $G/B$-factor in the fibres over those points where $\cG_x$ is semisimple.
\end{remark}
Next, we need to recall the construction and the basic properties of the map from the twisted affine flag manifold to the moduli stack of $\cG$ torsors. This is certainly well-known, however we could not find a reference for the case of non-constant group-schemes. 
\begin{proposition}\label{Uniformisierungs_Abbildung}
\begin{enumerate}
\item 
The ind-scheme $L\cG$ represents the functor given on noetherian rings $R$ over $S$ by
$\cG$-torsors $P$ on $C_R$ together with trivializations on $C_R-s_R$ and the formal completion
$\widehat{C_{s_R}}$.
\item The ind-scheme $\GR_\cG$ represents the functor given on noetherian rings $R$ over $S$ by
$\cG$-torsors together with a trivialization on $C_R-s_R$. In particular, there is a natural forgetful map $p\colon \GR_\cG \to \Bun_{\cG}$.
\item The map $p\colon \GR_\cG \to \Bun_\cG$ is formally smooth.
\item For any point $x\in C(k)$ the map $p_x\colon  \Gr_{\cG,x} \to \Bun_{\cG}$ is formally smooth. In particular, let $P\in \Gr_{\cG,x}$ be a point defining the $\cG$-torsor $p_x(P)=:\cP$ on $C$, then the map $p$ induces a surjection of tangent spaces
$$dp_P\colon  T_{\Gr_{\cG,x},P} \tto H^1(C,\cP\times^\cG \Lie(\cG)).$$
\end{enumerate}
\end{proposition}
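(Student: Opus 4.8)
The plan is to base everything on two moduli interpretations that I would obtain by a Beauville--Laszlo gluing argument, and then read off the formal smoothness statements from the infinitesimal lifting criterion, using crucially that the complement of the section is affine over the base. For part (1), given a noetherian $S$-algebra $R$ and an element $g\in L\cG(R)=\cG\bigl(\widehat{(C\times R)}_{s\times R}\times_C\mathring C\bigr)$, I would glue the trivial $\cG$-torsor on the formal disc $\widehat{C_{s_R}}$ to the trivial $\cG$-torsor on $C_R-s_R$, using $g$ as transition datum on the overlap $\widehat{C_{s_R}}\times_C\mathring C$. Since $\cG$ is affine a $\cG$-torsor is an affine scheme over $C_R$, so the gluing lemma of \cite{BeauvilleLaszlo} (applicable because $s_R$ is a relative effective Cartier divisor) produces a $\cG$-torsor on $C_R$ carrying trivializations on $C_R-s_R$ and on $\widehat{C_{s_R}}$; this is functorial in $R$, and conversely any such datum recovers $g$ as its transition function on the punctured disc, so the two constructions are mutually inverse and $L\cG$ represents the stated functor. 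For part (2), $L^+\cG(R)=\cG(\widehat{C_{s_R}})$ acts on $L\cG(R)$ by changing the trivialization on the disc, so $(L\cG/L^+\cG)^\#$ is the fpqc-sheafification of the functor ``$\cG$-torsor with trivialization on $C_R-s_R$ and on $\widehat{C_{s_R}}$, modulo the latter''. The natural transformation from this to the (already fpqc-sheafy) functor ``$\cG$-torsor with trivialization on $C_R-s_R$'' is a monomorphism of presheaves, and it becomes an isomorphism after sheafification because such a trivialization extends \'etale-locally on $R$ to one on $\widehat{C_{s_R}}$: indeed $\cG$ is smooth, so a $\cG$-torsor on $\Spec(\widehat{\cO}_{C_R,s_R})$ is trivial after an \'etale cover of $\Spec R$ (trivialize over the closed fibre, then lift step by step by smoothness and completeness). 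Forgetting the trivialization then defines $p\colon\GR_\cG\to\Bun_\cG$.

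For part (3) I would check the infinitesimal lifting criterion. Let $R\to R_0=R/I$ be a square-zero extension, let $\cP\in\Bun_\cG(R)$, and suppose $\cP_{R_0}$ is equipped with a trivialization $\sigma_0$ on $C_{R_0}-s_{R_0}$, i.e.\ a point of $\GR_\cG(R_0)$ over the image of $\cP$ in $\Bun_\cG(R_0)$. One must lift $\sigma_0$ to a trivialization of $\cP$ on $C_R-s_R$. This is exactly the problem of lifting the section $\sigma_0$ of the smooth $(C_R-s_R)$-scheme $\cP|_{C_R-s_R}$ along the square-zero thickening $C_{R_0}-s_{R_0}\hookrightarrow C_R-s_R$; by the usual deformation theory of sections of a smooth morphism the obstruction lies in $H^1\bigl(C_{R_0}-s_{R_0},(\cP_{R_0}\times^\cG\Lie(\cG))\tensor_{R_0}I\bigr)$, and this group vanishes because $C_{R_0}-s_{R_0}$, being the complement of a section of a relative curve, is affine over $\Spec R_0$. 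Hence $p$ is formally smooth.

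Part (4) follows by the same argument with $S=\Spec(k)$ and the section given by $x\in C(k)$ (note that $C_R-x_R$ is again affine), which gives formal smoothness of $p_x\colon\Gr_{\cG,x}\to\Bun_\cG$. The statement on tangent spaces is then the special case $R=k[\epsilon]/(\epsilon^2)\to R_0=k$, $I=(\epsilon)$: by Proposition \ref{BunG} a tangent vector to $\Bun_\cG$ at $\cP$ is a class $\xi\in H^1(C,\cP\times^\cG\Lie(\cG))$, represented by a deformation $\cP_\epsilon$ of $\cP$ over $C_{k[\epsilon]}$, and formal smoothness of $p_x$ provides an extension over $k[\epsilon]$ of the trivialization of $\cP$ on $C-x$, hence a point of $\Gr_{\cG,x}(k[\epsilon])$ lying over $P\in\Gr_{\cG,x}(k)$ and mapping to $\xi$. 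Thus $dp_P\colon T_{\Gr_{\cG,x},P}\tto H^1(C,\cP\times^\cG\Lie(\cG))$ is surjective.

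The step I expect to be the main obstacle is making parts (1)--(2) rigorous: one has to set up the Beauville--Laszlo gluing and the \'etale-local triviality of $\cG$-torsors on $\widehat{C_{s_R}}$ carefully in the relative, noetherian setting, taking \cite{Pappas-Rapoport} Theorem 1.4 and Proposition \ref{LG} as the model. Once the two moduli descriptions are established, parts (3) and (4) are routine infinitesimal deformation theory whose only genuine input is the affineness of the complement of the section in a relative curve.
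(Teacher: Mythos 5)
Your argument is correct, but for parts (1)--(2) it takes a genuinely different route from the paper. The paper stays with plain fpqc descent along the (faithfully flat, because $R$ is noetherian) covering $\widehat{(C_R)}_{s_R}\cup(C_R-s_R)\to C_R$; the price is that a descent datum lives on the full self-product, whose component $D_R\times_{C_R}D_R$ is strictly bigger than $D_R$, and the paper resolves this with the push-out Lemma \ref{push-out} (extending $\pr_1^*g\cdot\pr_2^*g^{-1}$ over $D_R\times_{C_R}D_R$) and then descends not the torsor itself but the vector bundles $\cE,\cI$ of Assumption \ref{Annahme}(2), recovering the torsor by a Tannaka-type argument. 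Your Beauville--Laszlo route bypasses both of these points, since the BL statement is an equivalence onto a fibre product of categories (object on the disc, object on $C_R-s_R$, isomorphism on the punctured disc) and so no cocycle on $D_R\times_{C_R}D_R$ is ever needed; this is exactly the alternative the paper alludes to when it mentions \cite{BeauvilleLaszlo}. The one place where your write-up is thinner than it looks is the sentence ``since $\cG$ is affine a $\cG$-torsor is an affine scheme over $C_R$, so the gluing lemma of \cite{BeauvilleLaszlo} produces a $\cG$-torsor'': the cited lemma is stated for modules on which the local equation of $s_R$ acts regularly, so to glue torsors you must either use its compatibility with tensor products to glue the (flat, hence regular) structure algebras together with the $\cG$-action and check the torsor property after a faithfully flat base change, or reduce to vector bundles via a faithful representation, which is precisely what the paper's Assumption \ref{Annahme}(2) is for; also the gluing is a priori local on $C_R$ (BL works with one affine chart and one equation $f$), so a Zariski gluing step over $C$ should be mentioned. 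These are standard amplifications, not gaps. Your proofs of (3) and (4) coincide with the paper's: lifting the trivialization is lifting a section of the smooth scheme $\cP|_{\mathring{C}_R}$ over the affine base $\mathring{C}_{R_0}$ across a square-zero thickening, with obstruction in an $H^1$ of a quasi-coherent sheaf on an affine scheme, and the tangent-space statement follows from the deformation-theoretic description of $T\Bun_\cG$ in Proposition \ref{BunG} exactly as in the paper.
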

\begin{proof}
First of all, given a $\cG$-torsor $P$ on $C_R$, together with trivializations on $C_R-s_R$ and $\widehat{C_{s_R}}$ the difference of the two trivializations on $\widehat{C_{s_R}}\times_{C_R} (C_R-s_R)$ is an element of $L\cG(R)$.
Furthermore if we change the trivialization on $\widehat{C_{s_R}}$ we obtain the corresponding element in $L\cG(R)$ by multiplication with an element of $L^+\cG(R)$.

Similarly, if only a trivialization on $C_R-s_R$ is given, we can \'etale locally on $R$ choose a trivialization of $P$ on $\widehat{C_{s_R}}$ and this defines an element in $\GR_\cG$ which is independent of the chosen trivialization.

To prove parts (1) and (2) of the proposition, we therefore have to give an inverse to this construction, i.e. we have to construct the map $p$.

Since $\GR_\cG$ is the inductive limit of noetherian schemes it is sufficient to
construct the map $p$ on $T$-valued points, where $T=\Spec(R)$ is a noetherian affine
$S$-scheme (for non-noetherian rings one may use the technique of Beauville-Laszlo
\cite{BeauvilleLaszlo}, but we will not use this). Furthermore, since $L\cG \to \GR_\cG$
admits local sections in the \'etale topology, it is sufficient to construct an
$L\cG^+$ equivariant map $\tilde{p}\colon L\cG(R) \to \Bun_\cG(R)$ for noetherian $S$-algebras
$R$.

Since $R$ is noetherian the morphism  $\widehat{(C_R)}_{s_R}\cup (C_R-s_R) \to C_R$
is a faithfully flat, quasi compact covering and $L\cG(R)=\cG((\widehat{(C_R)}_{s_R})\times_{C_R} (C_R-s_R))$.

To apply descent theory to this covering some care is needed\footnote{I would like to thank Y. Laszlo for
pointing out this problem}. Let us write $U:= \widehat{(C_R)}_{s_R}\cup (C_R-s_R)$. A descent datum for the trivial $\cG$-torsor on $U$ is an element of $\cG(U\times_C U)$ satisfying a cocycle condition in $\cG(U\times_C U \times_C U)$. One component of $U\times_C U$ is the selfintersection of the completion $\widehat{(C_R)}_{s_R}$ over $C$. The following lemma explains how any element of $L\cG(R)$ can be used to define a gluing cocycle on this scheme:

\begin{lemma}\label{push-out}
$R$ be a noetherian $S$-algebra. Let $C/S$ be smooth curve, $s\colon S\to C$ a section. Write
$D_R:=\widehat{(C_R)}_{s_R}$, $\mathring{C}:=C-s$ and $\mathring{D}_R:=D_R\times_{C_R}
(\mathring{C}_R)$.

Then $D_R\times_{C_R} D_R = D_R\cup_{\mathring{D}_R} D_R$ i.e., the following is a
pushout-diagram of schemes:
$$\xymatrix{
{\mathring{D}}_R \ar@{^(->}[r]\ar@{^(->}[d]^{\Delta} & D_R\ar@{^(->}[d] \\
\mathring{D}_R\times_{C_R} \mathring{D}_R \ar[r] & D_R\times_{C_R} D_R}.$$
\end{lemma}
\begin{proof}
First, the question whether $D_R\times_{C_R} D_R$ represents the push out of the above diagram is local in the Zariski topology on $S$. Moreover, the above fibred products only depend on a Zariski open neighbourhood of $s_R$ so we may further assume that $C=\Spec(A)$ is affine and that there exists $t\in A$ which is a local parameter at $s_R$.

Furthermore all schemes in the above diagram are affine, so the claim is equivalent to the dual statement that we have a
pull-back diagram of rings:
$$\xymatrix{
R((t)) & R[[t]]\ar[l] \\
R((t))\tensor_{A_R} R((t))\ar[u] & R[[t]]\tensor_{A_R} R[[t]].\ar[u]\ar[l]}$$ So we have
to show that given an element $\sum P_i(t)\tensor Q_i(t) \in R((t))\tensor_{A_R} R((t))$
such that $\sum P_i(t)Q_i(t) \in R[[t]]\subset R((t))$ we can find $p_i,q_i\in R[[t]]$
such that $\sum P_i(t)\tensor Q_i(t)=\sum p_i(t)\tensor p_i(t)$.

First note that $t\in A_R$ so we may assume that $Q_i=\sum_{n=0}^\infty  b_{i,n} t^n$
with $b_{i,0}=1\in R$. Write $P_i(t)=\sum_{n=-n_i}^\infty a_{i,n} t^n$ and do induction on
the maximal pole order $N$ of the $P_i$.  Write
\begin{align*}\sum_i P_i(t)\tensor
Q_i(t)&=\sum_{\{i|n_i=N\}} a_{i,N} t^{-N}\tensor 1 + \sum_{\{i|n_i=N\}} a_{i,N}t^{-N} \tensor
(Q_i-1)\\&+ \sum_{\{i|n_i=N\}} (P_i-a_{i,N}t^{-N}) \tensor Q_i + \sum_{\{i|n_i<N\}} P_i
\tensor Q_i.
\end{align*}

 Since $\sum P_i(t)Q_i(t) \in R[[t]]$ we see that $\sum_{i} a_{i,N}=0$, so the first term
 in the above sum vanishes and the other terms have poles of lower order. This proves our
 claim.
\end{proof}

Given $g\in \cG(\mathring{D}_R)=L\cG(R)$, the element $\pr_1^*(g)\times
\pr_2^*(g^{-1})\in \cG(\mathring{D}_R\times_{C_R} \mathring{D}_R)$ restricts to the identity on the diagonal $\mathring{D}_R\subset \mathring{D}_R\times_{C_R} \mathring{D}_R$, which clearly extends to $D_R$. By the above lemma $\pr_1^*(g)\times \pr_2^*(g^{-1})$ therefore defines an element $g_{1,1}\in \cG(D_R\times_C D_R)$. 

We claim that this is the gluing cocycle needed to apply descent as indicated before the Lemma. Namely $\cG(U\times_C U)=\cG(D_R\times_{C_R} \mathring{C}_R) \times \cG(D_R\times_{C_R} D_R) \times \cG(\mathring{C}_R\times_{C_R} D_R) \times \cG(\mathring{C}_R)$ and we have constructed the element $(g,g_{1,1},g^{-1},1)$ in this group. It is easy to verify that this element satisfies the cocycle condition on $U\times_{C_R} U\times_{C_R} U$, for example on $D_R\times_{C_R} \mathring{C}_R \times_{C_R} D_R\cong \mathring{D}_R \times_{C_R} \mathring{D}_R$ we have $\pr_{12}^*(g)\cdot \pr_{23}^*(g^{-1})=pr_{13}^*(g_{1,1})$ because $\pr_{13}$ is just the inclusion $\mathring{D}_R \times_{C_R} \mathring{D}_R\to {D}_R \times_{C_R} {D}_R$. This example already shows that the cocycle condition forces us to use the element $g_{1,1}\in \cG(D_R\times_{C_R} D_R)$.

In particular if $\cG = \GL_n \times C$, then $\Bun_\cG$ is the stack of vector bundles
on $C$ and thus fpqc-descent for vector bundles gives (1) and (2).

For general $\cG$ we use our Assumption \ref{Annahme} (2) in order to apply a Tannaka type argument:
Consider the exact sequence of $\cA$-comodules
\begin{equation}\label{sequenz} \cI \tensor \Sym^\bullet \cE \to \Sym^\bullet\cE \to \cA \to 0.
\end{equation}
In particular the representation $\rho$ defines a map $L\cG(R)\to L\GL_n(R)$ and thus we get gluing
cocycles for the vector bundles $\cE$ and $\cI$ on our faithfully flat covering.
Therefore this data defines vector bundles $\cE_R$, $\cI_R$ on $C_R$ together
with a map $ \cI_R \tensor \Sym^\bullet \cE_R \to \Sym^\bullet\cE_R$. The cokernel
$\cA^\prime_R$ of this morphism is an algebra on $C\times R$, which is locally isomorphic
to $\cA_R$ and the $\cA_R$-comodule structure also descends to $C_R$. Thus
$\Spec(\cA^\prime_R)$ is a $\cG$-torsor on $C_R$. Note that we only assumed that the
sequence \ref{sequenz} exists locally on $C$, so this construction defines a $\cG$
torsor on an open neighbourhood $U_s$ of the section $s$, together with a trivialization
on $U_s-s$. Since the trivial torsor extends canonically to $C-s$ this
is sufficient to construct $\tilde{p}$. This proves (1) and (2).

Part (3) and (4) of the proposition follow from (2) by the lifting criterion for formal smoothness: Again, since
$\Bun_\cG$ is locally noetherian, we may restrict to noetherian $S$-algebras $R$. Let
$I\subset R$ be a nilpotent ideal and let $\cP$ be a $\cG$-torsor on $C_R$. By (2), a
preimage of $\cP|C_{R/I}$ under $p$ is given by a section of
$\cP|\mathring{C}_{R/I}$. Since $\cP \to C_R$ is smooth and $\mathring{C}_R$ is affine we
can apply the lifting criterion to $\mathring{C}_{R/I}$ to obtain that any such section
can be extended to a section over $\mathring{C}_R$.
\end{proof}

The following is an application of the key observation 9.3 in \cite{Pappas-Rapoport}:
\begin{lemma}\label{Tangentialraum_hochheben}
Assume that $\cG_{k(C)}$ is simply connected. Let $x\in C$ be a closed point and denote by $K_x$ the corresponding local field. Finally let $g\in \Gr_{\cG,x}(\overline{k})$ be a geometric point.

For any finite dimensional subspace $V\subset T_{(\Gr_{\cG,x}),g}$ there exist a map $f\colon \bA_\kbar^n\to L\cG_x\to \Gr_{\cG,x}$ which induces a surjection $df\colon  \bA_\kbar^n \tto V$.
\end{lemma}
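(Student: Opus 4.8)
The plan is to reduce first to the base point $g=e$ of $\Gr_{\cG,x}$ and then to realize the elements of $V$ as velocity vectors at the origin of explicit morphisms $\bA^1_\kbar\to L\cG_x$ built from root subgroups of $\cG_{K_x}$, which I then assemble into a single map by multiplying in the loop group.

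\noindent\emph{Reduction to the base point.} Since $\kbar$ is separably closed and $L\cG_x\to\Gr_{\cG,x}$ is an $L^+\cG_x$-torsor which is trivial in the \'etale topology (Proposition \ref{LG}), the point $g$ lifts to some $\tilde g\in L\cG_x(\kbar)$. Left multiplication by $\tilde g$ is an automorphism of $L\cG_x$ which descends to an automorphism $\ell_{\tilde g}$ of $\Gr_{\cG,x}$ sending the base point $e$ to $g$, and whose differential identifies $T_{\Gr_{\cG,x},e}$ with $T_{\Gr_{\cG,x},g}$. So it is enough to treat the base point: for any finite dimensional $W\subseteq T_{\Gr_{\cG,x},e}$ we must produce a morphism $\tilde f\colon\bA^m_\kbar\to L\cG_x$ with $\tilde f(0)=e$ whose composite with $L\cG_x\to\Gr_{\cG,x}$ has differential at $0$ with image containing $W$. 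Applying this with $W:=(d\ell_{\tilde g})_e^{-1}(V)$, composing with $\ell_{\tilde g}$, and finally precomposing with a linear surjection $\bA^{\dim V}\to\bA^m$ that cuts $V$ out exactly then yields an $f$ as required.

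\noindent\emph{The tangent space and the spanning statement.} From the deformation-theoretic description already used in the proof of Proposition \ref{Uniformisierungs_Abbildung} one has $T_{\Gr_{\cG,x},e}=\Lie(L\cG_x)/\Lie(L^+\cG_x)=\cg_{K_x}/\Lambda_x$, where $\cg_{K_x}=\Lie(\cG)\otimes_{\cO_C}K_x$ and $\Lambda_x=\Lie(\cG)\otimes_{\cO_C}\Ox$ is the parahoric lattice. Now $\cG_{K_x}$ is semisimple and simply connected, and this is precisely the setting of the key observation 9.3 of \cite{Pappas-Rapoport}. Choosing a maximal split torus $\cS\subset\cG_{K_x}$ with centralizer $\cT=Z_{\cG_{K_x}}(\cS)$ and writing $U_{(a)}$ for the root subgroup attached to a relative root $a$, that observation provides a spanning of $\cg_{K_x}$ over $\kbar$ by $\Lambda_x$ together with (i) the root spaces $\cu_{(a)}=\Lie(U_{(a)})$, and (ii) the coroot directions in $\Lie(\cT_{K_x})$ coming from the rank-one subgroups $\langle U_{(a)},U_{(-a)}\rangle$; it is here that simple connectedness is used, to ensure these coroot directions already span $\Lie(\cT_{K_x})$. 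Hence $W$ is spanned modulo $\Lambda_x$ by finitely many vectors of types (i) and (ii), and it suffices to realize each of them by one curve.

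\noindent\emph{The elementary curves and the assembly.} For a vector $e_{(a)}\in\cu_{(a)}$: as $U_{(a)}$ is, as a $K_x$-scheme, an affine space through the identity, the one-parameter family $s\mapsto u_{(a)}(s\,e_{(a)})$ (in a linear coordinate on $U_{(a)}$) defines a morphism $\bA^1_\kbar\to L\cG_x$ through $e$ with velocity $e_{(a)}$ at the origin. For a coroot direction $\eta$ coming from $a$: fixing $q$ in the pertinent field and forming the conjugate $s\mapsto u_{(a)}(q)\,u_{(-a)}(s)\,u_{(a)}(-q)$ gives again a morphism $\bA^1_\kbar\to L\cG_x$ through $e$, whose velocity at the origin is $\Ad(u_{(a)}(q))(e_{(-a)})=e_{(-a)}+q\,\eta-q^2 e_{(a)}$ by the $\SL_2$-computation; since $e_{(\pm a)}$ are of type (i) and hence already accounted for, varying $q$ sweeps out all of $\kbar\cdot\eta$ modulo $\Lambda_x$. (For a multipliable relative root the rank-one subgroup is of $\SU_3$-type and the analogous, slightly longer, computation gives the same conclusion.) Enumerating curves $\gamma_1,\dots,\gamma_r\colon\bA^1_\kbar\to L\cG_x$ through $e$ so that the classes of the $d\gamma_i(0)$ span $W$ modulo $\Lambda_x$, the product $\tilde f\colon(s_1,\dots,s_r)\mapsto\gamma_1(s_1)\cdots\gamma_r(s_r)$ is a morphism $\bA^r_\kbar\to L\cG_x$ through $e$ whose differential at $0$ sends $\partial_{s_i}$ to $d\gamma_i(0)$; composing with $L\cG_x\to\Gr_{\cG,x}$, the image of its differential contains $W$, and the reductions of the first paragraph finish the argument.

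\noindent\emph{Main obstacle.} The real content, and the step needing care, is the structural input taken from \cite{Pappas-Rapoport} 9.3 in the twisted, ramified case: that the relative root subgroups of $\cG_{K_x}$ are honest affine spaces over $K_x$ with linear coordinates, that conjugating the opposite root curve by a root element sweeps out the associated coroot directions in $\Lie(\cT_{K_x})$ (including in the $\SU_3$-type case and in small characteristic), and that, thanks to simple connectedness, the root spaces together with these coroot directions span $\cg_{K_x}$ modulo the parahoric lattice $\Lambda_x$. Granting this, the reduction to $g=e$, the identification of the tangent space, and the assembly of the individual curves into a single map from an affine space are all routine.
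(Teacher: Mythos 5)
Your argument is correct in substance and runs structurally parallel to the paper's: reduce to the base point by transitivity of $L\cG_x$, use quasi-splitness of $\cG_{\kbar((t))}$ and the decomposition into root directions plus torus directions, dispose of the root directions because root groups are affine spaces, and make the torus directions the crux, resolved by rank-one $\SL_2$/$\SU_3$ computations with simple connectedness guaranteeing the maximal torus is induced and generated by the rank-one subgroups. The one genuine difference is the device for the torus step: the paper stays at the level of group elements, lifting the $\kbar[\epsilon]/\epsilon^2$-point through the big cell $U^-\times T\times U$ and using the explicit product identities of \cite{Pappas-Rapoport} 9.3 (formula (9.13) in the $\SU_3$ case) to write a torus-valued point as a product of unipotent points, each of which spreads out over $\bA^1$; you instead linearize and produce the Cartan directions as velocities of conjugated opposite-root curves, via $\Ad(u_a(q))e_{-a}=e_{-a}+q\,\eta-q^2e_a$. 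Your version is arguably more geometric, while the paper's avoids any discussion of the tangent space of the quotient and of Lie-algebra spanning. Two cautions: what you cite as ``PR 9.3'' is the group-level lifting statement the paper uses directly, not the Lie-algebra spanning claim you attribute to it, so that input must really be extracted from the same rank-one identities; and ``coroot directions span $\Lie(\cT_{K_x})$'' is false if read literally in the non-split case --- what you need (and what your construction in fact delivers, since $q$ ranges over the coordinates of the root group, i.e.\ over $L_a\otimes\kbar((t))$ rather than over $\kbar$) is that the full Lie algebras of the rank-one tori $\Res_{L_a/K_x}\bG_m$ sum to $\Lie(\cT_{K_x})$, which is exactly where simple connectedness and the $\SU_3$-type computation in small characteristic enter; your ``main obstacle'' paragraph defers precisely this content, in roughly the same way the paper defers to (9.13).
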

\begin{proof} 

Since by definition of $\Gr_x$ the group $L\cG_x$ acts transitively on $\Gr_x$ we may assume that $g=1\in \Gr_x$.

Let $v$ be an element of $T_{\Gr_x,1}$ i.e. $v\in \Gr_x(k[\epsilon]/\epsilon^2)$. Since the morphism $L\cG_x \to \Gr_x$ admits sections in the \'etale topology, we may choose a preimage $\tilde{v}\in L\cG(k[\epsilon]/\epsilon^2)$ with $\tilde{v}\equiv 1 \mod \epsilon$. In order to lift $\tilde{v}$ to a morphism of $\bA^1 \to L\cG_x$ we apply the same reductions as in \cite{Pappas-Rapoport} Section 8.e.2 and 9.a:

Since $\cG$ is simply connected, there exist finite extensions $K_i/K_x$ such that we can write $\cG_{K_x}=\prod \text{Res}_{K_i/K_x} G_i$ where $G_i$ are absolutely simple and simply connected groups over $K_i$. In particular to prove the Lemma we may assume that $\cG_{K_x}$ is absolutely simple.

We may assume that $k=\kbar$ is algebraically closed. Then the group $\cG_{K_x}$ is automatically quasi-split (\cite{Steinberg} p.78, last paragraph or \cite{Landvogt} Proposition 10.1). In this case, as in the construction of the Bruhat-Tits group scheme, we have an open embedding $U^- \times T \times  U \to \cG_{K_x}$, where $T$ is a maximal torus of $\cG_{K_x}$ and $U,U^-$ are products of root subgroups. Since $L\cG_x(k[\epsilon]/(\epsilon^2))=\cG_x(k[\epsilon]/(\epsilon^2)((t)))$ and $\tilde{v} \mod \epsilon\in U^-\times T\times U$ we know that $\tilde{v}\in U^-\times T\times U\subset \cG_{K_x}$.

Since $U$ and $U^-$ are affine spaces it is sufficient to show that every element $T(k[\epsilon]/(\epsilon^2)((t)))$ can be lifted to an element of $\cG_x(k[\epsilon]((t)))$.

Now $\cG_x$ is simply connected and therefore $T$ splits into a product of induced tori. Thus, as in \cite{Pappas-Rapoport} proof of 9.3 we may reduce ourselves to the case $\cG=\SL_2$ or $\cG=\SU_3$.  If $\cG=\SL_2$ the formula:
$$\mat c 0 0 {c^{-1}} = \mat 1 c 0 1 \mat 1 0 {-c^{-1}} 1 \mat 1 c 0 1 \mat 0 {-1} 1 0 $$
shows that any element of $T(k[\epsilon]/(\epsilon^2)((t)))$ can be lifted to an element of $\SL_2(k[\epsilon]((t)))$.
Similarly if $\cG=\SU_3$ is the unitary group for a quadratic extension $K^\prime_x/K_x$ formula (9.13) in \cite{Pappas-Rapoport} (we denote the generator of $\Gal(K^\prime_x/K_x)$ by $\overline{\,\cdot\,}$):
$$\dmat {-d} 0 0 0 {\frac{\overline{d}}{d}} 0 0 0 {-\frac{1}{\overline{d}}} = 
\dmat 1 {-c} {-d} 0 1 {\overline{c}} 0 0 1
\dmat 1 0 0 {\frac{\overline{c}}{\overline{d}}} 1 0 {-\frac{1}{\overline{d}}} {-\frac{c}{d}} 1
\dmat 1 {-\frac{c\overline{d}}{d}} {-d} 0 1 {\frac{\overline{c}d}{\overline{d}}} 001
\dmat 0 0 1 0 {-1} 0 1 0 0  $$
shows the claim in this case.
\end{proof}
\begin{remark}\label{Affin_ueber_Basis}
Note, that if in the above construction the groups $\cG,T,U$ and the decomposition of $T$
into a product of induced tori are  defined over a ring $R$ instead of a field $k$, then
the map $\bA^n\to L\cG_x$ will also be defined over $R$.
\end{remark}

\begin{corollary}\label{Glatte_Umgebungen}
Assume that $\cG_{k(C)}$ is simply connected. Let $\cP\in \Bun_{\cG}(\overline{k})$ be a $\cG$-torsor which lies in the image of the
map $\Gr_{\cG,x}\to \Bun_\cG$. Then there  exists a smooth neighbourhood $p\colon U_\cP\to
\Bun_\cG$ of $\cP$ such that $U_\cP\subset \bA^n$ and the map $p$ can be lifted to
$\tilde{p}\colon U_\cP \to \Gr_{\cG,x} \to \Bun_\cG$.
\end{corollary}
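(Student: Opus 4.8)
The plan is to build the chart directly as $q:=p_x\circ f$, where $f$ is a morphism out of an affine space produced by Lemma~\ref{Tangentialraum_hochheben}, and then to restrict $q$ to the open locus on which it is smooth.

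First I would fix a point $P\in\Gr_{\cG,x}(\kbar)$ with $p_x(P)=\cP$, which exists by hypothesis. By Proposition~\ref{Uniformisierungs_Abbildung}(4) the differential $dp_P\colon T_{\Gr_{\cG,x},P}\to H^1(C,\cP\times^\cG\Lie(\cG))$ is surjective, so I may choose a finite-dimensional subspace $V\subset T_{\Gr_{\cG,x},P}$ with $dp_P(V)=H^1(C,\cP\times^\cG\Lie(\cG))$. Since $L\cG_x$ acts transitively on $\Gr_{\cG,x}$, Lemma~\ref{Tangentialraum_hochheben} — applied after translating $P$ to the base point of $\Gr_{\cG,x}$ and then translating back by the same element of $L\cG_x$ — provides a morphism $f\colon\bA^n_\kbar\to L\cG_x\to\Gr_{\cG,x}$ with $f(0)=P$ and $df_0\colon\kbar^n\tto V$. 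Putting $q:=p_x\circ f\colon\bA^n_\kbar\to\Bun_\cG$, we get $q(0)=\cP$, and $dq_0=dp_P\circ df_0$ is surjective onto $H^1(C,\cP\times^\cG\Lie(\cG))=T_\cP\Bun_\cG$.

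Next I would show that $q$ is smooth on an open neighbourhood of $0$. As in the last paragraph of the proof of Proposition~\ref{BunG}, since $C$ is a curve the formation of $\bR^1 p_*$ commutes with base change, so the Kodaira--Spencer map of $q$ is a morphism of vector bundles on $\bA^n$ whose fibre at $a\in\bA^n$ is $dq_a\colon T_a\bA^n\to H^1(C,q(a)\times^\cG\Lie(\cG))$; hence the locus $U_\cP:=\{\,a:dq_a\text{ is surjective}\,\}$ is open, and it contains $0$ by the previous step. Because $H^2(C,-)=0$ the deformation theory of $\Bun_\cG$ is unobstructed, so a morphism from a smooth scheme to $\Bun_\cG$ whose Kodaira--Spencer map is surjective at a point is smooth there (this is exactly the versality statement used in the proof of Proposition~\ref{BunG}). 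Thus $p:=q|_{U_\cP}\colon U_\cP\to\Bun_\cG$ is smooth with $\cP$ in its image — shrinking $U_\cP$ if necessary we may take it affine — and by construction $p$ factors as $\tilde p\colon U_\cP\xrightarrow{\,f\,}\Gr_{\cG,x}\to\Bun_\cG$, which is the asserted lift.

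The one point deserving care is the implication ``surjective Kodaira--Spencer map $\Rightarrow$ smooth morphism'': one must keep track that the relevant tangent space of the stack $\Bun_\cG$ at $\cP$ is $H^1(C,\cP\times^\cG\Lie(\cG))$, with the $H^0$ accounting only for infinitesimal automorphisms, and that the vanishing of $H^2(C,-)$ kills the obstructions — precisely the bookkeeping already carried out for versality in the proof of Proposition~\ref{BunG}. If one prefers to avoid invoking that directly, the same conclusion follows by base changing $q$ along a smooth atlas $A\to\Bun_\cG$ with $A$ a smooth $\kbar$-scheme: the fibre product $\bA^n\times_{\Bun_\cG}A$ is then a smooth $\kbar$-scheme, the surjectivity of $dq_0$ forces the differential of $\bA^n\times_{\Bun_\cG}A\to A$ to be surjective at the point over $(0,\cP)$, and for a morphism of smooth $\kbar$-varieties surjectivity of the differential at a point is equivalent to smoothness there.
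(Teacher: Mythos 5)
Your proposal is correct and follows essentially the same route as the paper: pick a preimage of $\cP$ in $\Gr_{\cG,x}(\kbar)$, use the formal smoothness of $p_x$ (Proposition \ref{Uniformisierungs_Abbildung}) to find a finite-dimensional $V$ surjecting onto $H^1(C,\cP\times^\cG\Lie(\cG))$, apply Lemma \ref{Tangentialraum_hochheben} to get $f\colon\bA^n\to\Gr_{\cG,x}$ with $f(0)$ the chosen preimage and surjective differential, and conclude that $p_x\circ f$ is smooth on a Zariski open neighbourhood of $0$. The extra care you take with the translation by $L\cG_x$ and with justifying ``surjective Kodaira--Spencer map at a point implies smooth near that point'' is exactly the bookkeeping the paper leaves implicit.
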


\begin{proof}
In the proof of Proposition \ref{BunG} we have seen that the tangent stack to $\Bun_{\cG}$ at $\cP$ is given by $[H^1(C,\cP\times^\cG
\Lie(\cG))/H^0(C,\cP\times^\cG \Lie(\cG))]$. By our assumption there exists a preimage
$(\cP,\phi)\in \Gr_{\cG,x}(\kbar)$ of $\cP$. Since the map $\Gr_{\cG,x} \to \Bun_{\cG}$
is formally smooth, there exist a finite dimensional subspace $V\subset
T_{\Gr_{\cG,x},(\cP,\phi)}$ such that $dp_x\colon V \to H^1(C,\cP\times^\cG \Lie(\cG))$ is
surjective. The above lemma shows, that there is a map $f\colon \bA^n \to \Gr_{\cG,x}$ such
that $f(0)=(\cP,\phi)$ and $df\colon \bA^n \to H^1(C,\cP\times^\cG \Lie(\cG))$ is surjective.
In particular the map $p\circ f\colon \bA^n \to \Bun_\cG$ is smooth at $0$. Thus there is a
Zariski open neighbourhood $U_{\cP}\subset \bA^n$ such that $p\circ f|U$ is smooth.
\end{proof}

\begin{corollary}\label{Bild offen}
Assume that $\cG_{k(C)}$ is simply connected. Then the image of the map $\pr_x\colon \Gr_x \to \Bun_\cG$ is open.
\end{corollary}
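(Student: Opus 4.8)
The plan is to read this off directly from Corollary \ref{Glatte_Umgebungen} together with the fact that smooth morphisms are open on underlying topological spaces. Write $Z:=\pr_x(\Gr_x)\subset|\Bun_\cG|$ for the image in question. It suffices to produce, through every point of $Z$, an open neighbourhood of that point contained in $Z$; then $Z$ is the union of these open sets, hence open.

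So let $\cP$ be a point of $Z$. Choosing a geometric point of $\Gr_x$ mapping to a preimage of $\cP$, Corollary \ref{Glatte_Umgebungen} furnishes a smooth morphism $p\colon U_\cP\to\Bun_\cG$ whose image contains $\cP$, together with a factorization $p=\pr_x\circ\tilde p$ through $\pr_x$, with $\tilde p\colon U_\cP\to\Gr_x$. Since $p$ is smooth it is open, so $p(U_\cP)\subset|\Bun_\cG|$ is an open neighbourhood of $\cP$; and $p(U_\cP)\subset\pr_x(\Gr_x)=Z$ because $p$ factors through $\pr_x$. As $\cP\in Z$ was arbitrary, $Z=\bigcup_{\cP\in Z}p(U_\cP)$ is open.

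The main (and rather mild) obstacle is to make this genuinely topological rather than just a statement about $\kbar$-points: one must know that Corollary \ref{Glatte_Umgebungen} is available through \emph{every} point of $Z$, not only through $\kbar$-rational ones. One way is to note that the construction behind Lemma \ref{Tangentialraum_hochheben} and Corollary \ref{Glatte_Umgebungen} is insensitive to enlarging the algebraically closed ground field (cf.\ Remark \ref{Affin_ueber_Basis}), so it may be applied over $\overline{k(\cP)}$ for each point $\cP$. Alternatively, one checks the neighbourhood condition only at closed points: writing $\Gr_x=\varinjlim_i\Gr_x^{(i)}$ with each $\Gr_x^{(i)}$ of finite type over $k$, the image $Z=\bigcup_i\pr_x(\Gr_x^{(i)})$ is a countable union of constructible subsets by Chevalley's theorem, and $|\Bun_\cG|$ is Jacobson since $\Bun_\cG$ is locally of finite type over $k$ (Proposition \ref{BunG}); as a non-empty constructible subset of a Jacobson space contains a closed point of the ambient space, it is enough to exhibit the charts $p$ through the closed points of $Z$, which is exactly the generality in which Corollary \ref{Glatte_Umgebungen} is stated. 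Everything else is immediate.
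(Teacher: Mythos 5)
Your proof is correct and is essentially the paper's own argument: the paper's proof of this corollary is exactly the one-line observation that by Corollary \ref{Glatte_Umgebungen} every point of the image admits a smooth (hence open) neighbourhood factoring through $\Gr_x$, so the image is open. Your additional care about non-closed points (field extension, or the Jacobson/constructibility reduction to closed points) is a reasonable elaboration of the same route rather than a different one.
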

\begin{proof}
By the preceding corollary we know that for each point in the image of $\pr_x$ there is a
smooth neighbourhood contained in the image.
\end{proof}

\section{Reminder on local triviality}\label{local_triviality}
As a first step in our proof of the uniformization theorem, we need to recall a theorem of Steinberg and Borel--Springer. They showed that $\cG$-bundles over a curve
$C$, defined over an algebraically closed field are locally trivial in the Zariski topology. A theorem of Harder says that the same holds over finite fields if $\cG$ is simply connected. Together with the deformation arguments from the previous section this will suffice to deduce the uniformization theorem. 

Let us recall:
\begin{theorem*}[Steinberg, Borel--Springer\cite{BorelSpringer}]
Let $k(C)$ be the function field of a curve $C$, defined over an algebraically closed field $k$ and let $G$ be a connected semisimple group scheme over $k(C)$. Then
$$H^1(k(C),G)=0.$$
\end{theorem*}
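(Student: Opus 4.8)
The plan is to reduce the statement to the case of \emph{simply connected} groups, which is the theorem being recalled, and to control the remaining defect by the triviality of the Brauer group. Two standard inputs are needed. First, by Tsen's theorem the function field $K:=k(C)$ of a curve over the algebraically closed field $k$ is a $C_1$-field; in particular $\mathrm{Br}(K)=0$, and indeed $\mathrm{Br}(L)=0$ for every finite extension $L/K$ (again the function field of a curve over an algebraically closed field), and $K$ has cohomological dimension $\le 1$ (the $p$-primary part in characteristic $p$ being accounted for by $[K:K^{p}]=p$). Second, for a semisimple \emph{simply connected} group $\tilde G$ over a field of dimension $\le 1$ one has $H^{1}(K,\tilde G)=0$; this is Serre's ``Conjecture~I'', due to Steinberg for perfect base fields and to Borel--Springer \cite{BorelSpringer} in general.

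Now let $G$ be an arbitrary connected semisimple group over $K$, let $\pi\colon\tilde G\to G$ be its simply connected central cover and $\mu:=\Ker(\pi)$, a finite $K$-group scheme of multiplicative type. Since $\mu$ is central, the short exact sequence $1\to\mu\to\tilde G\to G\to1$ yields an exact sequence of pointed sets in fppf cohomology
$$ H^{1}(K,\tilde G)\;\longrightarrow\;H^{1}(K,G)\;\stackrel{\delta}{\longrightarrow}\;H^{2}(K,\mu). $$
If $H^{2}(K,\mu)=0$, then $\delta$ is the trivial map, so by exactness at $H^{1}(K,G)$ every class in $H^{1}(K,G)$ lifts to $H^{1}(K,\tilde G)$; since the latter is a single point by the simply connected case, $H^{1}(K,G)=0$ follows.

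It remains to prove $H^{2}(K,\mu)=0$. Writing $\mu^{\vee}$ as a quotient of a permutation module over the Galois group of a splitting field, one obtains a resolution $0\to\mu\to T_{1}\to T_{2}\to0$ with $T_{1}=\prod_{i}R_{L_{i}/K}\bG_{m}$ a quasi-trivial torus and $T_{2}$ a torus. The associated long exact sequence reduces the claim to $H^{2}(K,T_{1})=0$ and $H^{1}(K,T_{2})=0$. The first equals $\prod_{i}H^{2}(L_{i},\bG_{m})=\prod_{i}\mathrm{Br}(L_{i})=0$ by Shapiro's lemma and Tsen's theorem; the second holds because $H^{1}$ of any torus over the dimension-$\le1$ field $K$ vanishes (reduce $T_{2}$ as well to a quasi-trivial resolution and apply Hilbert~90 together with the vanishing of the Brauer groups of the relevant splitting fields).

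The only delicate point --- not really an obstacle --- is that in characteristic $p>0$ the field $K$ is imperfect and $\mu$ need not be smooth (for instance $\mu_{p}$ appears when $G$ has a factor of type $A$ with $p$ dividing the order of the center), so one must work with fppf rather than \'etale cohomology throughout and invoke the Borel--Springer rather than the Steinberg form of Conjecture~I; granting that input, the reduction above is purely formal.
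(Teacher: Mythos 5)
The first thing to say is that the paper does not prove this statement at all: it is recalled as a black box with the citation to Borel--Springer, who prove it directly for every connected reductive group over a field of dimension $\le 1$ (imperfect fields included). So there is no ``paper proof'' for your argument to match; what you have written is a d\'evissage of the general semisimple case to the simply connected case, which you then take as known input. That reduction is the standard one and is sound in outline: since $\mu$ is central (and possibly non-smooth in characteristic $p$), the fppf sequence of pointed sets $H^1(K,\tilde G)\to H^1(K,G)\to H^2_{\mathrm{fppf}}(K,\mu)$ is exact, so $H^2_{\mathrm{fppf}}(K,\mu)=0$ together with $H^1(K,\tilde G)=\{\ast\}$ does give $H^1(K,G)=0$; and embedding $\mu$ into an induced torus $T_1$ and using Shapiro plus Tsen to get $H^2(K,T_1)=\prod_i\mathrm{Br}(L_i)=0$ is fine (groups of multiplicative type split over finite separable extensions, so each $L_i$ is again the function field of a curve over $k$). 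Just be aware that the quasi-split simply connected case you feed in is precisely the hard core of the theorem being quoted, so relative to the paper's citation your argument adds a reduction rather than an independent proof.

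The one step whose justification would fail as written is $H^1(K,T_2)=0$. Resolving $T_2$ by a quasi-trivial torus in the obvious direction, $0\to T_2\to P\to Q\to 0$, Hilbert 90 only exhibits $H^1(K,T_2)$ as the cokernel of $P(K)\to Q(K)$, which is not visibly zero; resolving in the other direction leads you to $H^2$ of another torus, i.e.\ back into the same circle. The vanishing of $H^1$ of an arbitrary torus over $k(C)$ is not ``purely formal'': it is a genuine theorem about fields of dimension $\le 1$. Either cite it directly (Serre \cite{SerreCohomologieGaloisienne}, the same Corollaire p.~170 that the paper itself invokes for tori over $k(C)$ in the proof of Lemma \ref{Zfin_lemma}), or prove it via a flasque resolution $1\to F\to P\to T_2\to 1$ with $P$ quasi-trivial, which gives an injection $H^1(K,T_2)\hookrightarrow H^2(K,F)$, and then kill $H^2(K,F)$ using that it is a torsion group and that $\mathrm{cd}\,k(C)\le 1$ (Tsen for the primes $\ell\neq p$, and $\mathrm{cd}_p\le 1$ automatically in characteristic $p$). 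With that repair, or simply with the citation, your reduction is complete.
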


This theorem tells us that if $\cP$ is a $\cG$ torsor on $C$, then the restriction of $\cP$ to the generic point of $C$ is trivial. Since $\cP$ is of finite type over $C$ any trivialization over the generic point will extend to a non-empty open subset $U\subset C$ i.e., we can find $U\subset C$ such that  $\cP|U$ is trivial. Finally, since  $\cG$ is smooth any trivialization over a closed point can be lifted to a trivialization over the formal completion, so for any point $x\in C(k)$ any $\cP$ torsor is trivial on the formal completion $\widehat{C_x}$.

Therefore any $\cG$-torsor $\cP$ can be obtained by gluing the trivial torsor on some open subset $U\subset C$ and the trivial torsors on the formal completions at the remaining points $x\in C-U$. In other words, denoting by $\bA_{k(C)}$ the ad\`eles of $k(C)$ we see that the map
$p_{\bA}\colon \cG(\bA_{k(C)})\to \Bun_\cG(k)$ which maps an ad\`ele $g$ to the torsor obtained from the cocycle given by $g$ is essentially surjective (recall that $\Bun_{\cG}(k)$ is a
category).

Recall furthermore that the above theorem also implies, that $\cG_{k(C)}$ is always quasi-split \cite{SerreCohomologieGaloisienne} III 2.2. (The proof of (i')$\Rightarrow$ (ii') in this reference does not use that the ground field is perfect.) 

\begin{remark} 
To prove the uniformization theorem we may always pass to an extension of the ground field, so we might always assume that $k$ is algebraically closed. However the deformation arguments of the previous section also allow to perform a reduction to the case of finite fields. In this case Harder proved in \cite{Harder_GaloiskohomologieIII} that $H^1(k(C),\cG)=0$ if $\cG$ is simply connected. It would therefore also suffice to use Harder's theorem in the following. 
\end{remark}

\section{The case of simply connected group schemes}

\begin{theorem}\label{EinfachzusammenhaengendeUniformisierung}
Let $k$ be a field, $\cG$ a simply connected parahoric Bruhat-Tits group scheme over $C$ and let $x\in C$ be a closed point.

Then, for every noetherian scheme $S$ and every family $\cP \in \Bun_\cG(S)$ there exists an \'etale covering $S^\prime \to S$ such that $\cP|(C-x)\times S^\prime$ is trivial.
\end{theorem}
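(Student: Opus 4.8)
The plan is to prove that over any algebraically closed field the morphism $p_x\colon\Gr_{\cG,x}\to\Bun_\cG$ hits every $\cG$-torsor, and to bootstrap from this to an arbitrary noetherian $S$ by smoothness. Granting this surjectivity, let $\cP\in\Bun_\cG(S)$ and $s\in S$; the geometric fibre $\cP_{\bar s}$ then lies in the image of $p_x$, so by Corollary~\ref{Glatte_Umgebungen} there is a smooth morphism $U\to\Bun_\cG$ with $U$ a Zariski-open in some $\bA^n$, passing through $\cP_{\bar s}$ and factoring as $U\to\Gr_{\cG,x}\to\Bun_\cG$. Since $\Bun_\cG$ has representable diagonal (Proposition~\ref{BunG}), $S\times_{\Bun_\cG}U$ is a scheme, smooth over $S$, whose image contains $s$; as a smooth surjection admits sections étale-locally, after passing to an étale cover $S'\to S$ over a neighbourhood of $s$ we obtain a lift of $\cP|_{S'}$ to $\Gr_{\cG,x}(S')$, i.e. (Proposition~\ref{Uniformisierungs_Abbildung}(2)) a trivialization of $\cP$ on $(C-x)\times S'$. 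Covering $S$ proves the theorem, so we may assume $S=\Spec k$ with $k$ algebraically closed.

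\textbf{The core case.}
So let $\cP$ be a $\cG$-torsor on $C$ over an algebraically closed field $k$. By the theorem of Steinberg and Borel--Springer recalled above, $\cP$ is trivial over the generic point of $C$, hence over a dense open; and as $\cG$ is smooth with connected fibres, $\cP$ is also trivial over the formal disc at every closed point. Hence $\cP$ is obtained from the trivial torsor by finitely many (relative) Hecke modifications, at a finite set of points which we may enlarge so as to contain $x$; since each such modification is parametrized by the twisted affine flag variety of $\cG$ at the relevant point, which is connected because $\cG$ is simply connected, $\cP$ is joined to the trivial torsor by a chain of connected families in $\Bun_\cG$ and therefore lies in the connected component $\Bun_\cG^{\circ}$ of the trivial torsor. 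On the other hand, the image of $p_x$ in $\Bun_\cG$ is: open by Corollary~\ref{Bild offen} (whose proof rests on the ``key observation'' Lemma~\ref{Tangentialraum_hochheben}); connected, since $\Gr_{\cG,x}$ is connected and surjects onto it; and closed, because the bounded Schubert subvarieties of $\Gr_{\cG,x}$ are proper over $k$ and map to closed substacks of $\Bun_\cG$ (a standard consequence of properness together with a boundedness argument), so any specialization of a point of $\operatorname{Im}(p_x)$ is already reached inside one such bounded piece, by quasi-compactness, and again lies in $\operatorname{Im}(p_x)$. Thus $\operatorname{Im}(p_x)$ is a non-empty clopen connected substack of $\Bun_\cG$ containing the trivial torsor, so $\operatorname{Im}(p_x)=\Bun_\cG^{\circ}\ni\cP$; that is, $\cP|_{C-x}$ is trivial. (Over a finite ground field one may instead invoke Harder's theorem \cite{Harder_GaloiskohomologieIII} that $H^1(k(C),\cG)=0$ for simply connected $\cG$, to the same effect.)

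\textbf{Main obstacle.}
The delicate ingredients are those concerning the twisted affine flag varieties of the Bruhat--Tits group scheme $\cG$ — that for simply connected $\cG$ they are connected, and that their bounded Schubert subvarieties are proper over the base — both at the unramified points and across the ramification locus $\Ram(\cG)$, and in the ``relative'' versions needed to describe iterated Hecke modifications; these are the analogues for non-constant group schemes of results of Pappas--Rapoport \cite{Pappas-Rapoport}. One also has to take some care with the non-separatedness of $\Bun_\cG$ when deducing closedness of $\operatorname{Im}(p_x)$, where one uses that its diagonal is at least separated and affine. By contrast the reductions in the first paragraph and the use of Steinberg--Borel--Springer are routine, and the openness of the image — exactly the place where the classical argument appealed to strong approximation — is already in hand via Corollary~\ref{Bild offen}.
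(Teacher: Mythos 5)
Your reduction of the general noetherian $S$ to a geometric point via Corollary \ref{Glatte_Umgebungen} and the representability of the diagonal is exactly the paper's final step and is fine, as is the observation that $\cP$ lies in the connected component of the trivial torsor. The gap is the claim that $\im(p_x)$ is \emph{closed} because bounded Schubert varieties are proper and "map to closed substacks of $\Bun_\cG$". This is false as stated: $\Bun_\cG$ is not separated, and an affine (hence separated) diagonal is not enough to make images of proper schemes closed — for that you would need the diagonal to be proper, i.e. $\Bun_\cG$ separated. Concretely, already for $\cG=\SL_2\times C$ take a nonsplit extension $E$ of $\cO(x)$ by $\cO(-x)$; scaling the extension class gives a family over $\bA^1$ which is isomorphic to $E$ for $t\neq 0$ and to $\cO(x)\oplus\cO(-x)$ at $t=0$, so the image of the proper scheme $\Spec k$ given by $[E]$ is not closed in $\Bun_{\SL_2}$. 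The same phenomenon defeats the valuative-criterion argument you sketch: lifting the generic point of a test DVR into some proper $S_w$ and taking the limit there produces \emph{a} limit in $\im(p_x)$, but because $\Bun_\cG$ is not separated this limit need not agree with the special fibre of the original family (the $\Isom$-scheme is only affine over the DVR, so the isomorphism over the generic point need not extend). Since a posteriori the image is the whole connected component, its closedness is essentially equivalent to the surjectivity you are trying to prove, so it cannot be fed in as an a priori input; without it, "open and connected containing the trivial torsor" does not give the whole component.

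The paper closes exactly this hole by a different argument that never uses closedness. It shows that all the maps $\pr_y$, $y\in C$, have the \emph{same} open image: given $\cP\in\im(\pr_y)$, one twists by the inner form $\cG_\cP=\Aut_\cG(\cP/C)$ (again a simply connected parahoric Bruhat--Tits group scheme), identifies $\Bun_{\cG_\cP}\cong\Bun_\cG$, and observes that $\im(\pr_x^{\cP})$ and $\im(\pr_x)$ are two nonempty open substacks of the same connected — hence, by smoothness, irreducible — component, so they meet; a common point $\cP'$ gives $\cP|_{C-x}\cong\cP'|_{C-x}\cong\cG|_{C-x}$, i.e. $\cP\in\im(\pr_x)$. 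Then the Steinberg--Borel--Springer gluing description (your Hecke-modification chain) is run as an induction on the gluing points, moving each modification to $x$ one at a time, which yields surjectivity of $\pr_x$ on $\overline k$-points. If you replace your "open + closed + connected" step by this twisting-and-induction argument, the rest of your proof goes through.
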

\begin{proof}
By Proposition \ref{Uniformisierungs_Abbildung} and Corollary \ref{Bild offen} we know
that the map $\pr_x\colon \Gr_x\to \Bun_\cG$ is formally smooth with open image. Therefore we want to show that
this map is also essentially surjective on $\overline{k}$-valued points.

We claim that for all points $y\in C$ the image of $\pr_y$ coincides with the image of
$\pr_x$. First, since $\Gr_{\cG_x}$ is connected (\cite{Pappas-Rapoport} Theorem 0.1) we know that both images lie in the connected component of the trivial bundle in $\Bun_\cG$. Now let $\cP$ be a bundle which lies in the image of $\pr_y$.
Denote by $\cG_\cP:= \Aut_{\cG}(\cP/C)$ the group scheme of automorphisms of $\cP$ over
$C$, which is \'etale locally isomorphic to $\cG$. In particular this is again a simply connected Bruhat-Tits group scheme over $C$, which is of the same type as $\cG$. Furthermore $\Bun_{\cG_\cP}\cong
\Bun_\cG$, the isomorphism being given by $\cQ \mapsto \cQ\times^{\cG_\cP} \cP$ (here $\cG_P$ acts on the right on $\cQ$ and on the left on $\cP$, commuting with the right action of $\cG$ on $\cP$, so that the quotient $\cQ\times \cP/\cG_\cP$ is a (right) $\cG$ torsor on $C$).
Consider the map $\pr_x^{\cP}\colon  \Gr_{\cG_\cP,x} \to \Bun_{\cG_\cP} \cong \Bun_\cG$. Its
image consists of those $\cG$-bundles which are isomorphic to $\cP$ over $C-x$. We
already know that the image of this map is open and that it is contained in the connected
component of $\cP \in \Bun_\cG$. Thus there exists a bundle $\cP^\prime\in \im(\pr_x)\cap
\im(\pr_x^\cP)$, but that means that $\cP|_{C-x} \cong \cP^\prime|_{C-x} \cong
\cG|_{C-x}$. Thus $\cP\in \pr_x$ and therefore all the maps $\pr_y$ have the same image. Moreover we have shown that given two $\cG$-bundles $\cP,\cP^\prime$ such that $\cP|_{C-y}\cong \cP^\prime|_{C-y}$ for some $y\in C$ then $\cP|_{C-x}\cong \cP^\prime|_{C-x}$ for all $x\in C$.

By the theorem of Steinberg and Borel--Springer we know that any bundle is trivial on some open subset $U\subset C$ and can thus be obtained by gluing trivial bundles on $U\subset C$ and on the formal completions of the finitely many points $C-U$.

However we have just seen, that gluing at different points does not
produce new bundles: Namely fix $x\in C$ and suppose that $\cP$ is given by gluing the trivial bundle on $U=C-\{x_1,\dots,x_n\}$ via the gluing functions $g_i\in L\cG_{x_i}(\kbar)$ for $i=1,\dots n$. Let $\cP_j$ be the bundle given by ${g_1,\dots,g_j}$ for $0\leq j\leq n$, in particular $\cP_0$ is the trivial bundle. Then we know that $\cP_j|_{C-x_j}\cong \cP_{j-1}|_{C-x_j}$ and we have shown that this implies that $\cP_j|_{C-x}\cong \cP_{j-1}|_{C-x}$. Therefore $\cP|_{C-x}\cong \cP_0|_{C-x}$. And therefore the map $\pr_x$ is surjective on $k$-valued points.

Thus Corollary \ref{Glatte_Umgebungen} can be applied to every point in the image of $S\to \Bun_\cG$,
in order to obtain a smooth covering of $S$, factoring through $\Gr_x$. Since every smooth covering has an \'etale refinement this is sufficient to prove our theorem.
\end{proof}
\begin{remark}
If $\cG$ is simply connected the affine flag manifold $\Gr_{\cG,x}$ is connected
(\cite{Pappas-Rapoport} Theorem 0.1). Therefore Theorem \ref{EinfachzusammenhaengendeUniformisierung} and Proposition
\ref{Uniformisierungs_Abbildung} (1) imply that $\Bun_\cG$ is connected if $\cG$ is
simply connected. This proves Theorem \ref{Zusammenhangskomponenten} for simply connected groups $\cG$.
\end{remark}

\section{The case of general groups}

In this section we want to deduce the general case of the uniformization theorem from the case of simply
connected groups. As in the previous section the formulation for group schemes is helpful in order to circumvent the reduction to Borel subgroups used in \cite{Drinfeld-Simpson}.

Since the statement of the uniformization theorem allows to pass to finite extensions of $k$ we may assume that $C$ is defined over an algebraically closed field $k$.

Let us begin with some preparations concerning simply connected coverings of Bruhat-Tits group schemes.
Let $\cG$ be a Bruhat-Tits group scheme over $C$ and denote the open subset of $C$ over which $\cG$ is semisimple by $U$. Then we know that over $U$ there exists a simply connected covering $p_U\colon \tilde{\cG}_U\to \cG_U$, which is a finite morphism. We denote the kernel $\cZ_U:=\ker(p_U)$. Furthermore, since $k$ is algebraically closed, the group schemes $\cG_U,\widetilde{\cG}_U$ are quasi-split.

To extend this to the whole of $C$, we consider the local problem around $x\in \Ram(\cG)$ and then glue the result, as in the proof of Proposition \ref{Uniformisierungs_Abbildung}. In the local situation we obtain a canonical extension of $\widetilde{\cG}_U$, because the Bruhat-Tits building of a group $G$ over a local field is isomorphic to the building of the simply connected of $G$ (\cite{Landvogt} 2.1.7). To use this abstract result, we need to recall some points of the construction of Bruhat-Tits.
 
Write $R:=\widehat{\cO}_{C,x}$ and $K:=K_x$ for the local field at $x$. Since $\cG_K$ is quasi-split $\cG_R$ is obtained by first extending a maximal torus $T_K\subset \cG_K$ by taking its connected N\'eron model and then extending the root subgroups $U_a\subset \cG_K$ according to the choice of a facet in $X_*(T^0_K)$, where $T^0_K\subset T_K$ is a maximal split torus in $T_K$.

The choice of $T^0_K,T_K$ determines tori $\tilde{T}^0_K,\tilde{T}_K$ in $\tilde{\cG}_K$, and we can use the same extensions of the root subgroups $U_a$ to construct $p_R\colon \tilde{\cG}_R\to \cG_R$. By construction, the kernel of $p_R$ is the kernel $\cZ$ of the map on the maximal tori $p\colon \tilde{\cT}\to \cT$. We will need some control over the group scheme $\cZ$. 

Since $\tilde{\cG}_K$ is simply connected, the torus $\tilde{T}_K$ is an induced torus, i.e. there exists a generically \'etale (possibly disconnected) extension $\Spec R^\prime \to \Spec R$ such that $\tilde{T}_K$ is the Weil-restriction of $\bG_m$ over $\Spec K\times_{\Spec R}\Spec R^\prime$. Moreover in this case the connected N\'eron model $\tilde{\cT}$ of $\tilde{T}_K$ is the Weil restriction to $\Spec R$ of $\bG_m$ on $\Spec R^\prime$ (\cite{BT2} 4.4.8). 

\begin{claim}\label{Zfin}
The closure $\cZ^{\fin}$ of $\cZ_K$ in $\tilde{\cT}$ is a finite flat group scheme over $\Spec(R)$.
\end{claim}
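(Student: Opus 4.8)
The plan is to exploit that $R=\widehat{\cO}_{C,x}$ is a complete (hence Henselian) discrete valuation ring, so that $R$-flatness coincides with $R$-torsion-freeness and quasi-finite separated $R$-schemes have a simple structure. Write $K$ for the fraction field of $R$ and let $\cZ^{\fin}\subset\tilde{\cT}$ be the schematic (flat) closure of $\cZ_K$. Since $\tilde{\cT}=\Res_{R'/R}\bG_m$ is affine over $R$ (Weil restriction along the finite morphism $\Spec R'\to\Spec R$), the subscheme $\cZ^{\fin}$ is affine and of finite type over $R$, hence separated over $R$; and it is $R$-flat, being the schematic closure of its generic fibre inside an $R$-flat scheme over a Dedekind base. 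Its generic fibre is $\cZ_K=\ker(\tilde T_K\to T_K)$, the kernel of an isogeny of tori, which is finite over $K$.

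Next I would verify that $\cZ^{\fin}\to\Spec R$ is quasi-finite, i.e.\ that the special fibre is $0$-dimensional. Write $\cZ^{\fin}=\Spec A$ with $A$ a torsion-free finite type $R$-algebra. Any $r\neq 0$ in $R$ is a nonzerodivisor on $A$, hence avoids every associated prime, so every minimal prime $\mathfrak p$ of $A$ satisfies $\mathfrak p\cap R=0$; thus $R\hookrightarrow A/\mathfrak p$, and $(A/\mathfrak p)\otimes_R K$ is a domain which is a quotient of the Artinian ring $A\otimes_R K$, hence a field. As $R$ is excellent and universally catenary this forces $\dim A/\mathfrak p=\dim R=1$, so $\dim A=1$; a uniformizer $t$ is a nonzerodivisor on each $A/\mathfrak p$, and $\Spec(A/tA)=\bigcup_{\mathfrak p}\Spec\bigl((A/\mathfrak p)/t\bigr)$ is a finite union of $0$-dimensional schemes, so the special fibre of $\cZ^{\fin}$ is finite.

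The main point is then immediate: $\cZ^{\fin}\to\Spec R$ is quasi-finite and separated over the Henselian local ring $R$, so by the structure theorem for such morphisms (a form of Zariski's Main Theorem, EGA IV$_4$, 18.5.11) one has $\cZ^{\fin}=Z_1\sqcup Z_0$ with $Z_1$ finite over $R$ and $Z_0$ having empty special fibre. But a schematic closure has no irreducible component contained in the special fibre (every component dominates $\Spec R$); since $Z_0$ is open and closed in $\cZ^{\fin}$ with empty special fibre, it must be empty. Hence $\cZ^{\fin}=Z_1$ is finite over $R$, and being $R$-flat it is finite flat. Finally $\cZ^{\fin}$ is a subgroup scheme: $\cZ^{\fin}\times_R\cZ^{\fin}$ is $R$-flat with generic fibre $\cZ_K\times\cZ_K$, so its image under multiplication in $\tilde{\cT}$ is $R$-flat with generic fibre inside $\cZ_K$, hence contained in $\cZ^{\fin}$; likewise for inversion. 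This proves the claim.

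I expect the only genuinely delicate step to be the quasi-finiteness, i.e.\ controlling the special fibre of $\cZ^{\fin}$ — once that is established, the combination of the Henselian structure theorem with the ``no vertical components'' property of a schematic closure concludes the argument with no further work. One should also keep track that the model $\tilde{\cT}=\Res_{R'/R}\bG_m$ genuinely is affine, since that is what makes $\cZ^{\fin}$ separated, a hypothesis needed to invoke the structure theorem.
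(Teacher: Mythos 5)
Your reductions (flatness of the schematic closure over the DVR, affineness and separatedness, the dimension count giving quasi-finiteness, the Henselian decomposition $\cZ^{\fin}=Z_1\sqcup Z_0$, and the flat-closure argument for the subgroup structure) are fine, but the step ``$Z_0=\emptyset$'' is a genuine gap, and it sits exactly where the content of the claim lies. That every irreducible component of a schematic closure dominates $\Spec(R)$ only says its image contains the generic point; since the ambient scheme $\tilde{\cT}=\Res_{R'/R}\bG_m$ is merely affine over $R$, not proper, this does not force the component to meet the special fibre. A component of the closure of a finite generic-fibre subscheme can be a single closed point of the generic fibre: already the closure of the $K$-point $t^{-1}$ of $\bG_{a,K}$ in $\bG_{a,R}$ is $\Spec R[x]/(tx-1)\cong \Spec K$. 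More to the point, this happens for torsion subschemes of induced tori: take $R'=R[z]/(z^2-t)$ ramified quadratic ($\mathrm{char}\,k\neq 2$) and the finite subgroup scheme $\Res_{K'/K}\mu_n\subset \tilde{T}_K$; the closed point whose geometric realizations are $(\zeta,1)$ with $\zeta\neq 1$ has coordinates $a=(\zeta+1)/2$, $b=(\zeta-1)/(2z)$ with respect to the $R$-basis $1,z$, so $b^2=(\zeta-1)^2/(4t)$ and the coordinate ring of its closure contains $t^{-1}$: that component lies entirely over the generic point. Thus your argument, which uses nothing about $\cZ_K$ beyond finiteness of the generic fibre, would prove a false statement, and no purely formal appeal to ``components dominate'' can rule out $Z_0\neq\emptyset$; note also that for the same reason your dimension computation only gives $\dim A/\mathfrak{p}\le 1$, not equality. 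You identify quasi-finiteness as the delicate step, but it is the easy half (the paper dismisses it with ``certainly quasi-finite''); the delicate half is precisely the properness you wave through.

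The paper closes exactly this gap with arithmetic input specific to $\cZ_K$: it embeds $\cZ_K$ into the $n$-torsion of the induced torus, observes that an $L$-valued point of the torsion subscheme is the same as an $L\otimes_R R'$-valued point of $\mu_n$ over $R'$, and uses properness of $\mu_n$ to extend such points integrally, i.e.\ it verifies the valuative criterion of properness for the closure; properness together with quasi-finiteness and affineness then gives finiteness, and flatness is again automatic over the DVR. The Remark following the claim, that $\Res_{R'/R}\mu_n$ itself fails to be finite in the ramified wild case, is a further warning that finiteness of $\cZ^{\fin}$ cannot come from soft structural arguments about closures alone. To repair your proof you must replace the ``$Z_0=\emptyset$ because components dominate'' step by an argument showing that the relevant (coordinates of) torsion points of $\cZ_K$ are integral over $R$ — which is, in substance, the paper's proof.
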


\begin{proof}
Since the center $\cZ_K$ is finite, it is contained in the $n-$torsion $T_K[n]\subset T_K$ for some $n$ and it is sufficient to show that the closure of $T_K[n]$ in $\cT$ is finite over $R$. It is certainly quasi-finite and to check properness, we observe that $T_K[n]$ is the Weil restriction of $\mu_n=\bG_m[n]\subset \bG_m$ over $\Spec(R^\prime)$. So if $L\supset K$ is the quotient field of a discrete valuation ring $O$, then an $L$-point of $T_K[n]$ is a $L\tensor_R R^\prime$ point of $\mu_n$. Since $\mu_n$ is proper, this extends canonically to an $O\tensor_R R^\prime$ point of $\mu_n$, which defines an $O$-point of the Weil restriction.
\end{proof}

\begin{remark}
If $\Spec R^\prime \to \Spec R$ is ramified and the characteristic of the ground field divides $n$, then the Weil restriction of $\mu_n$ on $R^\prime$ is not finite over $R$. In particular it can happen that the group scheme $\cZ$ is only generically finite. However we also see that $\cZ=\cZ^{\fin}$ is finite and flat if either $R^\prime$ is an unramified extension of $R$ or the characteristic of the base field does not divide the order of $\pi_1(\cG)$.
\end{remark}

In order to make some global computations, we note that $\cG_U$ being quasi-split implies that we can find a Borel subgroup $\cB_U\subset \cG_U$ so the quotient of $\cB$ by its unipotent radical is a torus $\cT_U$ over $U$. We can consider its connected N\'eron model $\cT$ over $C$ and the same holds for the simply connected covering $\tilde{\cG}$, so we find an exact sequence $0\to \cZ\to \tilde{\cT}\to \cT$.
Here, we need to note that $\cT$ need not be contained in $\cG$. However, since the maximal tori of $\cG_{K_x}$ are conjugated, locally around any point $x\in C$ the torus $\cT_{\widehat{\cO}_{C,x}}$ is isomorphic to the torus used in the Bruhat-Tits construction. Furthermore, since $\cZ_R$ is contained in the center of $\tilde{\cG}_R$ the kernel $\cZ$ of $\tilde{\cT}\to \cT$ is independent of the choice of the maximal torus over $R$, so that the kernel of $\tilde{\cT}\to \cT$ is indeed isomorphic to the kernel of $\widetilde{\cG}\to \cG$.

Denote by $\cT^{\fin}:=\tilde{\cT}/\cZ^{\fin}$, which is a smooth group scheme, generically isomorphic to $\cT$ and the sequence 
\begin{equation}\label{fin}
0 \to \cZ^{\fin} \to \tilde{\cT} \to \cT^{\fin} \to 0
\end{equation}
defines an exact sequence of fppf-sheaves of groups. In particular this induces a long exact sequence of cohomology groups.

Similarly we define $\cG^{\fin}:=\tilde{\cG}/\cZ^{\fin}$ and again we get a short exact sequence:
$$ 0 \to \cZ^{\fin} \to \tilde{\cG} \to \cG^{\fin} \to 0.$$

Moreover we claim:
\begin{lemma}\label{Zfin_lemma}
\begin{enumerate}
\item The natural morphism $H^1(C,\cT^{\fin}) \to H^1(C,\cT)$ is surjective. The map $\Bun_{\cT^{\fin}} \to \Bun_\cT$ is smooth, surjective, with connected fibres.
\item Given a family $\cP$ of $\cT$-torsors, parametrized by a connected scheme $S$, the obstruction to lift $\cP$ to a family of $\widetilde{\cT}$-torsors fppf-locally on $S$ is constant on $S$.
\item Let us denote $\overline{H^2(C,\cZ^{\fin})}:=H^2(C,\cZ^\fin)/H^0(C,\cT/\cT^\fin)$, where $H^0(C,\cT/\cT^{fin})\to H^1(C,\cT^{fin})\to H^2(C,\cZ^\fin)$ is a composition of boundary maps.
Then there is an exact sequence $H^1(C,\tilde{\cT}) \to H^1(C,\cT) \to \overline{H^2(C,\cZ^{\fin})}$.
\item Parts (1)-(3) also hold if we replace $\cT,\cT^{\fin},\tilde{\cT}$ by $\cG,\cG^{\fin},\tilde{\cG}$.
\end{enumerate}
\end{lemma}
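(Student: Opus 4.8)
The plan is to reduce the whole lemma to two exact sequences of fppf sheaves on $C$ together with a single cohomological vanishing. The first sequence is the one defining $\cT^\fin$,
$$0 \to \cZ^\fin \to \tilde\cT \to \cT^\fin \to 0,$$
which is an honest short exact sequence of fppf sheaves precisely because $\cZ^\fin$ is finite flat (Claim \ref{Zfin}) -- this is the entire point of replacing $\cZ$ by $\cZ^\fin$. The second is the comparison map $\varphi\colon \cT^\fin = \tilde\cT/\cZ^\fin \to \cT$ induced by $\cZ^\fin \subset \cZ = \ker(\tilde\cT \to \cT)$: over $U := C \setminus \Ram(\cG)$ one has $\cZ^\fin|_U = \cZ|_U$ and $\tilde\cT_U \to \cT_U$ is the isogeny of tori with that kernel, so $\varphi$ restricts to an isomorphism over $U$, and hence its kernel $\cK$ and cokernel $\cT/\cT^\fin$, formed as fppf sheaves, are commutative affine group schemes whose support is contained in the finite set $\Ram(\cG)$ of closed points of $C$. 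The one input the whole argument rests on is then: since $k$ is algebraically closed, $H^i_{\mathrm{fppf}}(\Spec k, A) = 0$ for all $i \ge 1$ and every commutative affine $k$-group scheme $A$ (d\'evissage to $\bG_m, \bG_a, \mu_p, \alpha_p, \bZ/p, \bZ/\ell$, each checked directly), and therefore $H^i(C, \cK) = H^i(C, \cT/\cT^\fin) = 0$ for all $i \ge 1$; more generally, any coherent sheaf on $C$ supported on $\Ram(\cG)$ has vanishing cohomology in positive degrees because $\Ram(\cG)$ is $0$-dimensional. Checking that $\cK$ and $\cT/\cT^\fin$ really are of this shape is the place where the possible non-flatness of $\cZ$ in small residue characteristic has to be dealt with -- in all remaining cases $\cZ = \cZ^\fin$ and $\cK = 0$.

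Granting this, part (1) is the long exact cohomology sequences of $0 \to \cK \to \cT^\fin \to \im\varphi \to 0$ and $0 \to \im\varphi \to \cT \to \cT/\cT^\fin \to 0$: surjectivity of $H^1(C,\cT^\fin) \to H^1(C,\cT)$ follows from the vanishing of $H^2(C,\cK)$ and $H^1(C,\cT/\cT^\fin)$. The same vanishing in families yields the statement about stacks. First, $\Bun_{\cT^\fin} \to \Bun_\cT$ is formally smooth: the obstruction to deforming a $\cT^\fin$-torsor compatibly with a prescribed deformation of its image lies in a cohomology group of a sheaf supported on $\Ram(\cG)$, hence vanishes; it is surjective on geometric points for the same reason; and being also locally of finite presentation it is smooth. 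Its geometric fibres over a $\cT$-torsor $\cP$ are connected because they are gerbes over a point: all $\cT^\fin$-torsors with image $\cP$ are isomorphic, the set of their isomorphism classes being a torsor under $\ker(H^1(C,\cT^\fin) \to H^1(C,\cT))$, which one identifies (using $H^1(C,\cK) = 0$) with the image of $H^0(C,\cT/\cT^\fin) \to H^1(C,\cT^\fin)$, and this image vanishes once one knows $H^0(C,\cT) \to H^0(C,\cT/\cT^\fin)$ is surjective -- a point I would deduce from the Bruhat-Tits description of $\cT$ near $\Ram(\cG)$. Part (2): lifting a family $\cP$ of $\cT$-torsors on $C_S$ to a family of $\tilde\cT$-torsors factors, fppf-locally on $S$, as a lift along $\varphi$ (possible by part (1), with no further obstruction) followed by a lift along $\tilde\cT \to \cT^\fin$, whose obstruction is a class in $H^2(C_{S'},\cZ^\fin)$; since $\cZ^\fin$ is finite flat over $C$ and $C$ is proper over $k$, the formation of $R^2(\mathrm{pr}_S)_*\cZ^\fin$ commutes with the base change $S \to \Spec k$ (again by d\'evissage, reducing to $\bG_m$, $\bG_a$ and their Frobenius kernels, for which it is Picard-base-change and coherent-base-change on a curve), so the obstruction is a section of the constant sheaf $\underline{H^2(C,\cZ^\fin)}$ on $S$ and is therefore constant on the connected scheme $S$.

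Part (3) is then a diagram chase with these two sequences: an element $\xi \in H^1(C,\cT)$ lifts to $H^1(C,\tilde\cT)$ if and only if it lifts to some $\tilde\xi \in H^1(C,\cT^\fin)$ (possible by (1)) whose image under the boundary $\partial\colon H^1(C,\cT^\fin) \to H^2(C,\cZ^\fin)$ of the defining sequence vanishes; two such lifts differ by an element of $\ker(H^1(C,\cT^\fin) \to H^1(C,\cT))$, which equals the image of $H^0(C,\cT/\cT^\fin) \to H^1(C,\cT^\fin)$, so the indeterminacy of $\partial(\tilde\xi)$ is exactly the subgroup divided out in $\overline{H^2(C,\cZ^\fin)}$; hence $\xi \mapsto \partial(\tilde\xi)$ descends to a well-defined map $H^1(C,\cT) \to \overline{H^2(C,\cZ^\fin)}$ whose kernel is the image of $H^1(C,\tilde\cT)$, which is the claimed exact sequence. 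Finally, for part (4) one repeats all of the above with $\cT,\cT^\fin,\tilde\cT$ replaced by $\cG,\cG^\fin,\tilde\cG$: only the finite-flatness and centrality of $\cZ^\fin$, the fact that $\cG^\fin \to \cG$ is an isomorphism over $U$, and the identification of $\Bun_{(-)}$ with $H^1_{\mathrm{fppf}}(C,-)$ were used, and the central extension $0 \to \cZ^\fin \to \tilde\cG \to \cG^\fin \to 0$ still produces a boundary map $H^1(C,\cG^\fin) \to H^2(C,\cZ^\fin)$ on nonabelian fppf cohomology (Giraud). The main obstacle is exactly this last step together with the bookkeeping in part (3): in the nonabelian setting for $\cG$ the long exact sequences must be replaced by arguments with torsors, twisted forms and $\cZ^\fin$-gerbes, and one has to re-prove there that the ambiguity in the chosen lift is precisely the subgroup quotiented out in $\overline{H^2}$. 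Everything else is d\'evissage plus the vanishing of positive-degree fppf cohomology of commutative affine groups over the algebraically closed field $k$.
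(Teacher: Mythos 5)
Your proposal takes a genuinely different route (sheaf-theoretic d\'evissage and base change instead of the paper's loop-group and dimension arguments), but it has real gaps. First, in part (1) your connectedness argument rests on the claim that $\ker\bigl(H^1(C,\cT^{\fin})\to H^1(C,\cT)\bigr)$ is trivial, deduced from a hoped-for surjectivity of $H^0(C,\cT)\to H^0(C,\cT/\cT^{\fin})$. This is false in general: that kernel is exactly the image of $H^0(C,\cT/\cT^{\fin})\to H^1(C,\cT^{\fin})$, and if it vanished the quotient $\overline{H^2(C,\cZ^{\fin})}$ in part (3) would coincide with $H^2(C,\cZ^{\fin})$, which fails for instance for ramified (even) unitary groups, where the coinvariants $\pi_1(\cG_{\overline{\eta}})_{\Gal}$ are strictly smaller. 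The fibres of $\Bun_{\cT^{\fin}}\to\Bun_\cT$ are connected but not single points; the paper proves connectedness by identifying the kernel with a quotient of $\bigoplus_{x\in\Ram(\cT)}L^+\cT_x(k)/L^+\cT^{\fin}_x(k)$, which is connected because $L^+\cT$ is. Relatedly, your vanishing $H^i(C,\cK)=H^i(C,\cT/\cT^{\fin})=0$ for $i\geq 1$ is unjustified: these are fppf quotient sheaves, not coherent sheaves, and ``supported on a finite set of points'' does not reduce their cohomology over $C$ to fppf cohomology of affine groups over $\Spec(k)$ (at the very least the complete local rings $\widehat{\cO}_x$ enter, and one needs an excision/gluing argument). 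The paper sidesteps all of this: surjectivity in (1) comes from generic triviality of torsors (Serre's vanishing for tori, Steinberg--Borel--Springer for $\cG$) together with $L\cT_x\cong L\cT^{\fin}_x$, and smoothness from the surjection on $H^1$ of Lie algebras.

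Second, your part (2) has two problems. The base-change claim for $R^2\pr_{S,*}\cZ^{\fin}$ is only asserted by a vague d\'evissage; for the finite flat group schemes arising here (closures of centers in induced tori, possibly with infinitesimal parts in small characteristic) this needs a genuine argument, and in effect it is equivalent in difficulty to what is being proved. More seriously, even granting it, constancy of the obstruction class attached to \emph{one chosen} $\cT^{\fin}$-lift does not give constancy of liftability of $\cP$ to $\tilde{\cT}$: the class in $H^2(C_{S'},\cZ^{\fin})$ depends on the choice of lift, and the ambiguity (the image of $H^0(C,\cT/\cT^{\fin})$, in families) must be controlled -- this is exactly the delicate point. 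The paper instead proves that $\Bun_{\tilde{\cT}}\to\Bun_\cT$ is flat with finite fibres: finiteness because multiplication by $n$ factors through $\tilde{\cT}\to\cT$, flatness by a dimension count resting on the Chai--Yu theorem that $\Lie(\tilde{\cT})$ and $\Lie(\cT)$ have the same degree; the image is then an open sub-group-stack, hence also closed, which is what ``the obstruction is constant'' means. This Chai--Yu input has no counterpart in your argument. Finally, for part (4) your part (1) argument is intrinsically abelian; the paper uses Steinberg--Borel--Springer for surjectivity, the big-cell identification $\cG/\cG^{\fin}\cong\cT/\cT^{\fin}$ to reduce the obstruction bookkeeping to the torus case, and the surjectivity $H^1(C,\cT^{\fin})\to H^2(C,\cZ^{\fin})$ (from $H^2(C,\tilde{\cT})=0$, Lemma \ref{Z_in_induziertem_torus}) to get local constancy of the quotient sheaf. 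You correctly flag the nonabelian step as the main obstacle, but it is left unresolved, so as it stands the proposal does not prove the lemma.
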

\begin{proof}
We know that for any torus $T$ over $k(C)$ we have $H^1(k(C),T)=0$ (\cite{SerreCohomologieGaloisienne} Corollaire p.170). So any $\cT-$torsor is generically trivial and thus any $\cT$-torsor lies in the image of the gluing map $\oplus_{x\in C} L\cT_{x_i}\to \Bun_{\cT}$. Since $L\cT_{x_i}\cong L\cT^{\fin}_{x_i}$ this proves the claimed surjectivity. The map $\cT^\fin \to \cT$ induces a map on the Lie algebras $\Lie(\cT^\fin)\to \Lie(\cT)$ which is an isomorphism on the generic fibre. In particular, the map $H^1(C,\Lie(\cT^\fin))\to H^1(C,\Lie(\cT))$ is surjective, and thus the map $\Bun_{\cT^\fin}\to \Bun_\cT$ is smooth. 
Finally, the elements of the kernel of $H^1(C,\cT^{\fin})\to H^1(C,\cT)$ are obtained from elements in $$\oplus_{x\in \Ram(\cT)} L^+\cT_x(k)/L^+\cT^{\fin}_x(k)\subset \oplus_{x\in \Ram(\cT)}L\cT^{\fin}_x(k)/L^+\cT^\fin_x(k).$$ Since $L^+\cT$ is a connected scheme, this is connected. This proves part (1).

Part (1) for $\cG$ follows by the same argument using the theorem of Steinberg and Borel--Springer as recalled in section \ref{local_triviality}.

To show (2) we want to prove that the map $\Bun_{\tilde{\cT}}$ to $\Bun_{\cT}$ is flat with finite fibres. 

As in Claim \ref{Zfin}, there is an $n>0$ such that the multiplication by $n$ on $\tilde{\cT}$ factors through the map ${\tilde{\cT}}\to {\cT}$. Since multiplication by $n$ has finite fibres on $\Bun_{\tilde{\cT}}$, the map $\Bun_{\tilde{\cT}}\to \Bun_{\cT}$ has finite fibres as well.

Next, we recall that the dimension of the stacks $\Bun_{\tilde{\cT}}$ and $\Bun_{\cT}$, which are both smooth, only depends on the rank and degree of the Lie algebras $\Lie(\tilde{\cT})$ and $\Lie(\cT)$. These sheaves have the same degree by the the main theorem of \cite{ChaiYu}, so the two stacks have the same dimension. 

We claim that this implies that the canonical map $\Bun_{\tilde{\cT}}\to \Bun_{\cT}$ is flat. First, since both stacks carry a group structure it is sufficient to show that the map on the connected component of the identity $p:\Bun_{\tilde{\cT}}^\circ \to \Bun_\cT^\circ$ is flat on some non-empty open substack of $\Bun_{\tilde{\cT}}^\circ$.

To show this, take a smooth, connected scheme $X$ together with a smooth dominant map $p:X\to \Bun_\cT^\circ$ of relative dimension $d_p$. Also take a smooth dominant map $q:Y\to X\times_{\Bun_\cT^\circ} \Bun_{\tilde{\cT}}^\circ=:F$ of relative dimension $d_q$. In particular the map $Y \to \Bun_{\tilde{\cT}}^\circ$ is smooth of relative dimension $d_q+d_p$.
If the induced map $f:Y\to X$ was not dominant, then $f$ had a fibre of dimension $>d_q$. This implies that $F\to X$ has a fibre of dimension $>0$, which cannot happen since the fibres of $\Bun_{\tilde{\cT}} \to \Bun_\cT$ are finite.
Since $X$ is integral and $f$ is dominant, generic flatness implies that the map $Y \to X$ is flat on some nonempty open $U\subset Y$. This implies that $p$ is generically flat, because flatness can be checked on smooth coverings. This shows our claim.

In particular, for any family of $\cT$-torsors parametrized by a connected scheme $S$, the obstruction to lift the family fppf-locally on $S$ to a family of $\tilde{\cT}$-torsors is constant on $S$. 

Statement (3) follows from the exactness of the sequences 
$$ H^1(C,\tilde{\cT})\to H^1(C,\cT^\fin) \to H^2(C,\cZ^\fin)$$
and 
$$ H^0(C,\cT/\cT^\fin) \to H^1(C,\cT^\fin) \to H^1(C,\cT).$$
Here the second sequence is obtained from the map $\cT^\fin \to \cT$ which induces an injection of fppf-sheaves on $C$, since the map is an isomorphism on the generic fibre. 

To deduce the corresponding statements for $\cG$-torsors we note that the sheaves $\cG/\cG^\fin\cong \cT/\cT^\fin$ are isomorphic. This is because the $\cG$ and $\cG^\fin$ contain open subsets isomorphic to $\cU \cT \cU^-$ and $\cU \cT^\fin \cU^-$ respectively, generating the connected group schemes $\cG$ and $\cG^\fin$. We have already seen that after passing to a smooth neighbourhood any family of $\cG$-torsors can be lifted to a family of $\cG^\fin$ torsors. 

The obstruction to lift a family of $\cG^\fin$ torsors to a family of $\widetilde{\cG}$-torsors locally on the base $S$ is given by a class in $\bR^2\pr_{S,*}\pr_C^* \cZ^{\fin}$.  So the obstruction to lift a family of $\cG$-torsors to a ${\widetilde{\cG}}$-torsor is given by an element in $\bR^2\pr_{S,*}\pr_C^* \cZ^{\fin}/\bR^0 \pr_{S,*} \pr_C^* \cG/\cG^\fin$. Since $\cG/\cG^\fin$ is supported at $\Ram(\cG)$ this quotient is the same as the corresponding quotient in the case of $\cT$-torsors (see Remark \ref{lokale Beschreibung der Liftungsgerbe} for a local description of the obstruction classes in $H^2(C,\cZ^\fin)$). This already proves (3) for $\cG$.

We are left to show that the above quotient sheaf is locally constant. In the following Lemma we will see that $H^1(C,\cT^\fin) \to H^2(C,\cZ^\fin)$ is surjective. Since we already know that the obstruction to the existence of a lift of a $\cT$-torsor to a $\tilde{\cT}$-torsor is locally constant, this implies that the quotient $\bR^2\pr_{S,*}\pr_C^* \cZ^{\fin}/\bR^0 \pr_{S,*} \pr_C^* \cT/\cT^\fin$ is locally constant and it is isomorphic to the above quotient sheaf.
\end{proof}

Knowing that the obstruction to lift a given $\cG$-torsor to a $\tilde{\cG}$-torsor is given by an element in $\overline{H^2(C,\cZ^{\fin})}$ we will need to calculate the latter group. This we can do, because we have realized $\cZ$ as a subgroup of an induced torus i.e., a groups of the form $\text{Res}_{D/C}\bG_m$ as considered in Example (3) in the introduction, where $D\to C$ is a finite, generically \'etale, but possibly disconnected covering of $C$.

\begin{lemma}\label{Z_in_induziertem_torus} Assume that $k=\overline{k}$ is algebraically closed. Then $H^2(C,\tilde{\cT})=0$ and therefore the sequence:
$$0 \to \cZ^{\fin} \to \tilde{\cT} \to \cT^{\fin} \to 0$$
defines an exact sequence
$$H^1(C,\tilde{\cT}) \to H^1(C,\cT^{\fin}) \to H^2(C,\cZ^{\fin})\to 0.$$	
By Lemma \ref{Zfin_lemma} this induces an exact sequence:
$$H^1(C,\tilde{\cT}) \to H^1(C,\cT) \to \overline{H^2(C,\cZ^{\fin})}\to 0.$$	
\end{lemma}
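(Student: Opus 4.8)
The whole statement reduces to the single vanishing $H^2(C,\tilde\cT)=0$; granting that, the two displayed exact sequences fall out formally, so the plan is organized around proving this vanishing.

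\textbf{Step 1: make $\tilde\cT$ explicit as an induced torus globally.} Recall that $\tilde\cG_{k(C)}$ is simply connected and, since $k=\overline k$, quasi-split (as recalled in Section \ref{local_triviality}). Hence the torus $\tilde\cT_{k(C)}$ — the torus quotient of a Borel of $\tilde\cG_{k(C)}$ — is an induced torus, say $\tilde\cT_{k(C)}\cong\Res_{E/k(C)}\bG_m$ for a finite étale $k(C)$-algebra $E$. Let $\pi\colon D\to C$ be the normalization of $C$ in $E$: then $D$ is a finite disjoint union of smooth projective curves over $k$ and $\pi$ is finite and generically étale. The first thing I would do is verify that globally $\tilde\cT\cong\Res_{D/C}\bG_m=\pi_*\bG_m$. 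Over the open set $U$ where $\cG$ is semisimple this is the standard description of the maximal torus of a quasi-split simply connected group; at a point $x\in\Ram(\cG)$ the local description of $\tilde\cT$ recalled just before the lemma, together with \cite{BT2} 4.4.8, identifies the completion of $\tilde\cT$ at $x$ with $\Res_{\widehat{\cO}_{D}/\widehat{\cO}_{C,x}}\bG_m$, and since Weil restriction commutes with these localizations these local models glue — using the gluing technique from the proof of Proposition \ref{Uniformisierungs_Abbildung} — to $\pi_*\bG_m$. (I would also record in passing that $\Res_{D/C}\bG_m$ has connected fibres even over the branch points of $\pi$: the Weil restriction of $\bG_m$ along a finite local $k$-algebra is an extension of $\bG_m$ by a unipotent group, hence connected.)

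\textbf{Step 2: the vanishing.} Since $\pi$ is finite it is acyclic for the étale topology, so the Leray spectral sequence degenerates and
$$H^2(C,\tilde\cT)=H^2(C,\pi_*\bG_m)=H^2(D,\bG_m)=\bigoplus_i H^2(D_i,\bG_m),$$
the sum over the connected components $D_i$ of $D$. Each $D_i$ is a curve over the algebraically closed field $k$, so $H^2(D_i,\bG_m)=\mathrm{Br}(D_i)=0$ by Tsen's theorem; hence $H^2(C,\tilde\cT)=0$. Since $\tilde\cT$ and $\cT^{\fin}$ are smooth commutative group schemes their fppf and étale cohomology coincide, so the vanishing also holds in the fppf topology, which is the one needed for the sequence $0\to\cZ^{\fin}\to\tilde\cT\to\cT^{\fin}\to0$ (recall $\cZ^{\fin}$ may fail to be smooth).

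\textbf{Step 3: the exact sequences.} The long exact fppf-cohomology sequence of $0\to\cZ^{\fin}\to\tilde\cT\to\cT^{\fin}\to0$ together with $H^2(C,\tilde\cT)=0$ immediately gives the first assertion, $H^1(C,\tilde\cT)\to H^1(C,\cT^{\fin})\to H^2(C,\cZ^{\fin})\to0$. For the second, I would combine this with Lemma \ref{Zfin_lemma}: part (1) says $H^1(C,\cT^{\fin})\to H^1(C,\cT)$ is surjective with kernel the image of the boundary map out of $H^0(C,\cT/\cT^{\fin})$, and part (3) together with the definition of $\overline{H^2(C,\cZ^{\fin})}$ expresses the latter as the quotient of $H^2(C,\cZ^{\fin})$ by the image of that same $H^0$. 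A routine diagram chase then shows the composite $H^1(C,\tilde\cT)\to H^1(C,\cT^{\fin})\to H^1(C,\cT)$ has cokernel canonically $\overline{H^2(C,\cZ^{\fin})}$, giving exactness of $H^1(C,\tilde\cT)\to H^1(C,\cT)\to\overline{H^2(C,\cZ^{\fin})}\to0$. The only step where I expect to have to be careful is Step~1, namely checking that the global group scheme $\tilde\cT$ — defined as a connected Néron model assembled from Bruhat–Tits local data — is genuinely $\pi_*\bG_m$ for an honest finite morphism of smooth projective curves, and not merely so over the dense open $U\subset C$; Steps~2 and 3 are formal.
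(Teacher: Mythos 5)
Your proposal is correct and follows essentially the same route as the paper: identify $\tilde\cT$ with $\pi_*\bG_m$ for a finite covering $\pi\colon D\to C$, kill the higher direct images so that Leray gives $H^2(C,\tilde\cT)=H^2(D,\bG_m)=0$, and then deduce both displayed sequences formally from the fppf long exact sequence and Lemma \ref{Zfin_lemma}. The only minor difference is in the justification of the degeneration: you use étale acyclicity of finite morphisms together with the étale--fppf comparison for the smooth group scheme $\pi_*\bG_m$, whereas the paper argues directly in the fppf topology that the higher direct images $R^{>0}\pi_*\bG_m$ vanish by looking at the finite fibres.
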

\begin{proof}
Write $\tilde{\cT}=\pi_*(\bG_m)$, where $\pi\colon D\to C$ is a finite covering. We claim that the Leray spectral sequence $$H^*(C,\bR\pi_* \bG_m)\Rightarrow H^2(D,\bG_m)$$ defines an isomorphism $H^2(C,\tilde{\cT})=H^2(D,\bG_m)=0$, because the higher derived images vanish in the fppf-topology.
This holds because $R^1\pi_*(\bG_m)$ is the sheaffification of the group of line bundles on the finite fibres of $\pi$, and $H^2$ of the fibres classifies gerbes, which are also calculated by the corresponding \'etale cohomology, which is $0$.

Thus we obtain $$H^1(C,\tilde{\cT}) \to H^1(C,\cT^{\fin}) \to H^2(C,\cZ^{\fin})\to 0.$$
\end{proof}

In order to make this more explicit we need to check that the moduli space of torsors under a torus has the expected number of connected components. Recall that for a torus $\cT$ over $k(C)$ the fundamental group is $\pi_1(\cT):=X_*(\cT_{k(C)^{sep}})$, considered as a $\Gal(k(C)^{sep}/k(C))$-module.
\begin{lemma}\label{pi0_Tori}
Keeping the notation from Lemma \ref{Z_in_induziertem_torus} we have:
\begin{enumerate}
 \item There is an isomorphism $\pi_0(\Bun_\cT) \map{\cong} X_*(\cT_{k(C)^{\textrm{sep}}})_{\Gal(k(C)^{\textrm{sep}}/k(C))}$.
 \item There is an exact sequence:
$$ \pi_1(\tilde{\cT})_{\Gal(k(C)^{\textrm{sep}}/k(C))} \to \pi_1(\cT)_{\Gal(k(C)^{\textrm{sep}}/k(C))} \to \overline{H^2(C,\cZ^{\fin})}\to 0.$$
In particular $\pi_1(\cG)_{\Gal(k(C)^{\textrm{sep}}/k(C))}\cong \overline{H^2(C,\cZ^{\fin})}$.
\end{enumerate}
\end{lemma}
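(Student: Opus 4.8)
The plan is to deduce both statements from a single computation: for an arbitrary torus $\cS$ over $C$ with generic fibre $S:=\cS_\eta$, there is a canonical isomorphism
$$\pi_0(\Bun_\cS)\;\cong\;X_*(S_{k(C)^{\sep}})_{\Gamma},\qquad \Gamma:=\Gal(k(C)^{\sep}/k(C)).$$
Granting this, part (1) is the special case $\cS=\cT$ (and, in the notation of Lemma~\ref{Z_in_induziertem_torus}, also $\cS=\tilde{\cT}$); part (2) will then be obtained by feeding this into the exact sequence $H^1(C,\tilde{\cT})\to H^1(C,\cT)\to\overline{H^2(C,\cZ^{\fin})}\to 0$ of Lemma~\ref{Z_in_induziertem_torus} and combining it with the $\Gamma$-coinvariants of the defining sequence $0\to X_*(\tilde{T}_{k(C)^{\sep}})\to X_*(T_{k(C)^{\sep}})\to\pi_1(\cG)\to 0$ of Galois modules.

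\emph{The induced case.} If $\cS=\Res_{D/C}\bG_m$ for a finite, generically \'etale covering $\pi\colon D\to C$, then by adjunction an $\cS$-torsor on $C_R$ is the same as a line bundle on $D_R$; hence $\Bun_\cS\cong\Pic_{D/k}$ as stacks over $k$, so $\pi_0(\Bun_\cS)=\bigoplus_{D_i}\bZ$, the sum running over the irreducible components of $D$, with the isomorphism given by the multidegree. On the other hand $X_*(S_{k(C)^{\sep}})$ is the permutation $\Gamma$-module on the finite set of $k(C)^{\sep}$-points of $D_\eta$, whose $\Gamma$-orbits are exactly the components of $D$, so Shapiro's lemma gives $X_*(S_{k(C)^{\sep}})_{\Gamma}=\bigoplus_{D_i}\bZ$ compatibly. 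This already settles $\cS=\tilde{\cT}$ and every induced torus occurring below.

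\emph{The general case.} For a general torus $\cS$ over $C$ — here the connected N\'eron model of $S$ — I would first use $H^1(k(C),S)=0$, so that every $\cS$-torsor is generically trivial and $\Bun_\cS$ is covered by the gluing maps $\bigoplus_x\Gr_{\cS,x}\to\Bun_\cS$; combining the local computations $\pi_0(\Gr_{\cS,x})\cong X_*(S_{K_x})_{\Gamma_{K_x}}$ (with $\Gamma_{K_x}$ a decomposition group, this local statement itself reducing to $\Res$ of $\bG_m$), one assembles a ``degree'' homomorphism $\pi_0(\Bun_\cS)\to X_*(S_{k(C)^{\sep}})_{\Gamma}$, whose well-definedness is precisely the product formula $\deg(\operatorname{div} f)=0$ for $f\in k(C)^{\times}$, transported along a resolution. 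To prove bijectivity I would choose a short exact sequence of fppf group sheaves $1\to\cK\to\cP\to\cS\to 1$ on $C$ with $\cP$ induced (dualize and spread out a surjection from a permutation $\Gamma$-module onto $X_*(S_{k(C)^{\sep}})$; here $\cK$ is a flat group scheme of multiplicative type) and compare the long exact cohomology sequence with the cocharacter-lattice sequence, passing to $\pi_0$. \emph{This is the step I expect to be the main obstacle:} the connected N\'eron-model functor is not exact, $\cK$ may carry torsion, and $H^1(C,\cP)\to\pi_0(\Bun_\cP)$ is only a quotient map, so one must check — using $k=\kbar$ (every component of a $\Bun$ has a rational point) and the constancy of Chern/obstruction classes on connected families, as in Lemma~\ref{Zfin_lemma}(2) — that none of these phenomena creates extra components, i.e. that the degree-zero locus of $\Bun_\cS$ is connected.

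\emph{Deduction of part (2).} By the above, $\pi_1(\cT)_{\Gamma}=\pi_0(\Bun_\cT)$ and $\pi_1(\tilde{\cT})_{\Gamma}=\pi_0(\Bun_{\tilde{\cT}})$, the map between them being induced by $p\colon\tilde{\cT}\to\cT$, hence on generic fibres by $X_*(\tilde{T})\hookrightarrow X_*(T)$. The sequence $H^1(C,\tilde{\cT})\to H^1(C,\cT)\to\overline{H^2(C,\cZ^{\fin})}\to 0$ of Lemma~\ref{Z_in_induziertem_torus} descends to $\pi_0$: the class of a $\cT$-torsor in $H^2(C,\cZ^{\fin})$ is constant on connected families (Lemma~\ref{Zfin_lemma}(2)), so the right-hand map factors through $\pi_0(\Bun_\cT)$ and stays surjective, and exactness at $\pi_0(\Bun_\cT)$ follows by a diagram chase using that $H^1(C,\cT)\tto\pi_0(\Bun_\cT)$ and $H^1(C,\tilde{\cT})\tto\pi_0(\Bun_{\tilde{\cT}})$ are surjective (again $k=\kbar$). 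This yields the asserted exact sequence $\pi_1(\tilde{\cT})_{\Gamma}\to\pi_1(\cT)_{\Gamma}\to\overline{H^2(C,\cZ^{\fin})}\to 0$. Finally, $\overline{H^2(C,\cZ^{\fin})}$ is by this sequence the cokernel of $\pi_1(\tilde{\cT})_{\Gamma}\to\pi_1(\cT)_{\Gamma}$, while applying the right-exact functor of $\Gamma$-coinvariants to $0\to X_*(\tilde{T}_{k(C)^{\sep}})\to X_*(T_{k(C)^{\sep}})\to\pi_1(\cG)\to 0$ exhibits $\pi_1(\cG)_{\Gamma}$ as the cokernel of the \emph{same} map; hence $\pi_1(\cG)_{\Gamma}\cong\overline{H^2(C,\cZ^{\fin})}$.
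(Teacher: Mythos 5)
Your induced-torus computation and your deduction of part (2) are essentially what the paper does (the paper also applies $\pi_0$ to the sequence of Lemma \ref{Z_in_induziertem_torus}, uses the local constancy of the obstruction class, and identifies $\pi_1(\cG)_\Gamma\cong\overline{H^2(C,\cZ^{\fin})}$ by comparing cokernels via right-exactness of coinvariants). But the heart of the lemma is part (1) for a \emph{general} torus $\cT$, and that is precisely the step you declare ``the main obstacle'' and do not carry out. As it stands this is a genuine gap: your proposed device --- a short exact sequence $1\to\cK\to\cP\to\cS\to 1$ with $\cP$ induced, compared with the fppf long exact sequence --- runs into exactly the problems you list yourself (the connected N\'eron model is not the quotient $\cP/\cK$, the kernel $\cK$ need not be flat or finite at the ramification points, and $H^2(C,\cK)$ has no direct relation to $\pi_0$ of the moduli stack), and nothing in your sketch shows that the ``degree-zero'' locus of $\Bun_\cS$ is connected, i.e.\ injectivity of your degree map. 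Even the well-definedness of that map requires a product-formula statement for arbitrary tori (vanishing of the sum of local Kottwitz invariants of an element of $S(k(C))$), which you assert but do not reduce to the $\bG_m$ case.

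The paper avoids all of this by working only with \emph{surjections from induced tori}, never with kernels: as in the construction of the Kottwitz homomorphism one chooses induced tori $\cI_2\to\cI_1\tto\cT$ such that $X_*(\cI_2)\to X_*(\cI_1)\to X_*(\cT)\to 0$ is exact. Pappas--Rapoport (Theorem 5.1) gives $\pi_0(L\cT_x)\cong X_*(\cT)_{\Gal(K_x^{\mathrm{sep}}/K_x)}$ for every $x$, hence exact sequences $\pi_0(L\cI_{2,x})\to\pi_0(L\cI_{1,x})\tto\pi_0(L\cT_x)\to 0$, and since every $\cT$-torsor (and likewise every $\cI_j$-torsor) lies in the image of the ad\`elic gluing map $\bigoplus_x\cT(K_x)\to\Bun_\cT(k)$, these assemble into an exact sequence $\pi_0(\Bun_{\cI_2})\to\pi_0(\Bun_{\cI_1})\tto\pi_0(\Bun_\cT)\to 0$. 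Comparing this with $X_*(\cI_2)_\Gamma\to X_*(\cI_1)_\Gamma\to X_*(\cT)_\Gamma\to 0$ and using the induced case for $\cI_1,\cI_2$ identifies $\pi_0(\Bun_\cT)$ and $X_*(\cT_{k(C)^{\mathrm{sep}}})_\Gamma$ as cokernels of the same map --- both surjectivity and injectivity come for free, with no appeal to $H^2$ of a kernel or to connectedness of a degree-zero locus. If you want to salvage your write-up, replace your kernel construction by such a two-term resolution; your part (2) then goes through unchanged.
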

\begin{proof}
If $\cT=\pi_*\bG_m$ is an induced torus, then (1) holds: In this case the connected components of $\Bun_{\cT}=\Pic_D$ are given by the degrees of line bundles on the connected components of $D$ and $X_*(\cT)=\pi_*(\bZ)$, as sheaves over $C$. 

Next, let $\cT$ be arbitrary. As in the definition of the Kottwitz homomorphism (cf. \cite{Pappas-Rapoport} section 3 for a brief review of the construction of the Kottwitz homomorphism) we choose induced tori $\cI_2 \to {\cI_1} \tto \cT$ such that the induced sequence  $X_*(\cI_2) \to X_*(\cI_1)\to X_*(\cT)\to 0$ is exact. By \cite{Pappas-Rapoport} Theorem 5.1 we know that for any $x\in C$ we have $\pi_0(L\cT_x)\cong X_*(\cT)_{\Gal({K_x}^{\textrm{sep}}/K_x)}$. Thus we obtain an exact sequence $$\pi_0(L\cI_{2,x}) \to \pi_0(L\cI_{1,x}) \tto \pi_0(L\cT_x)\to 0.$$ Since any $\cT$-torsor lies in the image of the gluing map $\bigoplus_{x\in C} \cT(K_x) \to \Bun_{\cT}(k)$ we obtain an exact sequence $$\pi_0(\Bun_{\cI_2}) \to \pi_0(\Bun_\cI) \tto \pi_0(\Bun_{\cT})\to 0.$$ This implies (1). 

Finally (2) follows applying $\pi_0(\underline{\quad})$ to the last sequence in Lemma \ref{Z_in_induziertem_torus} and then using (1).
\end{proof}

\begin{remark}\label{lokale Beschreibung der Liftungsgerbe}
The above lemma shows that the surjection $\cT(K_x)=\cT^{\fin}(K_x) \tto {H^2(C,\cZ^{\fin})}$ can be obtained by associating to a $\cT^{\fin}$-torsor the gerbe of liftings to a $\tilde{\cT}$-torsor. Alternatively we can describe this using the diagram:
$$\xymatrix{
H^1(C, \cT^\fin) \ar[r]       & H^2(C,\cZ^{\fin})\\
\cT(K_x)    \ar[r]\ar[u] & H^1(K_x,\cZ^{\fin})\ar[u]\\
}$$
Namely, given a $\cZ^{\fin}$-torsor $\cP\to \Spec(K_x)$ we can define a $\cZ^{\fin}$-gerbe by the groupoid:
$$\xymatrix{ \cZ^{\fin}|_{\Spec(\Ox)} \cup \cZ^{\fin}|_{C-x}\cup \cP \dar &\Spec(\Ox) \cup (C-x)},$$
in which the source and target morphisms for the first two spaces are the projections and for $\cP$ the source morphism is the projection to $\Spec(\Ox)$ and the target morphism is the projection to $(C-x)$, composition is given by multiplication and the $\cZ$-torsor structure of $\cP$. To prove that this defines an algebraic stack we only need to note that any torsor over $K_x$ extends to a scheme of finite type over $C-x$. 

We claim that this gerbe has a natural morphism to the lifting gerbe. The lifting gerbe of a $\cT$-torsor $\cQ_0$ is the stack which is given by associating to any flat $f\colon S\to C$ the category of $\tilde{\cT}$-torsors $\cQ$ over $S$ together with an isomorphism of the $\cT$-torsors $\cQ\times^{\tilde{\cT}}\cT\cong f^*(\cQ_0)$. This is a $\cZ^{\fin}$-gerbe, because for any flat $S\to C$ sections of $\tilde{\cT}(S)$ mapping to $1$ in $\cT(S)$ automatically factor through $\cZ^{\fin}$. Moreover this gerbe is neutral over $\Spec(\widehat{\cO}_{C,x})$ and $(C-x)$ because the trivial $\cT$-torsors admits a reduction to $\tilde{\cT}$.

Given $g\in \cT(K_x)$ denote by $\cQ_g$, the associated $\cT$-torsor over $C$ and by $\cP_g$ the $\cZ^{\fin}$-torsor over $\Spec(K_x)$. Finally denote the lifting gerbe of $\cT$ by $\cZ_\cT$. The gerbe constructed above maps to the lifting gerbe, because $\cP_g$ is canonically trivial over $\Spec(\widehat{\cO}_{C,x})$ and $C-x$ so we obtain canonical morphisms $\Spec(\widehat{\cO}_{C,x})\to \cZ_{\cT}$ and $(C-x)\to \cZ_{\cT}$. The difference betwen the two trivializations of $\cP_g$ over $K_x$ is given by $g$, so $\Spec(\widehat{\cO}_{C,x}) \times_{\cZ_\cT} (C-x) \cong \cP_g$. Thus we obtain the claimed morphism of $\cZ^{\fin}$-gerbes, but any morphism of $\cZ^{\fin}$-gerbes is an isomorphism.

This description has the advantage that it only uses the local structure of $\cT$ at $K_x$, which allows us to compare the construction for groups which are only locally isomorphic to $\cT$.
\end{remark}

Now we can prove the uniformization theorem stated in the introduction:
\begin{theorem}\label{DS}
Let $\cG$ be a parahoric Bruhat-Tits group scheme over $C$ such that the generic fiber of
$\cG$ is semisimple. Then for every point $x\in C$, every noetherian scheme $S$ and every
family $\cP \in \Bun_\cG(S)$ there exists an fppf-covering $S^\prime \to S$ such that
$\cP|(C-x)\times S^\prime$ is trivial. In particular the map $\Gr_{\cG,x} \to \Bun_\cG$
is a formally smooth, surjective map of stacks.
\end{theorem}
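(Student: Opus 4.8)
The plan is to reduce to the simply connected case, Theorem \ref{EinfachzusammenhaengendeUniformisierung}, using the simply connected covering $p\colon\widetilde{\cG}\to\cG$: given a family of $\cG$-torsors I will correct it by a modification supported at the point $x$ so that the corrected family acquires a lift to $\widetilde{\cG}$, to which the simply connected uniformization applies. Since the assertion is fppf-local on $S$ and $S$ is noetherian I may assume $S$ connected, and (as at the beginning of this section) that $k=\kbar$. By Lemma \ref{Zfin_lemma}(4), after replacing $S$ by a smooth --- hence fppf --- cover, $\cP$ lifts to a $\cG^{\fin}$-torsor $\cP^{\fin}\in\Bun_{\cG^{\fin}}(S)$ (inducing $\cP$ via $\cG^{\fin}\to\cG$), and it therefore suffices to trivialize $\cP^{\fin}$ over $(C-x)$ after a further fppf-cover.

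The heart of the argument concerns lifting $\cP^{\fin}$ along the central extension $0\to\cZ^{\fin}\to\widetilde{\cG}\to\cG^{\fin}\to 0$. The obstruction to such a lift existing fppf-locally on $S$ is a class $\beta\in H^2(C,\cZ^{\fin})$, and by Lemma \ref{Zfin_lemma}(2),(4) it is constant on the connected base $S$; by Lemmas \ref{Z_in_induziertem_torus}, \ref{pi0_Tori} and the local description of the lifting gerbe in Remark \ref{lokale Beschreibung der Liftungsgerbe}, every class in $H^2(C,\cZ^{\fin})$ is hit by the boundary homomorphism $\cT^{\fin}(K_x)=\cT(K_x)\to H^2(C,\cZ^{\fin})$. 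I pick $g\in\cT(K_x)$ representing $-\beta$, viewed as an element of the loop group $L\cG_x(k)=L\cG^{\fin}_x(k)$, and form the Hecke modification $\cP^{\fin}_g\in\Bun_{\cG^{\fin}}(S)$ of the family $\cP^{\fin}$ at $x$ by this constant $g$. Then $\cP^{\fin}_g|_{(C-x)\times S}\cong\cP^{\fin}|_{(C-x)\times S}$, while --- because by Remark \ref{lokale Beschreibung der Liftungsgerbe} the effect of the modification on the lifting gerbe is computed purely from the structure of the group near $x$, and $g$ is constant over $S$ --- the obstruction class of $\cP^{\fin}_g$ is $\beta+(-\beta)=0$. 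Hence, after a further fppf-cover $S^\prime\to S$, the family $\cP^{\fin}_g$ lifts to some $\widetilde{\cP}\in\Bun_{\widetilde{\cG}}(S^\prime)$.

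Now Theorem \ref{EinfachzusammenhaengendeUniformisierung} applied to $\widetilde{\cP}$ produces an \'etale cover $S^{\prime\prime}\to S^\prime$ with $\widetilde{\cP}|_{(C-x)\times S^{\prime\prime}}$ trivial; pushing forward along $\widetilde{\cG}\to\cG^{\fin}\to\cG$ gives that $\cP^{\fin}_g$, hence $\cP^{\fin}$ (the modification being supported at $x$), hence $\cP$ is trivial on $(C-x)\times S^{\prime\prime}$, and composing the covers of $S$ yields the desired fppf-cover. Finally, $\Gr_{\cG,x}\to\Bun_\cG$ is formally smooth by Proposition \ref{Uniformisierungs_Abbildung}(4), and since $\Gr_{\cG,x}$ represents the functor of $\cG$-torsors together with a trivialization on $C-s$ by Proposition \ref{Uniformisierungs_Abbildung}(2), the statement just proved is exactly the fppf-surjectivity of $\Gr_{\cG,x}\to\Bun_\cG$.

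I expect the main obstacle to be the middle step: one must show simultaneously that the obstruction to lifting to $\widetilde{\cG}$ is constant in families --- which is where Lemma \ref{Zfin_lemma} and, underneath it, the equality of the degrees of $\Lie(\widetilde{\cT})$ and $\Lie(\cT)$ from \cite{ChaiYu} enter --- and that it can be annihilated by a single Hecke modification at the fixed point $x$, so that the restriction to $C-x$ is not changed. The latter requires computing the effect of a Hecke modification on the lifting gerbe, uniformly over $S$, which is possible precisely because the lifting gerbe depends only on the local group at $x$ (Remark \ref{lokale Beschreibung der Liftungsgerbe}); this also legitimizes comparing the constructions for $\cG$, $\cG^{\fin}$ and the modifying element $g$. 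Granting this, the remainder is a formal consequence of the simply connected case.
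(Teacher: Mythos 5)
Your argument is correct in substance and uses the same toolkit as the paper (the simply connected Theorem \ref{EinfachzusammenhaengendeUniformisierung}, the $\cZ^{\fin}$, $\cG^{\fin}$ formalism of Lemma \ref{Zfin_lemma}, local constancy of the obstruction, and the killing of the obstruction by a modification at $x$ coming from the Kottwitz surjectivity via Remark \ref{lokale Beschreibung der Liftungsgerbe}), but the endgame is organized differently from the paper. The paper never modifies the family: it twists by the inner form $\cG_0=\Aut_\cG(\cP_0)$ of a chosen fibre, observes that the ``difference'' $\cG_0$-torsor $\cP\times^{\cG}\cP_0$ has obstruction class $0$ (since it is trivial at the chosen point and the class is locally constant), applies the simply connected case to conclude $\cP\cong\cP_0\times S'$ away from $x$, and is then left with modifying the single torsor $\cP_0$ over $\kbar$ at $x$; for this only surjectivity onto the quotient $\overline{H^2(C,\cZ^{\fin})}$ is needed. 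You instead perform a Hecke modification of the whole family by a constant $g\in\cT(K_x)$ and lift the modified family to $\tilde{\cG}$. This works, but it shifts some weight onto steps you pass over quickly: (i) the family modification itself requires, after a further \'etale cover of $S$, a trivialization of $\cP^{\fin}$ on the formal completion along $x\times S$ and the gluing mechanism of Lemma \ref{push-out}/Proposition \ref{Uniformisierungs_Abbildung} (and one should note the result is independent, for your purposes, of the chosen formal trivialization, and that the isomorphism away from $x$ is over the cover, not over $S$ itself); (ii) the additivity of the obstruction class under modification must be checked for an arbitrary (not just the trivial) torsor and uniformly in the family --- Remark \ref{lokale Beschreibung der Liftungsgerbe} literally computes the lifting gerbe of the modification of the trivial torsor over a point, though the extension is mild and the paper's own final step for $\cP_0$ implicitly uses the same fact, with local constancy (Lemma \ref{Zfin_lemma}) reducing your family statement to the fibrewise one; and (iii) you need surjectivity of $\cT(K_x)\to H^2(C,\cZ^{\fin})$ onto the full group rather than onto $\overline{H^2(C,\cZ^{\fin})}$ --- this is what the Remark asserts, and it does follow from Lemma \ref{pi0_Tori} because the local coinvariants of $X_*(\cT)$ surject onto the global ones, but you should say so. In exchange, your route avoids the paper's inner-twist bookkeeping (that $\tilde{\cG}_0\to\cG_0$ has the same kernel $\cZ$ and that the trivial $\cG_0$-torsor lifts, via Giraud); both approaches are legitimate, and granting the three points above your proof is complete.
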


\begin{proof}
Denote by $\tilde {\cG}$ the simply connected covering of $\cG$. We have seen that there is an exact sequence:
$$ H^1(C,\tilde{\cG})\to H^1(C,\cG) \to \overline{H^2(C,\cZ^{\fin})}\to 0$$
defined by the obstruction to lift a $\cG$-torsor to a $\tilde{\cG}$-torsor. Furthermore  we have seen that the  class in $\overline{H^2(C,\cZ^{\fin})}$ is locally constant. This implies that every class $d\in \overline{H^2(C,\cZ^{\fin})}$ defines an open and closed substack $\Bun_\cG^d$.

Thus we may assume that $S$ is connected and maps into $\Bun_\cG^d$ for
some $d$. If $d=0$ we may argue as before: The obstruction to lift our $\cG$-torsor $\cP$ to
$\tilde{\cG}$ vanishes after passing to a covering $S^\prime\to S$. Thus we
can find $S^\prime\to S$ such that $\cP$ lifts to a $\tilde{\cG}$-torsor $\tilde{\cP}$
on $C\times S^\prime$ and for $\tilde{\cG}$-torsors we can apply Theorem \ref{EinfachzusammenhaengendeUniformisierung} (the uniformization theorem).

Now if $d$ is arbitrary we can apply a similar argument to reduce the question to the
situation where $S$ is a geometric point: Assume again that $S$ is connected and that
$\cP$ is a family of $\cG$-bundles on $S$. Choose a point $0\in S$ and consider the
group scheme $\cG_0:=\Aut_{\cG}(\cP_0)$ over $C$. The simply connected covering $\tilde{\cG}_0$ again defines a sequence
$\cZ \to \tilde{\cG}_0 \to \cG_0$. (Here the kernel is given by the same group scheme $\cZ$ as before, because $\cG_0$ is an inner form, i.e., it is in the image $H^1(C,\cG)\to H^1(C,\Aut(\cG))$ and inner automorphisms act trivially on the center.)

The trivial $\cG_0$-torsor $\cP_0$ certainly lifts to
a $\tilde{\cG}_0$-torsor (see e.g. \cite{Giraud} Prop. 4.3.4) and so we can apply the
same reasoning as before to see that the $\cG_0$-torsor $\cP\times^{\cG} \cP_0$ lifts to
a $\tilde{\cG}_0$-torsor locally on $S$. Passing to a further covering we therefore 
find that $\cP|_{(C-x)\times S^\prime} \cong \cP_0\times S^\prime|_{(C-x)\times
S^\prime}$. Thus it is sufficient to show that the $\cG$-torsor $\cP_0$ is trivial on $C-x$.

To prove this, we show that one can modify $\cP_0$ at the point $x$, such that the obstruction $d$ vanishes. 
This is implied by the surjectivity of the Kottwitz homomorphism: The isomorphism $\pi_0(L\cG_x) \to \pi_1(\cG_{\overline{K}_x})_{\Gal({K}_x^{\textrm{sep}}/K_x)}$ is defined by a reduction to tori and thus we can apply Lemma \ref{pi0_Tori} and Remark \ref{lokale Beschreibung der Liftungsgerbe} to conclude that $\pi_0(L\cG_x) \to \overline{H^2(C,\cZ^{\fin})}$ is surjective.
\end{proof}
As a corollary of the above proof we can also verify the conjecture on the connected components of $\Bun_\cG$:
\begin{theorem}
Let $\cG$ be a quasi-split semisimple parahoric Bruhat-Tits group scheme. Then
$$\pi_0(\Bun_\cG) = \pi_1(\cG_{\overline{\eta}})_{\Gal({k(\eta)}^{\textrm{sep}}/k(\eta))}.$$
\end{theorem}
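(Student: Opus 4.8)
The plan is to deduce this from the uniformization theorem (Theorem~\ref{DS}) together with the analysis of the lifting obstruction already carried out, in exactly the way the simply connected case was handled. First I would use Theorem~\ref{DS}: the map $\Gr_{\cG,x}\to\Bun_\cG$ is formally smooth and surjective (on geometric points, after fppf covers), so $\pi_0(\Bun_\cG)$ is a quotient of $\pi_0(\Gr_{\cG,x})$, and more precisely two $\cG$-bundles lie in the same connected component if and only if they can be connected through bundles obtained by gluing at a single point $x$. Combined with Corollary~\ref{Glatte_Umgebungen} and the fact that $\bA^n$ is connected, the connected components of $\Bun_\cG$ are precisely the images of the connected components of $L\cG_x$ under the gluing map $\pr_x$. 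By \cite{Pappas-Rapoport} Theorem~5.1 (quoted in the proof of Lemma~\ref{pi0_Tori}) we have $\pi_0(L\cG_x)\cong X_*(\cG_{\overline{K}_x})_{\Gal(K_x^{\sep}/K_x)}=\pi_1(\cG_{\overline{K}_x})_{\Gal(K_x^{\sep}/K_x)}$, so it remains to identify this local group of components with the global one $\pi_1(\cG_{\overline\eta})_{\Gal(k(\eta)^{\sep}/k(\eta))}$ via the gluing map.

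Next I would show the composite $\pi_0(L\cG_x)\to\pi_0(\Bun_\cG)$ is an isomorphism by identifying both sides with $\overline{H^2(C,\cZ^{\fin})}$. On the one hand, the exact sequence from the proof of Theorem~\ref{DS},
$$H^1(C,\tilde\cG)\to H^1(C,\cG)\to\overline{H^2(C,\cZ^{\fin})}\to 0,$$
together with the connectedness of $\Bun_{\tilde\cG}$ (Theorem~\ref{EinfachzusammenhaengendeUniformisierung} and the remark following it, valid since $\tilde\cG$ is simply connected) shows that the lifting obstruction $d\colon\Bun_\cG\to\overline{H^2(C,\cZ^{\fin})}$ is a bijection on $\pi_0$: two bundles with the same obstruction class lie in the same component because one can lift to $\tilde\cG$-bundles (after modifying at $x$ and passing to covers, exactly as in the proof of Theorem~\ref{DS}) and $\Bun_{\tilde\cG}$ is connected, while distinct obstruction classes give disjoint open-closed substacks. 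On the other hand, Lemma~\ref{pi0_Tori}(2) (via the reduction-to-tori description of the Kottwitz homomorphism, and Remark~\ref{lokale Beschreibung der Liftungsgerbe} for the local description of the obstruction) gives $\pi_1(\cG_{\overline K_x})_{\Gal(K_x^{\sep}/K_x)}\cong\overline{H^2(C,\cZ^{\fin})}$, and the composite $\pi_0(L\cG_x)\to\pi_0(\Bun_\cG)\xrightarrow{\ d\ }\overline{H^2(C,\cZ^{\fin})}$ is precisely this local-to-global comparison, which is surjective by the end of the proof of Theorem~\ref{DS}.

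Finally I would replace the local group $\pi_1(\cG_{\overline K_x})_{\Gal(K_x^{\sep}/K_x)}$ by the generic one $\pi_1(\cG_{\overline\eta})_{\Gal(k(\eta)^{\sep}/k(\eta))}$. Since $\cG$ is semisimple over $\eta$ and over $K_x$ (both lie in the locus where $\cG$ is semisimple, as $x\notin\Ram(\cG)$ for the quasi-split case—or more carefully, $\pi_1$ is an invariant of the generic fiber and is insensitive to the parahoric structure), the geometric fundamental group $\pi_1(\cG_{\overline\eta})=\pi_1(\cG_{\overline K_x})$ is the same finite abelian group with its action of the absolute Galois group; the inclusion $k(\eta)\hookrightarrow K_x$ induces a map of Galois groups and hence a map on coinvariants $\pi_1(\cG_{\overline\eta})_{\Gal(k(\eta)^{\sep}/k(\eta))}\to\pi_1(\cG_{\overline K_x})_{\Gal(K_x^{\sep}/K_x)}$. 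That this is an isomorphism, and compatible with the comparison above, follows because any $\cG$-torsor is generically trivial (Steinberg--Borel--Springer) so its component class is detected by a single adelic cocycle, and the class in $\overline{H^2(C,\cZ^{\fin})}$ depends only on the generic data; the independence of the point $x$ was already established in the proof of Theorem~\ref{EinfachzusammenhaengendeUniformisierung} (gluing at different points produces bundles in the same component). The main obstacle I expect is bookkeeping: carefully matching the two identifications of $\overline{H^2(C,\cZ^{\fin})}$—one via the global exact sequence and $\pi_0(\Bun_{\tilde\cG})=0$, one via the local Kottwitz map and Remark~\ref{lokale Beschreibung der Liftungsgerbe}—so that the composite $\pi_0(L\cG_x)\to\pi_0(\Bun_\cG)$ really is the asserted isomorphism rather than merely a surjection; all the ingredients are in place, but the diagram-chase identifying the various boundary maps requires care.
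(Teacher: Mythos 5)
Your first step is essentially the paper's argument and is fine in outline: the lifting obstruction gives an open--closed decomposition of $\Bun_\cG$ into substacks $\Bun_\cG^d$, $d\in\overline{H^2(C,\cZ^{\fin})}$, each nonempty (surjectivity of the Kottwitz map at $x$) and connected, so $\pi_0(\Bun_\cG)\cong\overline{H^2(C,\cZ^{\fin})}$. (One caveat: to see that two bundles $\cP_1,\cP_2$ with the same class $d$ lie in the same component you should, as in the proof of Theorem \ref{DS}, twist by the inner form $\Aut_\cG(\cP_1)$ so that $\Isom_\cG(\cP_1,\cP_2)$ has trivial obstruction and then use connectedness of $\Bun_{\tilde\cG}$; ``modifying at $x$'' does not do this, since a modification at $x$ changes the Kottwitz class and hence in general moves the bundle to a different component.)

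The genuine gap is in your identification of $\overline{H^2(C,\cZ^{\fin})}$ with $\pi_1$-coinvariants. Lemma \ref{pi0_Tori}(2) gives $\overline{H^2(C,\cZ^{\fin})}\cong\pi_1(\cG_{\overline\eta})_{\Gal(k(C)^{\sep}/k(C))}$, the coinvariants for the \emph{global} Galois group, obtained from the global computation $\pi_0(\Bun_\cT)\cong X_*(\cT)_{\Gal(k(C)^{\sep}/k(C))}$; you quote it as giving the \emph{local} coinvariants $\pi_1(\cG_{\overline K_x})_{\Gal(K_x^{\sep}/K_x)}$, and you then assert that the local and global coinvariants agree and that $\pi_0(L\cG_x)\to\pi_0(\Bun_\cG)$ is an isomorphism. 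This is false in general: the decomposition group at $x$ is only a subgroup of $\Gal(k(C)^{\sep}/k(C))$, so one only has a surjection $\pi_1(\cG_{\overline\eta})_{\Gal(K_x^{\sep}/K_x)}\tto\pi_1(\cG_{\overline\eta})_{\Gal(k(C)^{\sep}/k(C))}$ (also note your comparison map ``induced by $k(\eta)\hookrightarrow K_x$'' is written in the wrong direction). For example, for the quasi-split unitary group $\SU_{\tilde C/C}(n)$ of a double cover and a point $x$ that splits in $\tilde C$, the local Galois action on $\pi_1\cong\bZ/n$ is trivial, so $\pi_0(L\cG_x)\cong\bZ/n$, while the global coinvariants are $\bZ/n$ modulo negation; correspondingly $\pi_0(\Gr_{\cG,x})\to\pi_0(\Bun_\cG)$ is surjective but not injective. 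The theorem does not need these claims: once you have $\pi_0(\Bun_\cG)\cong\overline{H^2(C,\cZ^{\fin})}$, the correctly quoted Lemma \ref{pi0_Tori}(2) finishes the proof, which is exactly what the paper does (its alternative argument instead uses the relative Grassmannian $\GR_\cG|_U\tto\Bun_\cG$ and identifies $\pi_0(\GR_\cG|_U)$ with the monodromy coinvariants of $\pi_1$, again the global ones, avoiding any claim of injectivity at a single point).
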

\begin{proof}
In the above proof we have seen that for every $d\in \overline{H^2(C,\cZ^{\fin})}$ the stack $\Bun_\cG^d$ is connected and nonempty, and in Lemma \ref{pi0_Tori} we have seen that $\overline{H^2(C,\cZ^{\fin})}\cong \pi_1(\cG_{\overline{\eta}})_{\text{Gal}({k(\eta)}^{\textrm{sep}}/k(\eta))}$. 

Without reference to the proof above we can use the global Gra\ss mannian $\GR_\cG\to C$ to get an alternative argument:

Again denote by $U\subset C$ an open subset such that $\cG|_U$ is semisimple. Then the surjection $\GR_\cG|_U \tto \Bun_\cG$ induces a surjection $\pi_0(\GR_\cG|U) \tto \pi_0(\Bun_\cG).$ We claim that $\pi_0(\GR_\cG|_U) \cong \pi_1(\cG_{\overline{\eta}})_{\text{Gal}({k(\eta)}^{\textrm{sep}}/k(\eta))}$. To see this, observe that the formation of $\GR$ commutes with \'etale base change. Thus to compute the \'etale sheaf on $U$ given by the connected components of the fibres $\GR_\cG\to C$, we may (after possibly shrinking $U$) reduce to the case that $\cG$ is a split group scheme. In this case the connected components are canonically isomorphic to $\pi_1(\cG)$ by the Kottwitz homomorphism on the fibres (this morphism is constant, since the definition of the Kottwitz homomorphism uses a reduction to the case of tori). In particular the connected components of $\GR_\cG|_U$ are given by the orbits of the Galois group on $\pi_1(\cG)$.

The inverse map $\pi_0(\Bun_\cG) \to \pi_1(\cG_{\overline{\eta}})_{\text{Gal}({k(\eta)}^{\textrm{sep}}/k(\eta))}$ is given by the obstruction class in $\overline{H^2(C,\cZ^\fin)}$ and Lemma \ref{pi0_Tori}.
\end{proof}

\section{Line bundles on $\Bun_\cG$}

Rapoport and Pappas conjectured that for simply connected and absolutely simple groups $\cG$ splitting over a tamely ramified extension of $k(C)$ there should be an exact sequence:
$$ 0 \to \prod_{x\in \text{Bad}(\cG)} X^*(\cG_x) \to \Pic(\Bun_\cG) \map{c} \bZ \to 0$$
where the map $c$ should be given by the so called central charge, defined for any point $x\in C$ as follows (see \cite{Pappas-Rapoport} Remark 10.2): First, the map $\Gr_{\cG,x}\to \Bun_\cG$ defines a map $\Pic(\Bun_\cG) \to \Pic(\Gr_{\cG_x})$ and there is a canonical morphism $\Pic(\Gr_{\cG,x})=\bZ^{N_x} \to \bZ$ which can be described explicitly in terms of the root datum of $\cG_x$. In the case of constant groups $G$ this morphism is usually described in terms of a central extension of $LG$. Namely the obstruction to lift the action of $LG$ to the line bundles on $Gr_{G,x}$ defines a central extension $\widetilde{LG}$ of $LG$ and the central charge homomorphism is defined by the weight of the action of the central $\bG_m\subset \widetilde{LG}$ on the line bundle. A similar description also holds in the general case, since we will see that the obstruction to the existence of an $L\cG_x$-linearization of the line bundles on $\Gr_{\cG,x}$ only depends on its central charge, so again there exists one central extension of $L\cG_x$ acting on all line bundles on $\Gr_{\cG,x}$.

In this section we will denote by $\Pic(\Bun_{\cG})$ the group of line bundles, rigidified by the choice of a trivialization over the trivial $\cG$-torsor. This is useful in order to compare the Picard groups of schemes mapping to $\Bun_{\cG}$.

The assumptions on $\cG$ will be used in our proofs, since we will apply the computation of $\Pic(\Gr_{\cG,x})$ given in \cite{Pappas-Rapoport} and we will also need the fact that $\Gr_{\cG,x}$ is reduced and connected.

First of all, note that there exists a natural morphism $$\prod_{x\in \text{Bad}(\cG)} X^*(\cG_x) \incl{} \Pic(\Bun_\cG),$$  which can be constructed as follows: For any point $x\in C$ restriction to $x$ defines a morphism $\Bun_\cG \to B\cG_x$. Now $\Pic(B\cG_x)\cong X^*(\cG_x)$ by definition, since a coherent sheaf on $B\cG_x$ is the same thing as a representation of $\cG_x$. Thus we get a pull-back map $\prod_{x\in \text{Bad}(\cG)} X^*(\cG_x) \to \Pic(\Bun_\cG)$.

To see that this morphism is injective, consider the composition $\Gr_y \to \Bun_\cG \to B\cG_x$. By Lemma \ref{Uniformisierungs_Abbildung}, $\Gr_y$ is the classifying space for torsors on $C$ together with a trivialization on $C-y$. In particular for $y\neq x$ the above map is given by the  trivial $\cG_x$-torsor and therefore induces the $0$-map on the Picard groups.

Furthermore we claim that for $x=y$ the map $\Gr_x \to B\cG_x$  defines an injection $X^*(\cG_x) \incl{} \Pic(\Gr_x)$. To simplify notations let us assume that $L^+\cG(k)\subset L\cG_x(k)$ is an Iwahori subgroup (otherwise we can find a smaller Bruhat--Tits group scheme $\cG^\prime \to \cG$ and check injectivity after pulling back everything to $\Bun_{\cG^\prime}$).
Recall that for any affine simple root $\alpha$ of $L\cG_x$ we get an embedding $i_\alpha\colon \bP^1\to \Gr_x$ coming from an embedding of a parahoric group $L^+P_i \to L\cG$. This way we get a commutative diagram:
$$\xymatrix{
L^+P_i \ar[r]\ar[d] & L\cG_x \ar[r]\ar[d] & \text{\rm pt} \ar[d]\\
\bP^1=L^+P_i/L^+\cG \ar[r]& \Gr_x\ar[r] & B\cG_x .\\
}$$
So we see that the degree of the restriction of the bundle given by a character $\lambda$ on $\bP^1$ is given by the restriction of this character to the root-subgroup $\bG_m \to P_i$. This constructs the left hand side of the exact sequence of the statement of Theorem \ref{PicBunG}.

For the right hand side we would like to argue as in the case of constant groups \cite{Faltings_Loopgroups}, but in order to take care of the ramification of $\cG$ we need the relative affine Gra\ss maninan $\GR_\cG$:

\begin{lemma} Assume that $k$ is algebraically closed, $\cG_{k(C)}$ is simply connected and absolutely simple and splits over a tamely ramified extension of $k(C)$.
\begin{enumerate}
\item The relative Gra\ss mannian $\GR_{\cG}\to C$ is ind-proper (this holds for general $\cG$).
\item The relative Picard group $\Pic(\GR_\cG/C)\to C$ is an \'etale sheaf over $C$ and there exists a quotient $c:=\Pic(\GR_\cG/C)/\prod X^*(\cG_x)$. 
\item The fibres of $c$ are isomorphic to $\bZ$ and the restriction of $c$ to $C-\Ram(\cG)$ is constant.
\end{enumerate}
\end{lemma}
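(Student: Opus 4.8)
The plan is to reduce each assertion to the corresponding statement for a single geometric fibre $\Gr_{\cG,x}$ -- handled by \cite{Pappas-Rapoport} (ind-properness, and the computation of $\Pic(\Gr_{\cG,x})$ and of the central charge in Section~10) -- and then to control how these fibres vary over $C$.

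For (1) I would use the faithful representation $\rho\colon\cG\hookrightarrow\cH:=\GL(\cE)\times\bG_m$ of Assumption \ref{Annahme}. As in the proof of Proposition \ref{LG} it induces a locally closed embedding $\GR_\cG\hookrightarrow\GR_\cH$, and $\GR_\cH$ is ind-proper over $C$: a point of $\GR_{\GL(\cE)}$ is a vector bundle on $C_R$ trivialized away from the section together with a modification of bounded length along it, so the corresponding piece of the exhaustion embeds into a relative Quot (``lattice'') scheme, which is projective over $C$, while the $\bG_m$-factor contributes only the discrete group $\bZ$. What remains is to promote ``locally closed'' to ``closed'': this is where parahoricity of $\cG$ is used, either by invoking the fibrewise ind-properness of $\Gr_{\cG_x}$ from \cite{Pappas-Rapoport}, or by checking the valuative criterion directly on torsors (a $\cG$-torsor on $C_K$ trivialized away from $s$ extends, after a finite extension of the valuation ring, since $\Gr_{\cG,x}$ is exhausted by proper affine Schubert varieties). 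Then $\GR_\cG$ is a closed sub-ind-scheme of an ind-proper $C$-scheme, hence ind-proper.

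For (2), $\Pic(\GR_\cG/C)$ is a sheaf for the étale topology on $C$ by étale descent for line bundles, and it is formally étale over $C$: along a nilpotent thickening $T_0\hookrightarrow T$ over $C$ a line bundle on $\GR_\cG\times_C T_0$ lifts uniquely to $\GR_\cG\times_C T$, because $\GR_\cG\to C$ is ind-proper and each affine Schubert variety has vanishing higher coherent cohomology of its structure sheaf, so $H^i(\Gr_{\cG,x},\cO)=0$ for $i>0$. Combined with Grothendieck's existence theorem applied to the pieces of the exhaustion, this also identifies the fibre of $\Pic(\GR_\cG/C)$ at a geometric point $x$ with $\Pic(\Gr_{\cG,x})\cong\bZ^{N_x}$, the finitely generated free abelian group of \cite{Pappas-Rapoport} Section~10. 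Restriction to the base-point section defines a map $\GR_\cG\to B\cG$ over $C$ and hence a morphism of étale sheaves $\prod_{x\in\Ram(\cG)} X^*(\cG_x)\to\Pic(\GR_\cG/C)$ (using $\Pic(B\cG_x)\cong X^*(\cG_x)$); it is injective on fibres by the argument already given in this section with the curves $i_\alpha\colon\bP^1\to\Gr_x$, hence injective, so the quotient étale sheaf $c:=\Pic(\GR_\cG/C)/\prod_{x\in\Ram(\cG)} X^*(\cG_x)$ exists.

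For (3), the fibre of $c$ at a geometric point $x$ is $\Pic(\Gr_{\cG,x})/X^*(\cG_x)$. Over $U:=C-\Ram(\cG)$ the group $\cG$ is semisimple and simply connected -- the kernel $\cZ_U$ of $\tilde\cG_U\to\cG_U$ is trivial because it is flat over $U$ with trivial generic fibre, since $\cG_{k(C)}$ is simply connected -- and of constant connected Dynkin type, since $\cG_{k(C)}$ is absolutely simple and $U$ is connected. After a finite étale cover $U'\to U$ it is the constant split form, so $\GR_\cG|_{U'}\cong\Gr_{G_0}\times U'$ and $\Pic(\Gr_{G_0})\cong\bZ$, generated by the ample generator; as this generator is canonical it descends, so $\Pic(\GR_\cG/C)|_U$ is the constant sheaf $\bZ$, and since $X^*(\cG_x)=0$ there we get $c|_U\cong\underline{\bZ}$. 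At $x\in\Ram(\cG)$, \cite{Pappas-Rapoport} Section~10 provides $\Pic(\Gr_{\cG,x})\cong\bZ^{N_x}$ together with the central charge $\Pic(\Gr_{\cG,x})\to\bZ$, whose kernel is exactly the image of $X^*(\cG_x)$ -- the line bundles pulled back from $B\cG_x$, detected on the images of the $i_\alpha$ -- so the fibre of $c$ is again $\bZ$. I expect the main obstacle to be the closedness step in (1): everything in (2) and (3) rests on $\GR_\cG\to C$ being ind-proper so that $\Pic(\GR_\cG/C)$ is a reasonable object, and ruling out that $\GR_\cG$ is merely locally, not globally, closed inside the affine Grassmannian of $\GL(\cE)\times\bG_m$ is the one place where the parahoricity hypothesis genuinely enters; note also that, as remarked in the introduction, $\GR_\cG\to C$ need not be flat at ramified points, which is why the argument proceeds through formal étaleness rather than representability of the relative Picard scheme.
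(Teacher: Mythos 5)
Your parts (1) and (3) are essentially sound and close in spirit to the paper (which is even terser on (1), simply asserting that $\GR_\cG$ was constructed as a closed sub-ind-scheme of an ind-projective Gra\ss mannian; your concern about ``locally closed versus closed'' and the fix via fibrewise ind-properness from \cite{Pappas-Rapoport} is reasonable). In (3) the paper passes to a covering of $C-\Ram(\cG)$ over which $\cG$ becomes an \emph{inner} form and then uses the canonicity of the ample generator of central charge $1$; your stronger claim that $\cG$ becomes the \emph{constant split} form after a finite \'etale cover is an isotriviality statement that needs an argument (a reductive group scheme over $U$ with split generic fibre is not obviously constant), but this is a fixable slip, since \'etale-local splitness, or the paper's inner-form version, suffices.

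The genuine gap is in (2). You justify the sheaf property by ``\'etale descent for line bundles'', but the presheaf $T\mapsto \Pic(\GR_\cG\times_C T)$ is not a sheaf for that reason: a section of the sheafified relative Picard functor need not be represented by an actual line bundle (the obstruction lies in $H^2(T,\bG_m)$), two representing bundles can differ by a pullback from $T$ (ambiguity in $H^1(T,\pi_{w,*}\bG_m)$), and for an ind-scheme one must moreover choose the bundles on the exhausting pieces compatibly. Dealing with exactly this is the bulk of the paper's proof: it first produces a torus $\cT\subset\cG|_U$ over opens $U\subset C$ (via N\'eron models, conjugation, and the uniformization theorem), uses it to build global Schubert varieties $S_w$ \emph{flat} over $U$ which exhaust $\GR_\cG$ (showing along the way that $\GR_\cG$ is the closure of its restriction to the unramified locus, by extending root-subgroup elements over $U$), proves that the restriction maps $\pi_{w^\prime,*}\cO_{S_{w^\prime}}\to\pi_{w,*}\cO_{S_w}$ factor through $\cO_U$, and then runs the Bosch--L\"utkebohmert--Raynaud style exact sequence
$$H^1(T,\pi_{w,*}\bG_m)\to H^1(S_{w,T},\bG_m)\to \Pic(S_w/U)(T)\to H^2(T,\pi_{w,*}\bG_m)\to H^2(S_{w,T},\bG_m)$$
together with the canonical section of $\GR_\cG\to C$ to produce, canonically and compatibly in $w$, rigidified line bundles from any compatible family of sections. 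Your substitute -- formal \'etaleness from $H^{>0}(S_w,\cO)=0$ plus Grothendieck existence -- only controls infinitesimal thickenings; it gives neither the sheaf condition for genuine \'etale coverings nor the compatibility over the ind-structure, and the cohomology vanishing is itself asserted without justification (it does hold in the tame case via the rationality of Schubert varieties in \cite{Pappas-Rapoport}, but that must be invoked). Note also that the flat exhaustion by the $S_w$ and the canonical lifting of compatible sections to honest line bundles are used again in the proof of Theorem \ref{PicMG}, so this part of the argument cannot simply be bypassed.
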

\begin{proof}
For the first part we only have to recall that we constructed $\GR_\cG$ as a closed subscheme of an affine Gra\ss mannian for the constant group $\GL_n$, which is ind-projective.

To prove (2), we first consider this Zariski-locally over open subsets $U\subset C$, as in the construction of $\GR_\cG$. In particular we may assume that there is a function $t$ on $U\times U$, which is a local parameter along the diagonal. We want to write $\GR_\cG=\varinjlim Z_i$ where $Z_i\subset Z_{i+1} \subset \cdots$ is a chain of closed embeddings of proper, connected, reduced schemes, flat over $U$. 

To get proper, connected schemes, we want to use Schubert varieties, so we need to find a torus $\cT\subset\cG$ over $U$: 
Since $\cG_{k(C)}$ is quasi-split, we can choose $\cT_V\subset \cG_V$ for a small enough $V\subset U$. Denote by $\cT$ the connected N\'eron model of $\cT_V$ over $U$. For every $x\in U-V$ we know that all maximal tori contained in some a Borel-subgroup are conjugated, so $\cT_{K_x}$ is conjugated to the torus used to construct the Bruhat-Tits group $\cG$. Thus the elements in $\cG(K_x)$ needed to conjugate $\cT_{K_x}$ define a $\cG$-torsor $\cP$ such that for $\cG_{\cP}=\Aut_{\cG}(\cP)$ we have $\cT\subset \cG_\cP$. By the uniformization theorem we know that there exists a trivialization of $\cP$ over every open subset $U\subsetneq C$. So we can find an isomorphism $\cG_\cP|_U \cong \cG_U$ and thus an inclusion $\cT\subset \cG|_U$. 

Moreover, $\cT$ being an induced torus we can write $\cT=\Res_{D/U} \bG_m$ for some finite, generically \'etale covering $D\to U$ and we have $\bG_{m,U}^{\pi_0(D)}\subset \cT \subset \cG$. Also write $\mathring{U}:=U-\Ram(\cG)$.
We can already deduce that $\GR_\cG$ is the closure of $\GR_{\cG,\mathring{U}}$, because on the one hand all fibres are reduced by \cite{Pappas-Rapoport} and we claim that all geometric points of a special fibre $x\in \Ram(\cG)$ lie in the closure.
To see this last point, note that we have seen that any geometric point of $\Gr_{\cG_x}$ can be lifted to an element of $L\cG_x$ (Proposition \ref{LG} (3)) and these elements can be written as products of elements in root subgroups $L\cU_{a,x}$. The torus $\cT$ defines root subgroups $\cU_a$ over $U$ and as varieties these are locally trivial bundles of affine spaces over $U$, so that any element in $L\cU_{a,x}(k)$ can be extended to a local section of $L\cU_a$. The product of these elements gives the extension we were looking for. In particular, given any presentation $\GR_{\cG,\mathring{U}}=\varinjlim Z_i$ as an inductive limit of closed subschemes, the limit of the closures of the $Z_i$ in $\GR_{\cG,U}$ will be $\GR_{\cG,U}$.

To define global Schubert varieties, note that our choice of a local parameter defines for every $w\in X_*(\bG_m^{\pi_0(D)})$ a point in $L\cT$ and this defines $w\in \GR_\cG(U)$.  We define the Schubert cell $C_w:=L^+\cG_{\mathring{U}}w \subset \GR_{\cG_U}$ and its closure $S_w:=\overline{C_w}\subset\GR_{\cG_U}$.
We see that any fibre of $S_w$ over $\mathring{U}$ is the Schubert variety of the fibre. Since the inductive limit of the fibrewise Schubert varieties exhausts the fibres of $\GR_\cG$ by \cite{Pappas-Rapoport} (and our $w$ form a cofinal system), we find that $\varinjlim S_w=\GR_{\cG_U}$. Furthermore, the canonical section of $\GR_\cG\to C$ given by the trivial $\cG$-bundle on $C$ factors through all $S_w$. Finally, since $U$ is a smooth curve, the projection $\pi_w\colon S_w\to U$ is flat.


We claim that this suffices to prove that the relative Picard functor $\Pic(\GR_\cG/C)$ is an \'etale sheaf (we adapt the arguments of \cite{BLR}, section 8.1):
By definition a line-bundle on an ind-scheme $\varinjlim Z_i$ is a family of line bundles $\cL_i$ on each of the $Z_i$, together with isomorphisms of the restrictions $\cL_i|_{Z_j} \cong \cL_j$ for all $j\leq j$. 

We use the $S_w$, which are flat over $U$. In particular for each $S_w$ we have an exact sequence:
$$H^1(U,\pi_{w,*}\bG_m) \to H^1(S_w,\bG_m) \to \Pic(S_w/U)(U) \to H^2(U,\pi_{w,*}\bG_m) \to H^2(S_w,\bG_m)$$
and the same holds for every base-change $T\to U$. Furthermore, if $\pi_{w,*}\cO_{S_w}=\cO_{S_w}$ then $\pi_{w,*}\bG_m=\bG_m$ so that the existence of our section $U\to S_w$ implies that the right hand arrow of the exact sequence is injective.

In our situation we know that for any $x\in U$ the fibre $(S_w)_x\subset \GR_{\cG,x}$ is a closed subscheme of finite type, so it will be contained in a connected, reduced Schubert variety $S_x$ of the fibre $\GR_{\cG_x}$. And $S_x$ will in turn be contained in $(S_{w^\prime})_x$ for some $w^\prime >w$. Therefore, the restriction map $\pi_{w^\prime,*} \cO_{S_{w^\prime}} \to \pi_{w,*}\cO_{S_w}$ will factor through $\cO_U$ and the same argument holds after any base-change $T\to U$.

This implies that given a compatible family of sections $(s_w)\in\Pic(S_w/U)(T)$ on some $T\to U$ we can lift these canonically to a line bundle on $\GR_{\cG}\times_U T$: First, let us check that each $s_w$ lies in the image of $H^1(S_{w,T},\bG_m)$. Given $w$ choose $w^\prime >w$ as in the previous paragraph. Then $s_w=s_{w^\prime}|_{S_{w,T}}$ and the obstruction to lift $s_w$ to $H^1(S_{w,T},\bG_m)$ lies in the image $H^2(T,\pi_{w^\prime,*} \bG_m) \to H^2(T,\bG_m) \to H^2(T,\pi_{w,*}\bG_m)$.
However, $H^2(T,\bG_m)\subset H^2(S_{w,T},\bG_m)$ so the obstruction must vanish and we can find line bundles $\cL_w^0$ on $S_{w,T}$, trivialized along our section of $S_w$, mapping to $s_w\in \Pic(S_w/U)(T)$.
Define $\cL_w:= \cL_{w^\prime}^0|_{S_{w,T}}$. Then for $v>w$ the bundles $\cL_v|_{S_{w,T}}$ and $\cL_w$ differ by an element of $H^1(T,\pi_{w,*}\bG_m)$, but as before this element lies in the image of $H^1(T,\bG_m)$, so it has to be trivial, because both bundles are trivialized along our section.
This procedure gives for any family of sections $(s_w)$ canonical preimages in $H^1((S_w)_T,\bG_m)$. Thus we find that $\Pic(\GR_\cG/C)$ is an \'etale sheaf. Fiberwise we can apply \cite{Pappas-Rapoport} 10.1 to see that $\Pic(\GR_\cG/C)/\prod X^*(\cG_x)$ is isomorphic to $\bZ$.

Finally, the formation of $\GR$ commutes with \'etale base change. Over $U:=C-\Ram(\cG)$ we can find a covering $\pi\colon \tilde{U}\to U$ such that the restriction $\pi^*(\cG)$ is an inner form. Therefore $\pi^*(c)$ is the constant sheaf $\bZ$, with canonical generator given by the ample line bundle of central charge $1$. In particular $c|_{U}$ is constant.
\end{proof}

\begin{remark}
\begin{enumerate}
 \item Denote by $s_0\colon C \to \GR_\cG$ the zero section. Then the composition $C \map{s_0} \GR_\cG \to \Bun_\cG$ is given by the trivial $\cG$-torsor, so we get a morphism $\Pic(\Bun_\cG)\to \Pic(\GR/C)$ of the groups of rigidified line bundles and thus a morphism $\Pic(\Bun_{\cG})\to H^0(C,c)$. Furthermore the last part of the lemma shows that for any point $x\in C-\Ram(\cG)$ the composition $\Pic(\Bun_{\cG})\to H^0(C,c) \to \Pic(\Gr_{\cG,x}) \map{c_x} \bZ$ does not depend on the chosen point $x$.
 \item Fix a point $x\in C-\Ram(\cG)$. Then we may find a bundle $\cL\in \Pic(\Bun_\cG)$ such that its image in $\Pic(\Gr_{\cG,x})=\bZ$ is non-zero as follows: Choose a faithful representation $\rho\colon \cG \to \SL(\cE)$, where $\cE$ is a vector bundle on $C$. This induces a morphism $\text{ind}_\rho\colon  \Bun_\cG \to \Bun_{\SL(\cE)}$. On $\Bun_{\SL(\cE)}$ we have the line bundle $\cL_{\det}$ given by the determinant of the cohomology of the universal vector bundle on $\Bun_{\SL(\cE)}\times C$. Thus we may define $\cL:=\text{ind}_\rho^*(\cL_{\det})$. It is known (\cite{Faltings_Loopgroups} p.42, \cite{BeauvilleLaszloConformalBlocks} p. 410, last two paragraphs) that the pull-back of $\cL_{\det}^{-1}$ to $\Gr_{\SL_n}=\Gr_{\SL(\cE),x}$ is an ample line bundle. Since $\rho$ induces an embedding $\Gr_{\cG,x} \to \Gr_{\SL_n,x}$ we see that the pull back of $\cL^{-1}$ to $\Gr_{\cG,x}$ is ample and in particular non trivial.

Together with the previous remark we find that there exists a minimal $n\in\bZ_{>0}$ such that the section $n\in H^0(C-\Ram(\cG),c)=\bZ$ extends to a global section i.e., a non-zero element in $H^0(C,c)$.
\item  Let $x\in \Ram(\cG)$, denote by $U_x:=C-\Ram(\cG)\cup \{x\} \subset C$ and $j_x\colon C-\Ram(\cG) \to U_x$ the open embedding. We claim that there exists $n_x\in \bZ_{>0}$ such that $c|_{U_x}\cong j_{x,*}(n_x\bZ) \coprod j_{x,!}(\bZ-\{n_x\bZ\})$ as a sheaf of sets: We already know that there exists a minimal nonzero section of $c|_{C-\Ram(\cG)}$, which extends to $U_x$. This defines an embedding $j_{x,*}(n_x\bZ) \coprod j_{x,!}(\bZ-\{n_x\bZ\}) \to c|_{U_x}$. Since the fiber of $c|_x\cong \bZ$ this must be an isomorphism. 
\item  The even unitary groups provide examples of groups for which the sheaf $c$ is not constant: Assume that $\text{char}(k)\neq 2$ and  choose a $\bZ/2\bZ$-covering $\tilde{C} \to C$, ramified at a nonempty set $\Ram\subset C$. Let $\cG=\SU_{\tilde{C}/C}(2n):=(\text{Res}_{\tilde{C}/C})(\SL_{2n})^\sigma\subset \text{Res}_{\tilde{C}/C} \SL_{2n}=:\cH$ be the corresponding unitary group. Pappas and Rapoport show (\cite{Pappas-Rapoport} Section 10.4) that for every $x\in \Ram(\cG)$ the induced map $\Gr_{\cG,x} \to \Gr_{\cH,x}$ defines an isomorphism of Picard groups. 

 If $y\in C-\Ram(\cG)$ is an unramified point, then over the formal completion $\widehat{\cO}_{C,y}$ we have $L\cH_y=L\SL_{2n} \times L\SL_{2n}$ and the action of $\bZ/2\bZ$ on $L\cH_y$ is given by permuting the factors and applying the transpose-inverse automorphism. In particular we find that $L\cG_y=L\SL_{2n}$, which is embedded as $A\mapsto (A,A^{t,{-1}})$ in $L\cH_y$. Therefore the corresponding map on Picard-groups is given by the sum $\Pic(\Gr_{\SL_{2n} \times \SL_{2n}}) = \bZ\times \bZ \to \bZ = \Pic(\Gr_{\SL_{2n}})$. 

For $x\in \Ram(\cG)$ we know by \cite{Pappas-Rapoport} Section 10.a.1 that the generator of $\Pic(\Gr_{\cH,x})$ is given by the determinant of the cohomology $\cL_{\det}$ of the corresponding universal vector bundle on $\Bun_{\tilde{C},\SL_n}=\Bun_\cH$.  For $y\in C-\Ram(\cG)$ the bundle $\cL_{\det}$ restricts to the diagonal element $(1,1)\in \Pic(\Gr_{\SL_{2n}\times \SL_{2n}})$. Thus we see that only twice the generator of the Picard group of $\Gr_{\cG,y}$ descends to $\Bun_{\cG}$.
\end{enumerate}
\end{remark}

\begin{theorem}\label{PicMG}
Assume that $\cG$ is simply connected, absolutely simple and splits over a tamely ramified extension of $k(C)$. Then there is an exact sequence:
$$ 0 \to \prod_{x\in \Ram(\cG)} X^*(\cG_x) \to \Pic(\Bun_\cG) \map{} \bZ \to 0.$$
\end{theorem}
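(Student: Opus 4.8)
The plan is to deduce the theorem from the short exact sequence of étale sheaves
$$0\to\prod_{x\in\Ram(\cG)}X^*(\cG_x)\to\Pic(\GR_\cG/C)\to c\to 0$$
on $C$ and a comparison between $\Pic(\Bun_\cG)$ and the global sections of $\Pic(\GR_\cG/C)$. Concretely, I would show that
$$\prod_{x\in\Ram(\cG)}X^*(\cG_x)\;\longrightarrow\;\Pic(\Bun_\cG)\;\longrightarrow\;H^0(C,c)$$
is a short exact sequence (the left map being the one constructed before the preceding Lemma, the right map the central-charge map of the preceding Remark, which at an unramified point $y$ is point-independent and computed by $c_y$, as in Theorem \ref{PicBunG}), and then invoke $H^0(C,c)\cong\bZ$: indeed $H^0(C,c)$ is the subgroup $n\bZ$ of $H^0(C-\Ram(\cG),c)=\bZ$, with $n$ the minimal positive integer of the preceding Remark, which is nonzero since $\mathrm{ind}_\rho^{*}(\cL_{\det})$ has positive central charge. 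Throughout I would use the uniformization theorem (Theorem \ref{DS}): after passing to the finite-type Schubert subschemes $S_w\subset\GR_\cG$ (which are flat over $C$) it makes $\Gr_{\cG,y}\to\Bun_\cG$ a faithfully flat covering, with $\Gr_{\cG,y}\times_{\Bun_\cG}\Gr_{\cG,y}\cong\Gr_{\cG,y}\times\cG(C-y)$, where $\cG(C-y)$ is the connected ``outer'' group of sections of $\cG$ over $C-y$.

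That the image of $\prod_xX^*(\cG_x)$ lies in the kernel of $\Pic(\Bun_\cG)\to H^0(C,c)$ is immediate: the line bundle attached to $\lambda\in X^*(\cG_x)$ restricts, on the fibre of $\GR_\cG$ over $x$, to the standard element of $X^*(\cG_x)\subset\Pic(\Gr_{\cG,x})$ and vanishes on every other fibre, so its image in $c=\Pic(\GR_\cG/C)/\prod_xX^*(\cG_x)$ is zero. For the reverse inclusion I would argue as in the constant case (\cite{Faltings_Loopgroups}, \cite{LaszloSorger}): if $\cL\in\Pic(\Bun_\cG)$ has central charge $0$ at an unramified point $y$ then, since $\cG_y$ is semisimple simply connected and $\Pic(\Gr_{\cG,y})\cong\bZ$ is detected by the central charge, $\cL|_{\Gr_{\cG,y}}$ is trivial; by faithfully flat descent $\cL$ corresponds to a cocycle on $\Gr_{\cG,y}\times\cG(C-y)$, and since $\Gr_{\cG,y}$ is reduced, connected and ind-proper we have $H^0(\Gr_{\cG,y},\cO)=k$, so the cocycle is a group homomorphism $\cG(C-y)\to\bG_m$. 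As $\cG$ is semisimple, $\cG(C-y)$ is generated by its unipotent root subgroups and a maximal torus, and a diagram chase with the $\cU^{-}\cT\cU$-decomposition identifies $\Hom(\cG(C-y),\bG_m)$ with $\prod_{x\in\Ram(\cG)}X^*(\cG_x)$; hence $\cL$ lies in the image of $\prod_xX^*(\cG_x)$, which is the required inclusion.

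For surjectivity of $\Pic(\Bun_\cG)\to H^0(C,c)$ I would descend a line bundle from $\GR_\cG$. The sheaf $\prod_xX^*(\cG_x)$ is a finite sum of skyscrapers, hence flasque, so $H^0(C,\Pic(\GR_\cG/C))\to H^0(C,c)$ is onto and $H^1(C,\prod_xX^*(\cG_x))=0$; pick a rigidified relative line bundle $\cM$ on $\GR_\cG$, defined over all of $C$, whose class in $H^0(C,c)=n\bZ$ is the generator (it exists as an honest line bundle because the zero-section $s_0$ of the preceding Remark trivializes the Brauer obstruction). I claim $\cM$ descends along $\GR_\cG\to\Bun_\cG$: since $\cG(C-y)$ is connected it acts trivially on the discrete group $\Pic(\Gr_{\cG,y})$, so $\cM$ is $\cG(C-y)$-invariant; the obstruction to upgrading this invariance to a $\cG(C-y)$-linearization (equivalently, a descent datum) is a character of $\cG(C-y)$, i.e. an element of $\prod_{x\in\Ram(\cG)}X^*(\cG_x)$, and its component at each $x\in\Ram(\cG)$ vanishes because $\cM$ extends across the fibre $\Gr_{\cG,x}$ --- the relevant obstruction gerbe is neutral over $\Spec\widehat{\cO}_{C,x}$ and over $C-x$, hence trivial, exactly as in the argument of Remark \ref{lokale Beschreibung der Liftungsgerbe}. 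Thus $\cM$ descends to $\cL\in\Pic(\Bun_\cG)$, whose image in $H^0(C,c)$ is the generator, so the map is onto. Combining the three steps with $H^0(C,c)\cong\bZ$ gives the exact sequence of the theorem.

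The step I expect to be the main obstacle is the computation $\Hom(\cG(C-y),\bG_m)=\prod_{x\in\Ram(\cG)}X^*(\cG_x)$ together with its dual counterpart --- the componentwise vanishing of the linearization obstruction for $\cM$ --- since both require pushing the ``generated by root subgroups'' structure of $\cG(C-y)$, and the neutrality of the relevant obstruction gerbes at the ramified points, through the case of a non-constant (twisted) group scheme, with due care in positive characteristic. The other ingredients --- flatness of the $S_w$ over $C$, reducedness and connectedness of the fibres of $\GR_\cG$ from \cite{Pappas-Rapoport}, vanishing of $H^1$ of a skyscraper, and the identification $H^0(C,c)\cong\bZ$ --- are formal given the preceding Lemma and Remarks.
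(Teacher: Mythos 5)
Your outline follows the same general strategy as the paper (uniformization, see--saw, reduction of the kernel computation to characters of an ``outer'' group, the determinant bundle to get a nonzero image), but the two steps you yourself flag as delicate are exactly where the argument breaks, and the paper's proof is structured precisely to avoid them. The crucial one is the claimed identification $\Hom(\cG(C-y),\bG_m)\cong\prod_{x\in\Ram(\cG)}X^*(\cG_x)$: a ``diagram chase with the $\cU^-\cT\cU$-decomposition'' does not work here, because the big cell only generates the group over a field (or a local ring), whereas $\cG(C-y)$ consists of sections over the affine curve $C-y$, and such section groups need not be generated by values of root subgroups, nor need a section land in the open cell. The paper circumvents this by \emph{first} twisting $\cL$ by an element of $\prod_x X^*(\cG_x)$ (and by the fixed bundle of minimal central charge) so that its pullback to $\Gr_{\cG,x}$ is trivial at \emph{every} point, including the ramified ones, and then passing to the limit over all finite sets of points: the resulting character lives on $L^{out}_{k(C)}\cG$, i.e.\ on $\cG(k(C))$, where quasi-splitness and simple connectedness over the field $k(C)$ give generation by the unipotent root groups $U_a(k(C))$, characters of $\bG_a$ vanish, and a reducedness argument plus the smooth atlas finish the triviality. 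If you want to keep your single-point setup you would have to prove the character-group computation for $\cG(C-y)$ from scratch; as written this is a genuine gap, not a routine verification.

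The second problem is your surjectivity step onto $H^0(C,c)$. First, the obstruction to upgrading invariance of $\cM|_{\Gr_{\cG,y}}$ to a $\cG(C-y)$-linearization is \emph{not} a character of $\cG(C-y)$: it is the class of the induced central extension of $\cG(C-y)$ by $\bG_m$ (the restriction of the Mumford/Kac--Moody extension of the loop group); characters only measure the ambiguity between linearizations once one exists. Splitting this extension over the outer group is a nontrivial theorem even for constant groups, and the gerbe-neutrality argument of Remark \ref{lokale Beschreibung der Liftungsgerbe} (which concerns lifting gerbes for isogenies) does not transpose to it. Second, the statement you are trying to prove is stronger than the theorem: the paper only shows that $\Pic(\Bun_\cG)/\prod_x X^*(\cG_x)$ is infinite cyclic, with the right-hand map a (possibly proper) multiple of the central charge; it does not claim that the minimal central charge realized by an actual line bundle on $\Bun_\cG$ equals the minimal value $n$ allowed by $H^0(C,c)$, and this is deliberately left open. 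Fortunately surjectivity onto $H^0(C,c)$ is also unnecessary for the stated exact sequence: once the kernel of $\Pic(\Bun_\cG)\to H^0(C,c)$ is identified with the image of $\prod_x X^*(\cG_x)$ and the image is nonzero (via $\mathrm{ind}_\rho^*(\cL_{\det})$), the quotient is a nonzero subgroup of $\bZ$ and hence isomorphic to $\bZ$. So you should drop the surjectivity claim and repair the kernel computation along the lines of the paper's limit-over-all-points argument.
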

To prove this result we need the analog of the loop group $L\cG$ for the sections of $\cG$ over open subsets $U\subset C$:
\begin{lemma}
Let $\cX\to C$ be a closed subscheme of $\bA^N\times C$, where $N$ is any integer. For any non-empty open $U\subset C$ the fpqc-sheaf 
$$ L^{out}_{U}\cX:= \cX(T\times U)$$
is an ind-scheme. The same holds for the limit over all $U\subset C$: $$L^{out}_{k(C)}\cX:=\varinjlim_{U\subset C} L^{out}_U\cG.$$
\end{lemma}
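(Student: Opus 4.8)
The plan is to reduce the statement about $L^{out}_U\cX$ to the analogous statement for loop groups, which is essentially Proposition \ref{LG}, by running the same infinite-jet argument with $U$ in place of the punctured formal disc. First I would observe that the question is local on $U$: covering $U$ by affine opens, a section of $\cX$ over $T\times U$ is the same as a compatible family of sections over the pieces, so it suffices to treat $U=\Spec(A)$ affine. In that case $\cO(U)$ is a finitely generated $k$-algebra, and since $\cX\subset\bA^N\times C$ is cut out by finitely many polynomials $f_j(z_1,\dots,z_N)$ with coefficients in $\cO(U)$, a point of $\cX(T\times U)=\cX(R\otimes_k A)$ is an $N$-tuple of elements of $R\otimes_k A$ satisfying the equations $f_j=0$.

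The key step is to present the functor $R\mapsto (R\otimes_k A)^N$ as an ind-scheme. Choosing a presentation $A=k[y_1,\dots,y_m]/J$, one has $R\otimes_k A=R[y_1,\dots,y_m]/(J)$; filtering by degree in the $y_i$, the piece of sections whose representatives have degree $\le d$ is represented by an affine space over $R$ of rank $N\binom{m+d}{m}$, and the transition maps (inclusion into the next degree) are closed immersions once one fixes a choice of representatives — here one must quotient out by the relations coming from $J$, which is handled exactly as the ``infinite set of polynomial equations'' in the proof of Proposition \ref{LG}, since $J$ is finitely generated. Imposing the further polynomial conditions $f_j=0$ is again a closed condition level by level, so $L^{out}_U\cX=\varinjlim_d Z_d$ is an ind-scheme. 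For the passage to the limit over all $U$, the transition maps for $U'\subset U$ are the restriction maps $\cX(T\times U)\to \cX(T\times U')$; since $\cO(U)\hookrightarrow\cO(U')$ and everything in sight is functorial, these are morphisms of ind-schemes, and $L^{out}_{k(C)}\cX:=\varinjlim_{U}L^{out}_U\cX$ is a filtered colimit of ind-schemes over the cofiltered poset of nonempty opens, hence again an ind-scheme. (One can also note $\cO(k(C))=k(C)$ is the colimit of the $\cO(U)$, so a section over $T\times \Spec k(C)$ is an $N$-tuple over $R\otimes_k k(C)$ satisfying the equations, and the same degree-filtration argument applies directly, this time over the non-finitely-generated but still well-behaved algebra $k(C)$.)

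The main obstacle I anticipate is bookkeeping rather than anything deep: making sure the degree filtration on $R\otimes_k A$ is compatible with the ideal $J$ so that the transition maps really are closed immersions of schemes and not merely monomorphisms of functors, and checking that these filtrations glue across a finite affine cover of $U$ (different trivializations of the ambient $\bA^N\times C$ and different affine charts give different filtrations, which must be reconciled on overlaps — this is the analogue of the comparison of local parameters in the proof of Proposition \ref{LG}). Once the affine case is settled with an honest filtration by closed subschemes, gluing over the cover of $U$ and then taking the colimit over $U$ are formal.
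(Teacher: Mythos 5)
Your argument is essentially the paper's: exhaust the coordinate ring by finite-dimensional $k$-subspaces so that $L^{out}_{U}\bA^N_C$ becomes a filtered union of finite-dimensional affine spaces with linear closed immersions as transition maps, and then impose the finitely many defining equations of $\cX$ as closed conditions level by level after expanding their values in a basis of a larger finite-dimensional piece. The paper streamlines your bookkeeping by taking an arbitrary exhaustion $\cO_C(U)=\bigcup_n V_n$ (and likewise $k(C)=\bigcup_n V_n$) rather than a presentation-plus-degree filtration, so the quotient-by-$J$ issue disappears (the degree-$\le d$ piece is just $R\otimes_k W_d$ for the finite-dimensional image $W_d$ of $k[y]_{\le d}$ in $A$), the case of $L^{out}_{k(C)}\cX$ is handled by the identical argument, and no gluing over an affine cover is needed since every proper open $U\subsetneq C$ is already affine while $U=C$ is the easy case.
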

\begin{proof}
Let us first prove the lemma for the case $\cX=\bA^N_C$. Here the functor is given as $L^{out}_{U}\cX(T)=\bA^N(T\times U)$. Write $\cO_{C}(U)=\cup_n V_n$ as a union of finite dimensional $k$-vector spaces $V_n$. To give a map $T\times U\to \bA^N$ is the same as to give $N$ functions on $T\times U$, i.e., a finite $\cO_T(T)$-linear combination of elements in some $V_n$. This means that $L^{out}_{U}\cX$ is an inductive limit of affine spaces. The same argument holds for $L^{out}_{k(C)}\cX$.

Next, assume that $\cX\subset C\times \bA^N$ is a closed subscheme. Then $L^{out}_{U}\cX$ is a subfunctor of $L^{out}_{U}\bA_C^N$. To find equations for this subfunctor assume that $U\subsetneq C$ (otherwise the result is easy). Then $\cX|_{U}$ is defined by an ideal $I\subset H^0(\bA^n\times U,\cO_{\bA^n\times U})$ and we can choose generators $I=(f_1,\dots,f_m)$. An element $L^{out}_{U}\bA^N_C(T)$ defines an element of $L^{out}_{U}\cX(T)$ if and only if all the $f_i$ are mapped to $0$ in $\cO_T(T)\tensor \cO_C(U)$.
Writing $f_i$ as polynomials in the coordinates of $\bA^N_U$ the image in $\cO_T(T)\tensor \cO_C(U)$ can be expanded as an element in $\cO_T(T)\tensor V_n$ for large enough $n$, so choosing a basis of $V_n$ we obtain equations for the functor $L^{out}_{U}\cX$.
\end{proof}

\begin{proof}[Proof of Theorem \ref{PicMG}] (See \cite{LaszloSorger} for the case of constant groups.) 
First note that for any finite set of points $\{x_i\} \subset  C$ we have $\Pic(\prod_i \Gr_{\cG,x_i})=\prod_i \Pic(\Gr_{\cG,x_i})$, by the  see-saw principle, which we may apply since $\Gr_{\cG,x}$ is ind-proper and $\Pic(\Gr_{\cG,x})$ is discrete. Further, the product $\prod_i \Gr_{\cG,x_i}$ classifies $\cG$-torsors trivialized on $C-\{x_i\}$ and by the uniformization theorem we know that the $\Bun_\cG$ is the quotient of $\prod_i \Gr_{\cG,x_i}$ by the group $L^{out}_{C-\{x_i\}_i}\cG$, considered as a sheaf in the flat topology.

Now given $\cL\in \Pic(\Bun_\cG)$ we may tensor $\cL$ with a multiple fixed line bundle of minimal central charge and then modify it by a line bundle in the image of $\prod_{x\in C} X^*(\cG_x)$ such that the inverse image of $\cL$ to $\prod \Gr_{\cG,x_i}$ is trivial for any finite set of points $x_i\subset C$. Taking the limit over all points of $C$, we find that $\cL$ is defined by a character of the group $L^{out}_{k(C)}\cG$. Now, since $k$ is algebraically closed, $\cG_{k(C)}$ is quasi-split and simply connected. In particular $\cG(k(C))$ is generated by unipotent root subgroups $U_a(k(C))$. Given the structure of the $U_a$ (they are products of additive groups, or subgroups thereof, \cite{BT2}, 4.1) any point $u\in U_a(k(C))$ defines a morphism $\bG_a \to L^{out}_{k(C)}U_a$ considered as ind-schemes over $k$. Since $\bG_a$ does not have characters, the character of $L^{out}_{k(C)}$ must therefore be trivial on geometric points. This implies that the character is trivial on $L^{out}_{k(C)}\cG(T)$ for all reduced schemes $T$. Since we have seen in Theorem \ref{EinfachzusammenhaengendeUniformisierung} that there exists a smooth atlas $X\to \Bun_G$ which lifts to $\prod_i \Gr_{\cG,x_i}$ this is sufficient to show that $\cL$ must be trivial.
\end{proof}

\section{Existence of generic Borel subgroups}

In this final section we want to generalize the result of Drinfeld and Simpson on the existence of generic reductions to Borel subgroups. In the case of constant group schemes this was used to reduce to problems for tori e.g., Faltings used this result to construct line bundles of central charge one on $\Bun_G$ (see \cite{Faltings_Loopgroups}). As a first application in the more general setting of Bruhat-Tits group schemes this gives an alternative approach to the uniformization theorem.

In this section we allow non simply connected groups, however we need the assumption that $\cG$ splits over a tamely ramified extension.

Since some of the fibres of $\cG$ may not be semisimple, we begin by studying the possible extensions of Borel subgroups defined over the generic point of $C$ to these fibres. For us the following lemma, which we will prove by using arguments of \cite{BT2}, will suffice:
\begin{lemma}\label{ClosureBorel}
Assume that $\cG_{K_x}$ splits over a tamely ramified extension of $K_x$.
Denote by $\cT_0\subset \cG_{K_x}$ the maximal torus used in the definition of the Bruhat-Tits group scheme $\cG_{K_x}$ and let $\cB_{K_x} \subset \cG_{K_x}$ be a Borel subgroup with unipotent radical $\cU_{K_x}$.

Denote by $\cU\subset \cB \subset \cG$ be the closures of $\cU_{K_x},\cB_{K_x}$ in $\cG$. Then $\cB$ is smooth and $\cB/\cU\cong \cT_0$.
\end{lemma}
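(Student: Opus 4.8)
The statement is local, so I would work over $R = \widehat{\cO}_{C,x}$ with fraction field $K = K_x$, and try to construct $\cB$ and $\cU$ as smooth $R$-group schemes by imitating the Bruhat–Tits construction of $\cG_R$ itself. The point is that $\cG_R$ was built (as recalled just before Claim \ref{Zfin}) by first taking the connected N\'eron model $\cT_0$ of the torus $\cT_{0,K}$ and then attaching, according to the chosen facet in the apartment, the \emph{canonical smooth $R$-models $\cU_{a}$ of the root groups} $U_{a,K}$. Since $\cB_{K}$ is a Borel containing (a conjugate of) $\cT_{0,K}$ — and after conjugating by an element of $\cG(K)$, which does not change closures up to isomorphism, I may assume $\cT_{0,K}\subset\cB_K$ — its unipotent radical $\cU_K$ is the product of the root groups $U_{a,K}$ for $a$ in the set $\Phi^+$ of positive roots determined by $\cB_K$. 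I would then \emph{define} $\cU$ to be the scheme obtained by multiplying together, in a fixed order, the same $R$-models $\cU_a$ used in the construction of $\cG_R$, for $a\in\Phi^+$, and $\cB := \cT_0\cdot\cU$.

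The key steps, in order, are: (1) reduce to $\cG_K$ quasi-split (automatic after passing to $\kbar$, cf.\ the Steinberg/Borel–Springer discussion) and, using the tame-splitting hypothesis, invoke the relevant results of \cite{BT2} (section 4.1, and 4.6 for the \'etale-schematic-root-datum / \'epinglage formalism) guaranteeing that the root-group models $\cU_a$ and the big-cell product map behave well — in the tame case the $\cU_a$ are honest affine spaces or Weil restrictions thereof; (2) show that the ordered product map $\prod_{a\in\Phi^+}\cU_a\to\cG_R$ is a closed immersion with smooth image, so that $\cU$ is a smooth affine $R$-group scheme, flat over $R$, with unipotent fibres; (3) identify the generic fibre $\cU_K$ with the unipotent radical of $\cB_K$ (this is exactly the Bruhat-cell description over $K$), so that $\cU$ is \emph{a} smooth closed $R$-subgroup with generic fibre $\cU_K$; (4) upgrade this to \emph{the} scheme-theoretic closure: since $\cU$ is $R$-flat and its generic fibre is $\cU_K$, and the closure $\overline{\cU_K}$ is the smallest flat closed subscheme through $\cU_K$, one gets a closed immersion $\overline{\cU_K}\hookrightarrow\cU$ which is an isomorphism on the generic fibre; flatness of both sides plus the fact that the special fibre of $\cU$ is geometrically integral of the expected dimension forces equality (a Hilbert-polynomial / dimension count, as all fibres of an $R$-flat family over a DVR have the same dimension). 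The same argument applied to $\cB=\cT_0\cdot\cU$ gives $\cB=\overline{\cB_K}$, and then $\cB/\cU\cong\cT_0$ is immediate from the semidirect-product structure $\cB = \cT_0\ltimes\cU$, which holds over $R$ because $\cT_0$ normalizes each $\cU_a$ (the $R$-models are constructed $\cT_0$-equivariantly) and $\cT_0\cap\cU$ is trivial (it is trivial generically and both are $R$-flat).

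**Main obstacle.** The delicate point is step (2) together with the closure identification in step (4): showing that the \emph{ordered product of the $R$-models of the root groups is itself a smooth closed subscheme of $\cG_R$ whose special fibre has the right dimension}, rather than something larger or non-reduced. Over $K$ this is the open big cell, but over $R$ the product map can a priori fail to be an immersion, and the special fibre of a naive closure can jump in dimension or pick up nilpotents — this is precisely why the tame-ramification hypothesis is needed, since it is what makes the $\cU_a$ well-behaved (smooth affine spaces, no wild phenomena) and makes the product decomposition in \cite{BT2} valid integrally. I would handle this by quoting the relevant integral big-cell / product-decomposition statements from \cite{BT2} (4.1.2, 4.6) for the parahoric $\cG_R$, restricted to the positive roots, rather than reproving them; the only new input is the observation that the sub-collection $\{\cU_a : a\in\Phi^+\}$, being closed under the relevant conditions, again yields a smooth subgroup, which follows from the same commutator-relation formalism.

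**Consistency check for the special fibre.** Finally I would record why $\cB$ is \emph{smooth} rather than merely flat: $\cB\to\Spec R$ is flat with generic fibre smooth of dimension $\dim\cT_0+|\Phi^+|$, and its special fibre, being $\cT_{0,\kappa}\ltimes\prod_a(\cU_a)_\kappa$ with each $(\cU_a)_\kappa$ an affine space of the expected dimension, is smooth of the same dimension; flatness plus fibrewise smoothness gives smoothness of $\cB$ over $R$, and then $\cB/\cU$ is a smooth affine $R$-group scheme of relative dimension $\dim\cT_0$, trivial generically over $\cT_{0,K}$, hence — being the N\'eron model built from the same torus data — equal to $\cT_0$.
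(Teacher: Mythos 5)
There is a genuine gap, and it occurs at the very first move: the claim that ``after conjugating by an element of $\cG(K)$, which does not change closures up to isomorphism, I may assume $\cT_{0,K}\subset\cB_K$.'' Conjugation by $g\in\cG(K)$ is an automorphism of the generic fibre only; it does not extend to an automorphism of the integral model $\cG$ over $\widehat{\cO}_{C,x}$ unless $g$ normalizes the parahoric (equivalently, fixes the corresponding facet). Hence taking schematic closure in $\cG$ does not commute with this conjugation, and the closure of $g^{-1}\cB_K g$ is in general not isomorphic to the closure of $\cB_K$. This reduction assumes away exactly the difficulty of the lemma: for a Borel subgroup containing the torus $\cT_0$ used in the Bruhat--Tits construction, the closure is indeed $\cT_0\ltimes\prod_{a\in\Phi^+}\cU_a$ and smoothness is immediate from the construction of $\cG$ (which is essentially what your remaining steps (2)--(4) verify; note also that step (4) needs no dimension count, since any $R$-flat closed subscheme of $\cG$ with generic fibre $\cU_K$ automatically equals the schematic closure). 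The content of the lemma is the case of a Borel in arbitrary position relative to the facet, and your argument does not treat it.

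For comparison, the paper's proof (after the same reductions to the simply connected, split case, using tameness via \cite{Edixhoven_Neron}) handles the positioning problem as follows: it takes the closure $B^\prime$ of $\cB_\eta$ inside the \emph{Chevalley model} $G$ over $\cO_x$, where projectivity of $G/B$ and conjugacy of Borels give $B^\prime=g^{-1}Bg$ with $g\in G(\cO_x)$, so $B^\prime$ is a genuine smooth Borel subgroup scheme; it then conjugates the maximal torus $T^g\subset B^\prime$ by a lift $b\in B^\prime(\cO_x)$ of a suitable point of the special fibre so that the resulting split torus $T^\prime\subset B^\prime$ has special fibre inside the parabolic $P_{A,x}$ attached to the facet, whence $T^\prime(\cO_x)\subset\cG(\cO_x)$ and $T^\prime\subset\cG$; smoothness of $\cB$ then follows because $T^\prime$ and the closures of the root groups of $\cB_\eta$ are smooth subschemes of $\cG$. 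If you want to salvage your approach, you must replace the illegitimate $\cG(K)$-conjugation by an argument of this kind that produces, inside the closure $\cB$, a maximal torus admitting a smooth model contained in the parahoric.
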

\begin{proof}
First, we may assume that the ground field is separably closed, since smoothness can be checked over $\overline{k}$.

{\em Reduction to the case of simply connected groups.} Let $\tilde{\cG}_{K_x}\to \cG_{K_x}$ be the simply connected covering of $\cG$. As noted in Section 4, the Bruhat-Tits buildings of $\cG$ and $\cG^\prime$ are canonically isomorphic (\cite{Landvogt}, 2.1.7). In particular this isomorphism defines the simply connected cover $\tilde{\cG}\to \cG$, an extension $\cZ \to \tilde{\cT}_0 \to \cT_0$ and $\tilde{\cB} \to \cB$. Since $\cT_0$ and $\tilde{\cT}_0$ are smooth and the unipotent radicals of $\cB$ and $\tilde{\cB}$ are isomorphic it is sufficient to prove the theorem for the simply connected group $\tilde{\cG}$.

{\em Reduction to the case of split groups.} Again, we can use the reduction given in \cite{Pappas-Rapoport} Section 7 and 8.e.2: $\cG_{K_x}\cong \prod \text{Res}_{K_i/K_x} \cG_i$ is a product where $K_i/K_x$ are tame extensions and $\cG_i$ is absolutely simple and simply connected. Thus we may assume that $K_x=K_i$ and $\cG_{K_x}=\cG_i$ is absolutely simple. In this case there is a tamely ramified extension $L/K_x$ with ring of integers $\cO_L$ such that $\cG_{\cO_x} \cong (\text{Res}_{\cO_L/\cO_x} G_{\cO_L})^\sigma$, where $G_{\cO_L}$ is a parahoric subgroup of the split Chevalley group scheme of the type given by $\cG_{K_x}$ and $\sigma$ is an automorphism of $\text{Res}_{\cO_L/\cO_x}G_{\cO_L}$. Since taking invariants preserves smoothness if the extension is tame (\cite{Edixhoven_Neron}), we may assume that $\cG_{K_x}=G_L$ is a split group.

Thus we are reduced to the following situation:
Denote by $G$ the Chevalley group scheme over $\cO_x$ with generic fiber $G_{K_x}$ and let $T\subset G$ be a split maximal torus and let $X^*(T)$ be its character group. We know that $\cG_{\cO_x}$ is a parahoric group scheme corresponding to a facet $A\subset X^*(T)$. Furthermore we are given a Borel subgroup $\cB_\eta \subset \cG_{K_x}=G_{K_x}$ and we want to show that the closure $\cB\subset \cG_{\cO_x}$ of $\cB_\eta$ is smooth.

Now let $P_A\subset G$ be the parabolic subgroup defined by $A$, let $B\subset P_A$ be a Borel subgroup and let $B^\prime\subset G$ be the closure of $\cB_\eta$ in $G$. Since all Borel subgroups are conjugate and $G/B$ is projective over $\cO_x$ we see that $B^\prime\subset G$ is of the form $B^\prime= g^{-1}Bg$ for some $g\in G(\cO)$. Denote by $T^g:=g^{-1}T g\subset B^\prime$ the corresponding maximal torus.

Denote the special fibres of $P_A$ and $B^\prime$ by $P_{A,x}$ and $B^\prime_x$. There exist a split maximal torus $T^\prime_x\subset B_x^\prime\cap P_{A,x}$. Since all tori are conjugate this implies that $T^\prime_x=b_x^{-1} T^g b_x$ for some $b_x\in B^\prime_x(k)$. Choose a lift $b\in B^\prime(\cO_x)$ of $b_x$. Then $T^\prime:=b^{-1}T^gb\subset B^\prime \subset G$ is a split maximal torus in $G$ the special fiber of which lies in $P_{A,x}$, in particular $T^\prime(\cO_x)\subset \cG(\cO_x)$ and therefore $T^\prime\subset \cG$. Thus $T^\prime \subset \cB$ and since the unipotent radical of $\cB_\eta$ also has a smooth extension to $\cG$ we see that $\cB$ is smooth.
\end{proof}
\begin{definition}
We will say that $\cB\subset \cG$ is a {\em Borel subgroup} of $\cG$ if $\cB$ is the closure of a Borel subgroup of the generic fiber of $\cG$.
\end{definition}
\begin{lemma}\label{Punktweise Reduktion auf Borel}
Assume that $k$ is algebraically closed. Let $\cB\subset \cG$ be a Borel subgroup and $\cP$ a $\cG$-torsor on $C$. Then there exists a reduction of $\cP$ to $\cB$.
\end{lemma}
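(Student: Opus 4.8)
The plan is to deduce the lemma from the uniformization theorem (Theorem~\ref{DS}), which is already available, by puncturing $C$ at a point \emph{outside} $\Ram(\cG)$, where $\cG/\cB$ is genuinely proper over the base so that a generic section automatically extends.

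First I would fix a closed point $x\in C$ with $x\notin\Ram(\cG)$; this is possible since $C$ is a positive-dimensional curve over an algebraically closed field, hence has infinitely many closed points, while $\Ram(\cG)$ is finite. By Theorem~\ref{DS} there is an fppf covering $S'\to\Spec k$ with $\cP|_{(C-x)\times S'}$ trivial; since $k$ is algebraically closed such a covering admits a $k$-point, and pulling back along the corresponding section shows that already $\cP|_{C-x}\cong\cG|_{C-x}$. The inclusion $\cB|_{C-x}\hookrightarrow\cG|_{C-x}$ is then a reduction of $\cP|_{C-x}$ to $\cB$; transporting it through this isomorphism, we obtain a section $\sigma$ of $\cP\times^\cG(\cG/\cB)\to C$ over the affine curve $C-x$.

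It remains to extend $\sigma$ across $x$, and this is a local question on the discrete valuation ring $\cO:=\cO_{C,x}$. Since $x\notin\Ram(\cG)$, the base change $\cG_\cO$ is a semisimple group scheme over $\cO$, so by properness of the scheme of Borel subgroups of a reductive group scheme the Borel $\cB_{k(C)}$ of the generic fibre extends uniquely to a Borel subgroup scheme over $\cO$, which must coincide with its closure $\cB_\cO$. Hence $(\cG/\cB)_\cO$ is the flag bundle $\mathrm{Bor}(\cG_\cO)$, projective over $\cO$, and its twist $\cP\times^\cG(\cG/\cB)$ is proper over $\cO$ because properness descends along the faithfully flat morphism $\cP\to C$. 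Now $\sigma$ restricts to a section over the generic point $\Spec k(C)$ of $\Spec\cO$, and the valuative criterion of properness extends this to a section over $\Spec\cO$; spreading out to a neighbourhood of $x$ and using separatedness to identify it with $\sigma$ on the overlap, the two glue to a global section of $\cP\times^\cG(\cG/\cB)\to C$, i.e.\ to the desired reduction of $\cP$ to $\cB$.

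The only ingredient carrying real content is Theorem~\ref{DS}; granted it, there is essentially no obstacle, the point being precisely to move the puncture away from $\Ram(\cG)$, where $\cG/\cB$ fails to be proper. By contrast, if one started from the Steinberg--Borel--Springer generic reduction, the same properness argument would extend it over all of $C-\Ram(\cG)$, but at the ramified points the reduction could fail to extend and would have to be modified by hand --- which is exactly the difficulty that routing through the already-proven uniformization theorem lets us sidestep.
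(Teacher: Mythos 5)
Your proof is correct and follows essentially the same route as the paper: choose $x\notin\Ram(\cG)$, trivialize $\cP$ on $C-x$ via the uniformization theorem to get a section of $\cP/\cB$ there, and extend it across $x$ using that $\cP/\cB$ is proper over $C-\Ram(\cG)$. The paper's proof is just a terser version of your argument (your local analysis at $\cO_{C,x}$ and the closing remark about the ramified points match the paper's own justification and subsequent remark).
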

\begin{proof}
Choose a point $x\in C-\Ram(\cG)$. By the uniformization theorem (Theorem \ref{Uniformisierung}) the restriction of $\cP$ to $C-\{x\}$ is trivial, in particular there exist a section $\mathring{s}$ of $\cP/\cB|_{C-\{x\}}$.  Now, since $\cG|_{C-\Ram(\cG)}$ is semisimple the quotient $\cP/\cB|_{C-\Ram(\cG)}$ is projective. Therefore the section $\mathring{s}$ extends to a section $s$ of $\cP/\cB$.  This proves the lemma.
\end{proof}
\begin{remark}
The only problem in the above construction stems from the fact that $\cG/\cB$ is  non-compact if $\cG_x$ is not semi-simple, and therefore sections of $\cP_\eta/\cB_\eta$ need not  extend to $\cP/\cB$. However, if one replaces $\cG$ by the inner form $\Aut_\cG(\cP)$ over $C$, one can apply 
Lemma \ref{ClosureBorel} to find that any reduction over $\cP_\eta$ extends to a Borel subgroup of $\Aut_\cG(\cP)$.

Note further that we used the uniformization theorem in the above proof. However, it would be sufficient to use the weaker statement, that there is an open subset containing the finitely many points $x\in \Ram(\cG)$ on which $\cP$ can be trivialized, which is easier to prove.
\end{remark}

The argument for the proof of the following proposition is a simple special case of the argument given by de Jong and Starr to produce sections of rationally connected fibrations (\cite{deJongStarr}):
\begin{proposition}\label{Deformation B}
Assume that $C$ and $\cG$ are defined over a field $k$. Let $\cB\subset \cG$ be a Borel subgroup, $\cP$ a $\cG$-torsor on $C$ and $s\in \cP/\cB$ a reduction of $\cP$ to $\cB$. Denote by $\cP_\cB$ the corresponding $\cB$-torsor on $C$.
\begin{enumerate}
\item If $H^1(C,\cP_{\cB}\times^{\cG} \Lie(\cG)/\Lie(\cB))=0$ then the map $\text{\rm ind}_\cB^\cG\colon \Bun_\cB \to \Bun_\cG$ which maps a $\cB$-torsor to the induced $\cG$-torsor is smooth in $\cP_\cB$.
\item If $s,\cB$ are given, then there exists another reduction $s^\prime\in \cP/\cB(k)$ such that (1) holds for $s^\prime$.\label{frei}
\end{enumerate}
\end{proposition}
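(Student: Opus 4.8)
\emph{Part (1).} The plan is to argue deformation-theoretically. Since $\cB$ is smooth over $C$ by Lemma~\ref{ClosureBorel} and $\cB\subset\cG$ is a closed subgroup scheme, hence affine and of finite type over $C$, the argument of Proposition~\ref{BunG} applies verbatim to $\Bun_\cB$: it is a smooth algebraic stack, locally of finite type, with tangent complex $R\Gamma(C,\cQ\times^\cB\Lie(\cB))[1]$ at a $\cB$-torsor $\cQ$. The tangent complex of $\Bun_\cG$ at $\cP=\cP_\cB\times^\cB\cG$ is $R\Gamma(C,\cP_\cB\times^\cB\Lie(\cG))[1]$, and the derivative of $\mathrm{ind}_\cB^\cG$ at $\cP_\cB$ is induced by $\Lie(\cB)\hookrightarrow\Lie(\cG)$. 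Twisting the exact sequence of $\cB$-modules $0\to\Lie(\cB)\to\Lie(\cG)\to\Lie(\cG)/\Lie(\cB)\to 0$ by $\cP_\cB$ gives an exact sequence of vector bundles on $C$, so the cone of this derivative is $R\Gamma(C,\cP_\cB\times^\cB(\Lie(\cG)/\Lie(\cB)))[1]$, a shift of the relative tangent complex, whose only cohomology sheaves are, up to shift, $H^0$ and $H^1$ of $\cP_\cB\times^\cB(\Lie(\cG)/\Lie(\cB))$, the latter being the relative obstruction space at $\cP_\cB$. Concretely: for a square-zero extension $A'\twoheadrightarrow A$ with kernel $I$ a finite-dimensional $k$-vector space, the obstruction to extending a $\cB$-torsor on $C_A$ reducing to $\cP_\cB$, compatibly with a prescribed extension to $C_{A'}$ of its induced $\cG$-torsor, lies in $H^1(C,\cP_\cB\times^\cB(\Lie(\cG)/\Lie(\cB)))\otimes_k I$. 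When this group vanishes -- which is hypothesis (1) -- $\mathrm{ind}_\cB^\cG$ is unobstructed at $\cP_\cB$, and being locally of finite presentation it is therefore smooth at $\cP_\cB$.

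\emph{Part (2).} By (1) it suffices to produce a reduction whose obstruction group vanishes. Writing $\pi\colon\cP/\cB\to C$ for the structure map, a reduction of $\cP$ to $\cB$ is the same as a section $s$ of $\pi$, and the bundle $\cP_\cB\times^\cB(\Lie(\cG)/\Lie(\cB))$ of (1) is canonically $s^{*}T_{\cP/\cB/C}$; so I must produce a section $s'$ with $H^1(C,s'^{*}T_{\cP/\cB/C})=0$. Here $\pi$ is smooth (as $\cB$ is smooth) and its generic fibre $\cG_\eta/\cB_\eta$ is a flag variety, hence separably rationally connected. After choosing a $C$-embedding of $\cP/\cB$ as a dense open subscheme of a scheme $\overline{\cP/\cB}$ projective over $C$, the given section $s$ lands in the smooth locus and one is set up to run the (first, easy step of the) argument of de Jong--Starr \cite{deJongStarr}: pick general points $c_1,\dots,c_m$ of $C-\Ram(\cG)$, over which $\pi$ is proper with fibre a flag variety, attach to $s(C)$ a very free vertical rational curve through $s(c_i)$ inside $\pi^{-1}(c_i)$ for each $i$, and form the comb $u\colon D=C\cup\bigcup_i\bP^1\to\cP/\cB$ over $C$.

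For $m$ large and the teeth chosen generically, the standard comb estimate gives $H^1(D,u^{*}T_{\cP/\cB/C})=0$: the very free teeth contribute a positive twist that kills the $H^1$ of the original section once enough of them are attached at general points of $s(C)$. Hence $[u]$ is an unobstructed point of the scheme of maps to $\overline{\cP/\cB}$ over $C$ of the relevant numerical type, and smoothing the nodes of $D$ yields, as a general nearby deformation, a morphism from a smooth curve of arithmetic genus $g(C)$ that is of degree one over $C$, that is, a section $s'\colon C\to\overline{\cP/\cB}$; upper semicontinuity of $h^1$ together with $H^1(D,u^{*}T_{\cP/\cB/C})=0$ forces $H^1(C,s'^{*}T_{\cP/\cB/C})=0$. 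Since the image of $u$ lies in the open subscheme $\cP/\cB$ and landing in a fixed open subscheme is an open condition on maps, a general such $s'$ is an honest reduction of $\cP$ to $\cB$; this is the desired $s'$.

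The step I expect to cause the most trouble is precisely this comb estimate together with the smoothing: one must verify that attaching enough very free vertical rational curves to the given reduction produces a comb with vanishing $H^1$, and that the general smoothing remains a section lying inside the non-proper scheme $\cP/\cB$ rather than meeting the boundary of the compactification -- and that everything stays defined over $k$, not just over $\overline{k}$. This is exactly what the simple case of the de Jong--Starr machinery supplies. (A more computational alternative that avoids stable maps: modify the $\cB$-reduction at a point of $C-\Ram(\cG)$ by a sufficiently anti-dominant cocharacter of $\cT$ so that every line bundle occurring in the associated graded -- with respect to the unipotent radical -- of $\cP'_\cB\times^\cB(\Lie(\cG)/\Lie(\cB))$ has degree $\ge 2g-1$, which also gives $H^1=0$, at the cost of some bookkeeping with the possibly non-split torus $\cT$.)
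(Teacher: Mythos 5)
Your proposal is correct and follows essentially the same route as the paper: part (1) via the twisted exact sequence $0\to\Lie(\cB)\to\Lie(\cG)\to\Lie(\cG)/\Lie(\cB)\to 0$ and its cohomology, and part (2) by identifying $\cP_\cB\times^\cB\Lie(\cG)/\Lie(\cB)$ with the normal bundle of the section and smoothing a comb of very free vertical rational curves in the flag-variety fibres over $C-\Ram(\cG)$, exactly as in the paper's appeal to Koll\'ar II.7.9 / de Jong--Starr. The only (harmless) divergence is how the non-properness of $\cP/\cB$ over $\Ram(\cG)$ is handled: the paper smooths the comb keeping the points $s(x_i)$, $x_i\in\Ram(\cG)$, fixed so the deformed section automatically stays in $\cP/\cB$, whereas you compactify relatively and use that avoiding the closed boundary is an open condition on maps from the proper curve $C$ — both mechanisms are valid.
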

\begin{corollary}[Reduction to generic Borel subgroups]\label{Reduktion_auf_Borel}
Let $\cG$ be a parahoric Bruhat-Tits group scheme over $C$ such that the generic fiber of $\cG$ is semisimple and quasi-split and let $\cB\subset \cG$ be a Borel subgroup. Then for any locally noetherian scheme $S$ and every family $\cP \in \Bun_\cG(S)$ there exists a smooth covering $S^\prime \to S$ such that $\cP|C\times S^\prime$ has a reduction to $\cB$.
\end{corollary}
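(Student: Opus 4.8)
The plan is to realize the desired $\cB$-reduction as a lift along the induction morphism $\mathrm{ind}_\cB^\cG\colon\Bun_\cB\to\Bun_\cG$, after restricting the source to the open locus on which this morphism is smooth, and then to pull that picture back along the given family. First I would record that $\cB\subset\cG$ is a smooth affine group scheme of finite type over $C$: away from $\Ram(\cG)$ this is clear since $\cG$ is semisimple there and $\cB$ restricts to an ordinary Borel subgroup, while at the points of $\Ram(\cG)$ it is exactly Lemma \ref{ClosureBorel}, whose tame-splitting hypothesis is among the assumptions of the corollary. Hence Proposition \ref{BunG} applies and $\Bun_\cB$ is a smooth algebraic stack, locally of finite type, and $Q\mapsto Q\times^\cB\cG$ defines $\mathrm{ind}_\cB^\cG$. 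I would then introduce the substack $\Bun_\cB^\circ\subset\Bun_\cB$ of those $\cB$-torsors $\cP_\cB$ with $H^1(C,\cP_\cB\times^\cB(\Lie(\cG)/\Lie(\cB)))=0$; by semicontinuity of cohomology in families (applied to the vector bundle on $\Bun_\cB\times C$ attached to the universal $\cB$-torsor) this is open, and Proposition \ref{Deformation B}(1) then says that $\mathrm{ind}_\cB^\cG$ is smooth at every point of $\Bun_\cB^\circ$.

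The heart of the argument is to show that $\mathrm{ind}_\cB^\cG|_{\Bun_\cB^\circ}$ is surjective. Its image is open, being the image of a smooth morphism of algebraic stacks, so it suffices to hit every geometric point of $\Bun_\cG$; for this I may base change to an algebraically closed field $\bar K\supseteq k$, after checking that $\cG_{\bar K}$ remains a parahoric Bruhat--Tits group scheme over $C_{\bar K}$ with quasi-split semisimple generic fibre and that $\cB_{\bar K}$ is again the closure of a Borel subgroup of the generic fibre (here geometric connectedness of $C$ and of Borel subgroups is used). Given $\cP\in\Bun_\cG(\bar K)$, Lemma \ref{Punktweise Reduktion auf Borel} — which is where the uniformization theorem enters — produces a reduction $s$ of $\cP$ to $\cB_{\bar K}$, and Proposition \ref{Deformation B}(2) upgrades $s$ to a reduction $s'$ whose associated $\cB_{\bar K}$-torsor lies in $\Bun_\cB^\circ(\bar K)$ and still induces $\cP$. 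Thus the open image of $\mathrm{ind}_\cB^\cG|_{\Bun_\cB^\circ}$ contains all geometric points, hence equals $\Bun_\cG$.

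To conclude, let $S$ be locally noetherian and $\cP\in\Bun_\cG(S)$, i.e.\ a morphism $f\colon S\to\Bun_\cG$. I would form the fibre product $S\times_{\Bun_\cG}\Bun_\cB^\circ$, which is an algebraic stack smooth and surjective over $S$ (base change of a smooth surjection), choose a smooth atlas $S'\to S\times_{\Bun_\cG}\Bun_\cB^\circ$ with $S'$ a scheme, and observe that $S'\to S$ is then a smooth covering while the composite $S'\to\Bun_\cB^\circ\hookrightarrow\Bun_\cB$ equips $\cP|_{C\times S'}$ with a reduction to $\cB$.

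The main obstacle, beyond routine base-change and deformation-theoretic bookkeeping, is the surjectivity step: it is the only place where one must combine the global input (existence of a pointwise reduction, via the uniformization theorem through Lemma \ref{Punktweise Reduktion auf Borel}) with the deformation input (Proposition \ref{Deformation B}(2), which kills the obstruction space $H^1$ so that smoothness of the induction map becomes available). Once $\Bun_\cB$ is known to be a smooth algebraic stack and the open locus $\Bun_\cB^\circ$ is in place, everything else is formal.
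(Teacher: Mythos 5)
Your proposal is correct and is essentially the paper's own argument: the paper likewise combines the pointwise reduction of Lemma \ref{Punktweise Reduktion auf Borel} (via uniformization) with Proposition \ref{Deformation B} to make $\mathrm{ind}_\cB^\cG$ smooth at the chosen reduction, and then extends the reduction over a smooth neighbourhood of each point of $S$. Your version merely globalizes this bookkeeping by naming the open locus $\Bun_\cB^\circ$ and pulling back the smooth surjection $\Bun_\cB^\circ\to\Bun_\cG$ along $S\to\Bun_\cG$, which is the same construction phrased stack-theoretically.
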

\begin{proof}[Proof of Corollary \ref{Reduktion_auf_Borel}]
For every point $s\in S$ we can apply Corollary \ref{Punktweise Reduktion auf Borel} to obtain a  reduction $\cP_{s,\cB}$ of $\cP_s$ to $\cB$. By the preceding proposition we may further assume that the space of reductions of $\cP$ to $\cB_s$ is smooth in $\cP_{s,\cB_s}$ i.e., there is a smooth neighbourhood $S^\prime_s$ such that the reduction extends on this neighbourhood.
\end{proof}

\begin{proof}[Proof of Proposition \ref{Deformation B}]
The first part follows from the cohomology sequence:
$$ H^1(C,\cP_{\cB}\times^{\cB} \Lie{\cB}) \to H^1(C,\cP\times^{\cG}\Lie{\cG}) \to H^1(C,\cP_\cB\times^{\cB} \Lie(\cG)/\Lie(\cB))$$
and the fact that the first two groups classify infinitesimal deformations of the $\cB$- resp. $\cG$-bundle $\cP$.

To show the second part note that $\cP_\cB\times^{\cB}\Lie(\cG)/\Lie(\cB))$ is the normal bundle to the section $s$. In particular the morphism $\text{\rm ind}_\cB^\cG$ is smooth if the section $s$ is a very free curve, i.e. if the normal bundle has no higher cohomology. Now over all points $x\in C$ for which $\cG_x$ is semisimple, the fibres of $\cP/\cB \to C$ are flag varieties - in particular these contain very free rational curves and the smoothening argument for combs \cite{Kollar} II, Theorem 7.9 applies: Let $x_1,\dots x_n\in C$ be the set of points for which $\cG_x$ is not semi simple. There exist points $y_1,\dots,y_N\subset C$ and very free rational curves $P_i\subset (\cP/\cB)_{y_i}$ passing through $s(y_i)$. Consider $C^\prime:= s(C)\cup \bigcup_{i=1}^N P_i$. Then there exist a deformation $C^\pprime\subset \cP/\cB$ of $C^\prime$ such that $C^\pprime$ is smooth $C^\pprime_{x_i}=s(x_i)$ for all $i=1,\dots,n$ and such that the normal bundle of $C^\pprime$ has no higher cohomology. Since the degree of $C^\pprime$ over $C$ is still $1$ the curve $C^\pprime$ defines a new section $s^\prime$ which satisfies  condition (\ref{frei}).
\end{proof}

\begin{remark}
Using the strategy of Drinfeld and Simpson one can also use the above result to give a different proof of the uniformization theorem for groups splitting over a tamely ramified extension.
\end{remark}

\end{document}